\documentclass[preprint]{imsart}

\usepackage[utf8]{inputenc} 
\usepackage{amsmath,amsthm,verbatim,amssymb,amsfonts,amscd,graphicx}
\usepackage[colorlinks,citecolor=blue,urlcolor=blue]{hyperref}

\usepackage{mathtools}


\usepackage{geometry} 
\geometry{a4paper} 


\usepackage{enumitem}
\usepackage{mathabx} 

\usepackage{graphicx} 

\usepackage{booktabs} 
\usepackage{array} 
\usepackage{verbatim} 
\usepackage{subfig} 
\usepackage[utf8]{inputenc}
\usepackage[english]{babel}


\usepackage{sectsty}
\allsectionsfont{\sffamily\mdseries\upshape} 

\renewcommand{\exp}[1]{\operatorname{exp}\left(#1\right)} 
\providecommand{\argmin}{\mathop\mathrm{arg min}}


\providecommand{\tr}{\mathop\mathrm{tr}}

\providecommand{\sign}{\mathop\mathrm{sign}}

\newcommand\var{\mathrm{Var}}

\newcommand{\eps}{\varepsilon}

\newcommand{\wh}{\widehat}
\newcommand{\wt}{\widetilde}
\newcommand{\h}[1]{h^{(#1)}}

\usepackage[nottoc,notlof,notlot]{tocbibind} 
\usepackage{natbib}
\usepackage{bbm}
\usepackage[titles,subfigure]{tocloft} 
\usepackage{comment}


\newcommand{\p}{{\rm I}\kern-0.18em{\rm P}}
\newcommand{\1}{{\rm 1}\kern-0.24em{\rm I}}
\newcommand{\E}{{\rm I}\kern-0.18em{\rm E}}
\newcommand{\R}{{\rm I}\kern-0.18em{\rm R}}

\newtheorem{theorem}{Theorem}[section]
\newtheorem{lemma}{Lemma}
\newtheorem{corollary}{Corollary}[section]

\newtheorem{remark}{Remark}
\newtheorem{fact}{Fact}



\newcommand{\rom}[1]{\uppercase\expandafter{\romannumeral #1\relax}}
\newcommand{\be}[1]{\begin{equation*}#1\end{equation*}}
\newcommand{\ben}[1]{\begin{equation}#1\end{equation}}

\def\l{\left}
\def\r{\right}
\newcommand{\m}{\mathcal}
\newcommand{\mb}{\mathbb}

\newcommand{\med}[1]{\mbox{med}\left(#1\right)}

\newcommand{\pr}[1]{\mathbb{P}{\left(#1\right)}}

\newcommand{\pd}{\partial}

\def\al#1{\begin{align*}{#1}\end{align*}}
\def\aln#1{\begin{align}{#1}\end{align}}
\def\ml#1{\begin{multline*}{#1}\end{multline*}}
\def\mln#1{\begin{multline}{#1}\end{multline}}

\allowdisplaybreaks

\begin{document}

\begin{frontmatter}
\title{U-statistics of growing order and sub-Gaussian mean estimators with sharp constants}
\runauthor{ }
\runtitle{Robust ERM}

\begin{aug}
\author{\fnms{Stanislav} \snm{Minsker} \thanksref{t3}}
\address[]{Department of Mathematics, University of Southern California \\
email: \textcolor{blue}{minsker@usc.edu}
}

\thankstext{t3}{Author acknowledges support by the National Science Foundation grants CIF-1908905 and DMS CAREER-2045068.}
\end{aug}

\begin{abstract}
This paper addresses the following question: given a sample of i.i.d. random variables with finite variance, can one construct an estimator of the unknown mean that performs nearly as well as if the data were normally distributed? One of the most popular examples achieving this goal is the median of means estimator. However, it is inefficient in a sense that the constants in the resulting bounds are suboptimal. We show that a permutation-invariant modification of the median of means estimator admits deviation guarantees that are sharp up to $1+o(1)$ factor if the underlying distribution possesses more than $\frac{3+\sqrt{5}}{2}\approx 2.62$ moments and is absolutely continuous with respect to the Lebesgue measure. This result yields potential improvements for a variety of algorithms that rely on the median of means estimator as a building block. 
At the core of our argument is are the new deviation inequalities for the U-statistics of order that is allowed to grow with the sample size, a result that could be of independent interest. 
\end{abstract}

\begin{keyword}[class=MSC]
\kwd[Primary ]{62G35}
\kwd[; secondary ]{60E15,62E20}
\end{keyword}

\begin{keyword}
\kwd{U-statistics}
\kwd{median-of-means estimator}
\kwd{heavy tails}
\end{keyword}
\end{frontmatter}

\mathtoolsset{showonlyrefs}

\section{Introduction.}
\label{sec:intro}

Let $X_1,\ldots,X_N$ be i.i.d. random variables with distribution $P$ having mean $\mu$ and finite variance $\sigma^2$. At the core of this paper is the following question: given $1\leq t \leq t_{\max}(N)$, construct an estimator $\wt \mu_N = \wt \mu_N(X_1,\ldots,X_N)$ such that 
\ben{
\label{eq:sg}
\pr{\l| \wt\mu_N-\mu \r|\geq \sigma\sqrt{\frac t N}} \leq 2 e^{-\frac{t}{L}}
}
for some absolute positive constant $L$. Estimators that satisfy this deviation property are called sub-Gaussian. For example, the sample mean $\bar X_N = \frac{1}{N}\sum_{j=1}^N X_j$ is sub-Gaussian for $t_{\max} \asymp q(N,P)$ where $q(N,P)\to \infty$ as $N\to\infty$ and the constant $L$ equals $2$: this immediately follows from the fact that convergence of the distribution functions is uniform in the central limit theorem. However, $q(N,P)$ can grow arbitrarily slow in general, and it grows as $\log^{1/2}(N)$ if $\mb E|X|^{2+\eps}<\infty$ for some $\eps>0$ in view of the Berry-Esseen theorem \citep[for instance, see the book by][]{Petrov_1975}. At the same time, the so-called median of means (MOM) estimator, originally introduced by \citet[][]{Nemirovski1983Problem-complex00,alon1996space,jerrum1986random} and studied recently in relation to the problem at hand satisfies inequality \eqref{eq:sg} with $t_{\max}$ of order $N$ and $L = 24e$ \citep{lerasle2011robust}, although the latter can be improved. A large body of existing work used the MOM estimator as a core subroutine to relax underlying assumptions for a variety of statistical problems, in particular the methods based on the empirical risk minimization; we refer the reader to an excellent survey paper by \citet{lugosi2019mean} for a detailed overview of the recent advances.

The exact value of constant $L$ in inequality \eqref{eq:sg} is less important in problems where only the minimax rates are of interest, but it becomes crucial in terms of practical value and sample efficiency of the algorithms. The benchmark here is the situation when observations are normally distributed: \citet{catoni2012challenging} showed that no estimator can outperform the sample mean in this situation. The latter satisfies the relation 
\[
\pr{\l| \bar X_N - \mu \r|\geq \sigma\frac{\Phi^{-1}(1-e^{-t/2})}{\sqrt N}} = 2 e^{-\frac{t}{2}}
\]
where $\Phi^{-1}(\cdot)$ denotes the quantile function of the standard normal law. As $\Phi^{-1}(1-e^{-t/2})=(1+o(1))\sqrt{t}$ as $t\to\infty$, the best guarantee of the form \eqref{eq:sg} one can hope for is attained for $L = 2$. It is therefore natural to ask whether there exist sharp sub-Gaussian estimators of the mean, that is, estimators satisfying \eqref{eq:sg} with $L=2(1+o(1))$ where $o(1)$ is a sequence that converges to $0$ as $N\to\infty$, under minimal assumptions on the underlying distribution. This question has previously been posed by \citet{devroye2016sub} as an open problem, and several results appeared since then that give partial answers. We proceed with a brief review of the state of the art. 

\subsection{Overview of the existing results.}

\citet{catoni2012challenging} presented the first known example of a sharp sub-Gaussian estimator with $t_{\max} = o(N/\kappa)$ for distributions with finite fourth moment and a known upper bound on the kurtosis $\kappa$ (or, alternatively, for distribution with finite but known variance). \citet{devroye2016sub} introduced an alternative estimator that also required finite fourth moment but did not explicitly depend on the value of the kurtosis as an input while satisfying required guarantees for $t_{\max} = o\l( (N/\kappa)^{2/3}\r)$. \citet{minsker2021robust} designed an asymptotically efficient sub-Gaussian estimator $\wt \mu_N$ that satisfies $\sqrt{N}\l( \wt\mu_N - \mu\r)\xrightarrow{d} N(0,\sigma^2)$ assuming only the finite second moment plus a mild, ``small-ball'' type condition. However, the constants in the non-asymptotic version of their bounds were not sharp. 
Finally, \citet{lee2020optimal} constructed an estimator with required properties assuming just the finite second moment, however, their guarantees hold with optimal constants only for $t_{\min}\leq t\leq t_{\max}$ where $t_{\max} = o(N)$ and $t_{\min}\to\infty$ as $N\to\infty$. In particular, this range excludes $t$ in the neighborhood of $0$ which is often the region of most practical interest.

\subsection{Summary of the main contributions.}

The reasons for the popularity of MOM estimator are plenty: it is simple to define and to compute, it admits strong theoretical guarantees, moreover it is scale-invariant and therefore essentially tuning-free. 
Thus, we believe that any quantifiable improvements to its performance are worth investigating.  

We start by showing that the standard MOM estimator achieves bound \eqref{eq:sg} with $L=\pi(1+o(1))$ where $o(1)\to 0$ as $N\to\infty$; this fact is formally stated in Theorem \ref{th:dev-1}. We then define a permutation-invariant version of MOM, denoted $\wh\mu_N$, and show in Corollary \ref{th:clt-1} that, surprisingly, it is asymptotically optimal in a sense that $\sqrt{N}\l( \wh\mu_N - \mu\r)\xrightarrow{d} N(0,\sigma^2)$ under minimal assumptions; compare this to the the standard MOM estimator that has a limiting variance $\frac{\pi}{2}\sigma^2$. The main result of the paper, Theorem \ref{th:U-mom}, demonstrates that optimality of $\wh\mu_N$ holds in the stronger sense, namely, that inequality \eqref{eq:sg} is valid for a wide range the confidence parameters assuming the distribution of $X_1$ possesses $q$ moments for some possibly unknown $q>\frac{3+\sqrt 5}{2}\approx 2.62$ and that its characteristic function satisfies a mild decay bound. 

Analysis of the estimator $\wh\mu_N$ requires new inequalities for U-statistics of order that grows with the sample size. Detailed discussion and comparison with existing bounds is given in section \ref{sec:growing-order}. In particular, we prove novel bounds for large deviations of the degenerate, higher order terms of the Hoeffding decomposition (Theorem \ref{th:concentration}), and deduce sub-Gaussian deviation guarantees for the non-degenerate U-statistics (Corollary \ref{th:bernstein}) with the ``correct'' sub-Gaussian parameter. These bounds could be of independent interest.

\subsection{Notation.}
\label{section:def}

Unspecified absolute constants will be denoted $C,c,C_1,c'$, etc., and may take different values in different parts of the paper. Given $a,b\in \mb R$, we will write $a\wedge b$ for $\min(a,b)$ and $a\vee b$ for 
$\max(a,b)$. For a positive integer $M$, $[M]$ denotes the set $\{1,\ldots,M\}$. 

We will frequently use the standard big-O and small-o notation for asymptotic relations between functions and sequences. Moreover, given two sequences $\{a_n\}_{n\geq 1}$ and $\{b_n\}_{n\geq 1}$ where $b_n\ne 0$ for all $n$, we will write that $a_n\ll b_n$ if $\frac{a_n}{b_n}=o(1)$ as $n\to\infty$. Note that $o(1)$ may denote different functions/sequences from line to line. 

For a function $f:\mb R\mapsto\mb R$, $f^{(m)}$ will denote its $m$-th derivative whenever it exists. Similarly, given $g:\mb R^d\mapsto\mb R$, $\partial_{x_j} g(x_1,\ldots,x_d)$ will stand for the partial derivative of $g$ with respect to the $j$-th variable. Finally, the sup-norm of $g$ is defined via $\|g\|_\infty:=\mathrm{ess \,sup}\{ |g(y)|: \, y\in \mb R^d\}$ and the convolution of $f$ and $g$ is denoted $f\ast g$. 

Given i.i.d. random variables $X_1,\ldots,X_N$ distributed according to $P$, $P_N:=\frac{1}{N}\sum_{j=1}^N \delta_{X_j}$ will stand for the associated empirical measure, where $\delta_{X}(f):=f(X)$. 
For a real-valued function $f$ and a signed measure $Q$, we will write $Qf$ for $\int f dQ$, assuming that the last integral is well-defined. Additional notation and auxiliary results will be introduced on demand.

\section{Optimal constants for the median of means estimator.}
\label{section:MOM-standard}

Recall that we are given an i.i.d. sample $X_1,\ldots,X_N$ from distribution $P$ with mean $\mu$ and variance $\sigma^2$. The median of means estimator of $\mu$ is constructed as follows: let $G_1\cup\ldots\cup G_k\subseteq [N]$ be an arbitrary (possibly random but independent from the data) collection of $k\leq N/2$ disjoint subsets (``blocks'') of cardinality $\lfloor N/k\rfloor$ each, $\bar X_j:=\frac{1}{|G_j|}\sum_{i\in G_j} X_i$ and 
\[
\wh\mu_{\mathrm{MOM}} = \med{\bar X_1,\ldots,\bar X_k}.
\] 
It is known \citep[e.g.][]{lerasle2011robust,devroye2016sub} that $\wh\mu_{\mathrm{MOM}}$ satisfies inequality \eqref{eq:sg} for $t=k$ and $L= 8e^2$. This value of $L$ appears to be overly pessimistic however: it follows from Theorem 5 in \citep{minsker2017distributed} that if $k\to\infty$ sufficiently slow so that the bias of $\wh\mu_{\mathrm{MOM}}$ is of order $o(N^{-1/2})$, then 
\ben{
\label{eq:clt1}
\sqrt{N}(\wh\mu_{\mathrm{MOM}} - \mu)\xrightarrow{d} N\l(0, \frac{\pi}{2}\sigma^2\r)
}
as $k,N/k\to\infty$. In particular, if $\mb E|X|^{2+\delta}<\infty$ for some $0<\delta\leq 1$, then $k=o\l(N^{\delta/(1+\delta)}\r)$ suffices for the asymptotic unbiasedness and asymptotic normality to hold. 
Asymptotic relation \eqref{eq:clt1} suggests that the best value of the constant $L$ in the deviation inequality \eqref{eq:sg} for the estimator $\wh\mu_{\mathrm{MOM}}$ is $\pi+o(1)$. We will demonstrate that this is indeed the case. Denote 
\ben{
\label{eq:g}
g(m):= \frac{1}{\sqrt m} \mb E\l[ \l(\frac{X_1 - \mu}{\sigma}\r)^2 \min\l(\l|\frac{X_1-\mu}{\sigma}\r|, \sqrt m\r) \r],
}
Clearly, $g(m)\to 0$ as $m\to\infty$ for distributions with finite variance. \citet{feller1968berry} proved that $\sup_{t\in \mb R}\l| \Phi_m(t) - \Phi(t)\r|\leq 6g(m)$ where $\Phi_m$ and $\Phi$ are the distribution functions of $\frac{\sum_{j=1}^m X_j - \mu}{\sigma\sqrt{m}}$ and the standard normal law respectively. It is well known that $g(m) \leq C \mb E\l| \frac{X_1-\mu}{\sigma}\r|^q m^{-(q-2)/2}$ whenever $\mb E|X_1-\mu|^q<\infty$ for some $q\in(2,3]$.
The next result can be viewed as a non-asymptotic analogue of relation \eqref{eq:clt1}. 
\begin{theorem}
\label{th:dev-1}
The following bound holds:
\ben{
\label{eq:mom-subopt}
\pr{\l|\sqrt{N}(\wh\mu_{\mathrm{MOM}} - \mu)\r|\geq \sigma\sqrt t} \leq 2\exp{- \frac{t}{\pi}(1+o(1))}.
}
Here, $o(1)$ is a function that goes to $0$ as $k,N/k\to\infty$, uniformly over $t\in \l[ l_{k,N},u_{k,N} \r]$ for any sequences $l_{k,N}\gg k\,g^2(N/k)$ and $u_{k,N} \ll k$.
\end{theorem}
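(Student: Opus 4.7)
The plan is to reduce the tail bound for $\wh\mu_{\mathrm{MOM}}$ to a binomial tail whose success probability is controlled sharply via Berry--Esseen. First, observe that the event $\wh\mu_{\mathrm{MOM}}-\mu\geq \sigma\sqrt{t/N}$ occurs exactly when at least $\lceil k/2\rceil$ of the i.i.d.\ block means $\bar X_1,\ldots,\bar X_k$ exceed $\mu+\sigma\sqrt{t/N}$, so the number of such blocks is distributed as $\mathrm{Bin}(k,p_{k,N}(t))$, where
\[
p_{k,N}(t):=\pr{\bar X_1-\mu\geq \sigma\sqrt{t/N}}=\pr{\frac{\bar X_1-\mu}{\sigma/\sqrt{m}}\geq \sqrt{t/k}}
\]
and $m:=\lfloor N/k\rfloor$ is the block size. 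The complementary event $\wh\mu_{\mathrm{MOM}}-\mu\leq -\sigma\sqrt{t/N}$ is handled symmetrically and contributes the factor $2$ in \eqref{eq:mom-subopt}.

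Next, apply the Feller--Berry--Esseen bound cited just before the theorem to write $p_{k,N}(t)=\bar\Phi(\sqrt{t/k})+r_{k,N}(t)$ with $\bar\Phi:=1-\Phi$ and $|r_{k,N}(t)|\leq 6g(m)$. Since $t\leq u_{k,N}\ll k$, the argument $\sqrt{t/k}\to 0$, so Taylor expansion at zero yields $\bar\Phi(\sqrt{t/k})=\tfrac12-\tfrac{1}{\sqrt{2\pi}}\sqrt{t/k}(1+o(1))$. The hypothesis $t\geq l_{k,N}\gg kg^2(m)$ ensures $g(m)\ll\sqrt{t/k}$, making $r_{k,N}(t)$ negligible relative to the linear term. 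Thus $p_{k,N}(t)=\tfrac12-\eps$ with $\eps=\tfrac{1}{\sqrt{2\pi}}\sqrt{t/k}\,(1+o(1))$, the $o(1)$ being uniform in $t$ over the prescribed range.

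Finally, combine the Chernoff bound for the binomial with the explicit form of its Kullback--Leibler divergence:
\[
\pr{\mathrm{Bin}(k,\tfrac12-\eps)\geq k/2}\leq \exp{-kD\!\left(\tfrac12\,\|\,\tfrac12-\eps\right)}=\exp{-\tfrac{k}{2}\log\tfrac{1}{1-4\eps^2}}.
\]
Expanding $\tfrac{k}{2}\log\tfrac{1}{1-4\eps^2}=2k\eps^2+O(k\eps^4)$ as $\eps\to 0$, substituting $\eps^2=\tfrac{t}{2\pi k}(1+o(1))$, and noting that $k\eps^4=O(t^2/k)=o(t)$ under $t\ll k$, the exponent becomes $\tfrac{t}{\pi}(1+o(1))$. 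Together with the mirror-image contribution from the lower tail, this yields \eqref{eq:mom-subopt}.

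The principal obstacle will be the \emph{uniformity} of the $o(1)$ terms across $t\in[l_{k,N},u_{k,N}]$. Three distinct approximations -- the Berry--Esseen remainder $g(m)$, the Taylor expansion of $\bar\Phi$ at $0$, and the Taylor expansion of the binomial KL divergence at $p=1/2$ -- must each be controlled jointly as $k,N/k\to\infty$. The two-sided restriction $kg^2(m)\ll t\ll k$ is precisely what permits all three remainders to be absorbed into a single $o(1)$ factor. Minor bookkeeping is also required for the parity of $k$ in the definition of the median, but since $k\to\infty$ this adjustment affects only lower-order terms.
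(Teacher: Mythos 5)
Your proof is correct and, at bottom, takes the same route as the paper: both reduce the event $\{\wh\mu_{\mathrm{MOM}}-\mu\geq\sigma\sqrt{t/N}\}$ to a tail event for a binomial count of how many block means cross the threshold, and both control the per-block success probability with the Feller--Berry--Esseen bound via $g(m)$. The two surface differences are worth noting. First, you obtain the binomial reduction directly from the order-statistic description of the median, while the paper passes through the loss $\rho(x)=|x|$ and the first-order condition $F'_-(\mu+\sqrt{t/N})\leq 0$; since $\rho'_-$ is a $\pm 1$ sign function, the paper's sum $\sum_j\rho'_-(\cdot)$ is literally the (recentered) binomial count, so the two framings coincide. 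Your direct route is cleaner here, but the paper's $\rho$-based formulation is deliberately chosen because it carries over unchanged to the permutation-invariant estimator $\wh\mu_N$ in Corollary \ref{th:clt-1} and Theorem \ref{th:U-mom}, where the count is replaced by a U-statistic and the plain order-statistic argument no longer applies. Second, you close with the exact binomial Chernoff bound $\exp(-kD(\tfrac12\|\tfrac12-\eps))$ and expand the KL divergence, whereas the paper uses Bernstein's inequality for the recentered sum; in the relevant regime $\eps\to 0$, $k\eps^2\to\infty$, both give the identical leading exponent $2k\eps^2=\tfrac{t}{\pi}(1+o(1))$, and your remainder control $k\eps^4=O(t^2/k)=o(t)$ matches the paper's $O(\sqrt{t/k})$ correction inside Bernstein. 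Your closing remark on the parity of $k$ is the right caveat: for even $k$ the equivalence you state becomes a one-sided inclusion, but since only the upper bound on the probability is needed this is harmless.
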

\begin{remark}
\begin{enumerate}
\item Note that the bound of the theorem holds in some range of the confidence parameter (such estimators are often called ``multiple-$\delta$'' in the literature, e.g., see \citet{devroye2016sub}), however, this range is distribution-dependent. In particular, if $\sqrt{k}\,g(N/k)\to 0$ as $k,N\to\infty$, the previous bound holds in the range $1\leq t \ll k$, but the function $g(\cdot)$ depends on $P$ and may converge to $0$ arbitrarily slow. Under additional assumptions, more concrete bounds can be deduced: for instance, if $\mb E|X/\sigma|^{2+\eps}<\infty$ for some $0<\eps\leq 1$, the condition $\sqrt{k}\,g(N/k)\to 0$ is satisfied if $k=o\l( N^{\frac{\eps}{1+\eps}} \r)$ as $N\to\infty$. In general, by choosing $k$ appropriately, we can construct a version of the median of means estimator that satisfies required guarantees for any $1\leq t \ll N$. 
\item The exact expression for the function $o(1)$ appearing in the statement of Theorems \ref{th:dev-1} and well as other results in the paper (e.g. Theorem \ref{th:U-mom}) is not made explicit. We remark that it depends on the distribution of $X_1$ through the function $g(\cdot)$ defined in \eqref{eq:g}, and on the ratios $\frac{kg^2(N/k)}{l_{k,N}}$ and $\frac{u_{k,N}}{k}$. 
\end{enumerate}
\end{remark}


\begin{proof}[Proof of Theorem \ref{th:dev-1}]
As $\wh\mu_{\mathrm{MOM}}$ is scale-invariant, we can assume without loss of generality that 
$\sigma^2=1$. 
Denote $m=\lfloor N/k\rfloor$ for brevity, let $\rho(x) = |x|$, and note that the equivalent characterization of $\wh\mu_{\mathrm{MOM}}$ is 
\[
\wh\mu_{\mathrm{MOM}}\in \argmin_{z\in \mb R} \sum_{j=1}^k \rho\l( \sqrt{m}\l( \bar X_j - z\r)\r).
\]
The necessary conditions for the minimum of $F(z):=\sum_{j=1}^k \rho\l( \sqrt{m}\l( \bar X_j - z\r)\r)$ imply that $0\in \partial F(\wh\mu_{\mathrm{MOM}})$ -- the subgradient of $F$, hence the left derivative 
$F'_-(\wh\mu_{\mathrm{MOM}})\leq 0$. Therefore, if $\sqrt{N}\l(\wh\mu_{\mathrm{MOM}} - \mu\r)\geq \sqrt t$ for some $t > 0$, then $\wh\mu_{\mathrm{MOM}} \geq \mu + \sqrt{t/N}$ and, due to $F'_-$ being nondecreasing,  
$F'_-\l( \mu + \sqrt{t/N} \r)\leq 0$. It implies that 
\mln{
\label{eq:b001}
\pr{\sqrt{N}(\wh\mu_{\mathrm{MOM}} - \mu)\geq \sqrt t} \leq \pr{\sum_{j=1}^k \rho'_-\l( \sqrt{m}\l( \bar X_j - \mu - \sqrt{t/N}\r)\r) \geq 0} 
\\
= \pr{\frac{1}{\sqrt k}\sum_{j=1}^k \l(\rho'_-\l( \sqrt{m}\l( \bar X_j - \mu - \sqrt{t/N}\r)\r) - \mb E\rho'_-\r) \geq 
-\sqrt{k}\mb E\rho'_- }
}
where we used the shortcut $\mb E\rho'_-$ in place of $\mb E\rho'_-\l( \sqrt{m}\l( \bar X_j - \mu - \sqrt{t/N}\r)\r)$. 
Note that 
\mln{
\label{eq:b11}
-\sqrt{k} \mb E\rho'_-\l( \sqrt{m}\l( \bar X_j - \mu - \sqrt{t/N}\r)\r) = 
-\sqrt{k}\l(1 - 2 \pr{ \sqrt{m}\l( \bar X_j - \mu - \sqrt{t/N}\r) \leq 0} \r)
\\
= 2\sqrt{k}\l( \Phi\l( \frac{\sqrt t}{\sqrt{k}}\r) - \Phi(0) \r) - 2\sqrt{k}\l(\Phi\l( \frac{\sqrt t}{\sqrt{k}} \r) - \pr{ \sqrt{m}\l( \bar X_j - \mu \r) \leq \frac{\sqrt t}{\sqrt{k}}} \r)
\\
\leq 2\sqrt{k} \cdot g(m) + 2\sqrt t \frac{1}{\sqrt t/\sqrt{N/m}}\l( \Phi\l( \frac{\sqrt t}{\sqrt{N/m}}\r) - \Phi(0) \r). 
}
Since 
\mln{
\label{eq:b12}
2 \sqrt{t}\frac{1}{\sqrt t/\sqrt{N/m}}\l( \Phi\l( \frac{\sqrt t}{\sqrt{N/m}}\r) - \Phi(0) \r) = 2 \sqrt t \l( \phi(0) + O(t/\sqrt{N/m}) \r)
\\
= \sqrt t \l(\sqrt{\frac 2 \pi} + O(t/\sqrt{N/m}) \r)
}
where $\phi(t) = \Phi'(t)$, we see that 
\[
-\sqrt{k}\, \mb E\rho'_-\l( \sqrt{m}\l( \bar X_j - \mu - \sqrt{t/N}\r)\r) 
\leq 2\sqrt{k}\cdot g(m) + \sqrt t\l( \sqrt{\frac 2 \pi} + O(\sqrt{t/k}) \r)
\]
which is $\sqrt t\sqrt{\frac 2 \pi} \l(1 + o(1)\r)$ whenever $t\ll k$ and $t \gg k\, g^2(m)$. 
It remains to apply Bernstein's inequality to the right-hand side in \eqref{eq:b001}. Observe that 
\ml{
\var\l( \rho'_- \l( \sqrt{m}\l( \bar X_j - \mu - \sqrt{t/N}\r)\r) \r) = 4 \var\l( I \l\{ \sqrt{m}\l( \bar X_j - \mu \r) \leq  \sqrt{t/k} \r\} \r) 
\\
=4 \pr{\sqrt{m}\l( \bar X_j - \mu \r) \leq  \sqrt{t/k}}\l( 1 - \pr{\sqrt{m}\l( \bar X_j - \mu \r) \leq \sqrt{t/k}} \r) 
\leq 1, 
}
therefore
\ml{
\pr{\sqrt{N}(\wh\mu_{\mathrm{MOM}} - \mu)\geq \sqrt t} \leq 
\exp{-\frac{t}{\pi(1+o(1)) + \frac{2\sqrt t\sqrt{2\pi}}{3}\frac{1}{\sqrt k} \l(1 + o(1)\r) }}
\\
= \exp{- \frac{t}{\pi}(1+o(1))}
}
whenever $\sqrt{k}\,g(m) \ll t\ll \sqrt{k}$. 
Similar reasoning gives a matching bound for $\pr{\sqrt{N}(\wh\mu_{\mathrm{MOM}} - \mu)\leq -\sqrt t}$, and the result follows.
\end{proof}
One may ask whether the median of means estimator admits a more sample-efficient modification, one that would satisfy inequality \eqref{eq:sg} with a constant $L$ smaller than $\pi$. A natural idea is to require that the estimator is invariant with respect to permutations of the data or, equivalently, is a function of order statistics only. Such an extension of the MOM estimator was proposed by \citet{minsker2017distributed}, however no provable improvements for the performance over the standard MOM estimator were established rigorously. The question of such improvements, especially the guarantees expressed in the form \eqref{eq:sg}, is addressed next. 
Let us recall the proposed construction. Assume that $2\leq m < N$ and, given $J\subseteq [N]$ of cardinality $|J|=m$, set $\bar X_J:= \frac{1}{m}\sum_{j\in J}X_j$. Define $\m A_{N}^{(m)} = \l\{ J\subset [N]: \ |J|=m\r\}$ and
\ben{
\label{eq:U-mom-est}
\wh\mu_{N} : =\med{\bar X_J, \ J\in \m A_{N}^{(m)}},
}
where $\l\{\bar X_J, \ J\in \m A_{N}^{(m)}\r\}$ denotes the set of sample averages computed over all possible subsets of $[N]$ of cardinality $m$; in particular, unlike the standard median-of-means estimator, 
$\wh\mu_{N}$ is uniquely defined. 
Note that for $m=2$, $\wh\mu_N$ coincides with the well known Hodges-Lehmann estimator of location \citep{hodges1963estimates}. When $m$ is a fixed integer greater than $2$, $\wh\mu_N$ is known as the generalized Hodges-Lehmann estimator. Its asymptotic properties are well-understood and can be deduced from results by \citet{serfling1984generalized}, among other works. For example, its breakdown point is $1-(1/2)^{1/m}$ and, in case of normally distributed data, the asymptotic distribution of $\sqrt{N}(\wh\mu_N-\mu)$ is centered normal with variance $\Delta_m^2 = m\sigma^2\arctan\l( \frac{1}{\sqrt{m^2-1}}\r)$. In particular, $\Delta_m^2 = \sigma^2(1+o(1))$ as $m\to\infty$. When the underlying distribution is not symmetric however, $\wh\mu_N$ is biased for the mean, and the properties of this estimator in the regime $m\to\infty$ have not been investigated in the robust statistics literature (to the best of our knowledge). Only very recently, \cite{diciccio2022clt} proved that whenever $m\to\infty$, $m=o(\sqrt{N})$ and the sample is normally distributed, $\sqrt{N}(\wh\mu_N-\mu)\to N(0,\sigma^2)$. We will extend this result in several directions: first, by allowing a much wider class of underlying distributions, second, by including the case when $\sqrt{N} \ll m \ll N$ which is interesting as $\mathrm{bias}\l( \wh\mu_N\r)$ is $o\l( N^{-1/2}\r)$ in this regime, and finally by presenting sharp sub-Gaussian deviation inequalities for $\wh\mu_N$ that hold for heavy-tailed data.

Let us remark that an argument behind Theorem \ref{th:dev-1} combined with a version of Bernstein's inequality for U-statistics due to \cite{hoeffding1963probability} immediately implies that $\wh\mu_N$ satisfies relation \eqref{eq:mom-subopt}. Similar reasoning applies to other deviation guarantees for the classical median of means estimator that exist in the literature, so in this sense $\wh\mu_N$ always performs at least as good as $\wh\mu_{\mathrm{MOM}}$.

 Analysis of the estimator $\wh\mu_{N}$ is most naturally carried out using the language of U-statistics. The following section introduces the necessary background, while additional useful facts are summarized in section \ref{sec:tools}. 


\section{Asymptotic normality of U-statistics and the implications for $\wh\mu_N$.}
\label{sec:normality}

Let $Y_1,\ldots,Y_N$ be i.i.d. random variables with distribution $P_Y$ and assume that $h_m:\mb R^m\mapsto \mb R,  \ m\geq 1$ are square-integrable with respect to $P_Y^{m}$ and permutation-symmetric functions, meaning that that 
$\mb E h_m^2(Y_1,\ldots,Y_m)<\infty$ and $h_m(x_{\pi(1)},\ldots,x_{\pi(m)}) = h_m(x_1,\ldots,x_m)$ for any $x_1,\ldots,x_m\in \mb R$ and any permutation $\pi:[m]\mapsto [m]$. Without loss of generality, we will also assume that 
$\mb E h_m:=\mb Eh_m(Y_1,\ldots,Y_m)=0$. Recall that $\m A_{N}^{(m)} = \l\{ J\subseteq [N]: \ |J|=m\r\}$. The U-statistic with kernel $h_m$ is defined as 
\ben{
\label{eq:U-stat1}
U_{N,m} = \frac{1}{{N\choose m}}\sum_{J\in \m A_{N}^{(m)}} h_m(Y_i, \ i\in J).
}
For $i\in[N]$, let 
\ben{
\label{eq:a00}
h_m^{(1)}(Y_i) = \mb E\l[ h_m(Y_1,\ldots,Y_{m})\,|\,Y_i\r].
}
We will assume that $\pr{h_m^{(1)}(Y_1)\ne 0}>0$ for all $m$, meaning that the kernels $h_m$ are non-degenerate. The random variable 
\[
S_{N,m}:=\sum_{j=1}^N \mb E \l[U_{N,m} \,|\, Y_j \r] = \frac{m}{N} \sum_{j=1}^N h_m^{(1)}(Y_j),
\] 
known as the H\'{a}jek projection of $U_{N,m}$, is essentially the best approximation of $U_{N,m}$ in terms of the sum of i.i.d. random variables of the form $f(Y_1)+\ldots+f(Y_m)$. 
We are interested in the sufficient conditions guaranteeing that $\frac{U_{N,m} - S_{N,m}}{\sqrt{\var(S_{N,m})}} = o_P(1)$ as $N,m\to\infty$. Such asymptotic relation immediately implies that the limiting behavior of $U_{N,m}$ is defined by the H\'{a}jek projection $S_{N,m}$. 
Results of these type for U-statistics of fixed order $m$ are standard and well-known \citep{hoeffding1948class,serfling2009approximation,lee1990u}. However, we are interested in the situation when $m$ is allowed to grow with $N$, possibly up to the order $m=o(N)$. U-statistics of growing order were studied for example by \citet{frees1989infinite}, however existing results are not readily applicable in our framework. Very recently, such U-statistics have been investigated in relation to performance of Breiman's random forests algorithm (e.g. see the papers by \citet{song2019approximating} and \citet{peng2022rates}). The following theorem is essentially due to \cite{peng2022rates}; we give a different proof of this fact in Appendix \ref{proof:U-stat} as we rely on parts of the argument elsewhere in the paper.
\begin{theorem}
\label{th:U-stat}
Assume that $\frac{\var\l( h_m(Y_1,\ldots,Y_m)\r)}{\var \l(h_m^{(1)}(Y_1)\r)} = o(N)$ as $N,m\to\infty$. \footnote{It is well known \citep{hoeffding1948class} that $\var \l(h^{(1)}(Y_1)\r) \leq \frac{\var(h_m)}{m}$, therefore the condition imposed on the ratio of variances implies that $m=o(N)$.} 
Then $\frac{U_{N,m} - S_{N,m}}{\sqrt{\var(S_{N,m})}} = o_P(1)$ as $N,m\to\infty$. 
\end{theorem}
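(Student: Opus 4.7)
The plan is to reduce the claim to a pure variance computation via Chebyshev's inequality, and then estimate that variance using the Hoeffding orthogonal decomposition of $U_{N,m}$.

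First, I would observe that $S_{N,m}$ is the $L_2$-projection of $U_{N,m}$ onto the additive linear space of sums $\sum_{j=1}^N f(Y_j)$. A one-line computation gives $\cov(U_{N,m},S_{N,m}) = \var(S_{N,m})$, so
\[
\var(U_{N,m} - S_{N,m}) = \var(U_{N,m}) - \var(S_{N,m}).
\]
Applying Chebyshev's inequality to $(U_{N,m} - S_{N,m})/\sqrt{\var(S_{N,m})}$, it suffices to prove that
\[
\frac{\var(U_{N,m}) - \var(S_{N,m})}{\var(S_{N,m})} = o(1).
\]

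Next, I would invoke the Hoeffding orthogonal decomposition
\[
U_{N,m} - \mb E U_{N,m} = \sum_{c=1}^{m} \binom{m}{c} V_{N,c}, \qquad V_{N,c} = \binom{N}{c}^{-1} \sum_{J \in \m A_{N}^{(c)}} g_c(Y_J),
\]
where each canonical kernel $g_c$ is completely degenerate and the $V_{N,c}$ are pairwise uncorrelated with $\var(V_{N,c}) = \delta_c^2 / \binom{N}{c}$, writing $\delta_c^2 := \var(g_c)$. Since $g_1 = h_m^{(1)}$, the leading summand equals $m V_{N,1} = S_{N,m}$, giving
\[
\var(U_{N,m}) - \var(S_{N,m}) = \sum_{c=2}^{m} \binom{m}{c}^2 \frac{\delta_c^2}{\binom{N}{c}}.
\]

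The core estimate rests on two elementary facts. First, applying the Hoeffding decomposition to the kernel itself on an $m$-tuple yields $\var(h_m) = \sum_{c=1}^{m} \binom{m}{c}\delta_c^2$, so $\delta_c^2 \leq \var(h_m)/\binom{m}{c}$. Second, since $(m-i)/(N-i) \leq m/N$ for $0 \leq i < m \leq N$, one has $\binom{m}{c}/\binom{N}{c} \leq (m/N)^c$. Combining these,
\[
\var(U_{N,m}) - \var(S_{N,m}) \leq \var(h_m) \sum_{c=2}^{m} \left(\frac{m}{N}\right)^c \leq 2\,\var(h_m)\,\frac{m^2}{N^2},
\]
valid for $m \leq N/2$, which holds eventually since the hypothesis forces $m = o(N)$. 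Dividing by $\var(S_{N,m}) = m^2 \var(h_m^{(1)}(Y_1))/N$ produces
\[
\frac{\var(U_{N,m}) - \var(S_{N,m})}{\var(S_{N,m})} \leq \frac{2\,\var(h_m)}{N\,\var(h_m^{(1)}(Y_1))} = o(1)
\]
by the assumed ratio bound. Chebyshev closes the argument.

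The only point requiring genuine care is the identification of the $c=1$ term in the Hoeffding decomposition with $S_{N,m}$ and the orthogonality of the canonical $V_{N,c}$'s; once this bookkeeping is set up, the desired rate comes entirely from the universal inequality $\binom{m}{c}\delta_c^2 \leq \var(h_m)$ and the geometric sum in powers of $m/N$. No moment conditions beyond square-integrability of $h_m$ are needed.
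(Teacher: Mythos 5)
Your proof is correct and mirrors the paper's argument essentially step for step: both reduce to showing $\var(U_{N,m}-S_{N,m})/\var(S_{N,m})=o(1)$ via Chebyshev, both use the Hoeffding orthogonal decomposition and the identity $\var(U_{N,m})-\var(S_{N,m})=\sum_{c\geq 2}\binom{m}{c}^2\delta_c^2/\binom{N}{c}$, and both close with the two inequalities $\delta_c^2\leq\var(h_m)/\binom{m}{c}$ and $\binom{m}{c}/\binom{N}{c}\leq(m/N)^c$ to get a geometric tail $\leq 2\var(h_m)(m/N)^2$. No substantive difference.
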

It is easy to see that asymptotic normality of $\frac{U_{N,m}}{\sqrt{\var(S_{N,m})}}$ immediately follows from the previous theorem whenever its assumptions are satisfied. Next, we will apply this result to establish asymptotic normality of the estimator $\wh\mu_N$ defined via \eqref{eq:U-mom-est}.

\begin{corollary}
\label{th:clt-1}
Let $X_1,\ldots,X_N$ be i.i.d. with finite variance $\sigma^2$. Moreover, assume that $\sqrt{\frac{N}{m}}\, g(m)\to 0$ as $N/m$ and $m\to\infty$. Then 
\[
\sqrt{N}\l( \wh\mu_N - \mu\r)\xrightarrow{d} \m N(0,\sigma^2)
\]
as $N/m$ and $m\to\infty$.
\end{corollary}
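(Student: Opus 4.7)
The plan is to express the cdf of $\sqrt{N}(\wh\mu_N - \mu)$ through an indicator U-statistic and invoke Theorem~\ref{th:U-stat}. Fix $t\in\mb R$, set $z_t = \mu + t/\sqrt{N}$, and define
\[
U_N(z_t) = \frac{1}{{N\choose m}}\sum_{J\in \m A_N^{(m)}} I\l\{\bar X_J \leq z_t\r\}.
\]
A median-identity argument as in the proof of Theorem~\ref{th:dev-1} (applied with the U-statistic over all $m$-subsets in place of a block partition) gives $\pr{\sqrt N(\wh\mu_N-\mu)\leq t} = \pr{U_N(z_t)\geq 1/2} + o(1)$, because the median threshold $\lceil {N\choose m}/2\rceil/{N\choose m}$ differs from $1/2$ by $O(1/{N\choose m})$. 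It therefore suffices to show
\[
\sigma\sqrt{2\pi N/m}\,\bigl(U_N(z_t) - \tfrac12\bigr) \xrightarrow{d} t - \m Z, \qquad \m Z \sim N(0,\sigma^2).
\]

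Let $F_k$ denote the cdf of $\frac{1}{\sigma\sqrt k}\sum_{i=1}^k(X_i-\mu)$; Feller's Berry--Esseen inequality yields $\sup_y|F_k(y)-\Phi(y)|\leq 6\,g(k)$. For the bias,
\[
\mb E U_N(z_t) - \tfrac12 = \Phi\l(\tfrac{t\sqrt m}{\sigma\sqrt N}\r) - \tfrac12 + O(g(m)) = \frac{t}{\sigma}\sqrt{\frac{m}{2\pi N}} + O\l(g(m)+\tfrac{m}{N}\r),
\]
so multiplying by $\sigma\sqrt{2\pi N/m}$ gives $t+o(1)$ thanks to $\sqrt{N/m}\,g(m)\to 0$ and $m/N\to 0$. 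For the centered part, take $h_m(x_1,\ldots,x_m) = I\{\bar x\leq z_t\} - \mb E U_N(z_t)$, whose variance satisfies the trivial bound $\var(h_m)\leq 1/4$. Conditioning on one coordinate gives
\[
h_m^{(1)}(x) = F_{m-1}\l(\tfrac{mt/\sqrt N - (x-\mu)}{\sigma\sqrt{m-1}}\r) - F_m\l(\tfrac{t\sqrt m}{\sigma\sqrt N}\r),
\]
and replacing $F_{m-1},F_m$ by $\Phi$ at cost $O(g(m))$, then Taylor expanding $\Phi$ near zero, produces the pointwise identity
\[
\sqrt{m-1}\,h_m^{(1)}(x) = -\phi(0)\cdot\frac{x-\mu}{\sigma} + r_m(x), \qquad \mb E\,r_m^2(X_1) = o(1).
\]
Consequently $\var(h_m^{(1)}) = (1+o(1))/(2\pi(m-1))$, the ratio $\var(h_m)/\var(h_m^{(1)})$ is $O(m)=o(N)$, and Theorem~\ref{th:U-stat} applies.

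Combining the pieces, Theorem~\ref{th:U-stat} delivers $U_N(z_t) - \mb E U_N(z_t) = S_N(z_t) + o_P(\sqrt{m/N})$ with $S_N(z_t) = \frac{m}{N}\sum_j h_m^{(1)}(X_j)$. Substituting the linearization,
\[
\sigma\sqrt{2\pi N/m}\,\bigl(U_N(z_t) - \mb E U_N(z_t)\bigr) = -\sqrt{\tfrac{m}{m-1}}\cdot\frac{1}{\sqrt N}\sum_{j=1}^N (X_j-\mu) + o_P(1)\xrightarrow{d} -\m Z
\]
by the classical CLT, so $\sigma\sqrt{2\pi N/m}(U_N(z_t)-1/2)\xrightarrow{d} t-\m Z$ and $\pr{\sqrt N(\wh\mu_N-\mu)\leq t}\to \pr{\m Z\leq t}=\Phi(t/\sigma)$, yielding the claim. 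The step I expect to be the main obstacle is the $L^2$ upgrade of the pointwise Berry--Esseen estimate to $\mb E\,r_m^2(X_1)=o(1)$: one must split the integration region on $\{|X_1-\mu|\leq A\sigma\sqrt{m-1}\}$ (where the first-order Taylor expansion of $\Phi$ is accurate) and its complement (where boundedness $0\leq\Phi\leq 1$ together with finiteness of $\var(X_1)$ suffice), and then send $A\to\infty$ carefully. The hypothesis $\sqrt{N/m}\,g(m)\to 0$ is precisely calibrated so that both the bias error and the projection error remain negligible at the natural scale $\sqrt{m/N}$.
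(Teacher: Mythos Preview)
Your overall architecture is the same as the paper's: reduce to an indicator U-statistic, compute the bias via Berry--Esseen, linearize the H\'{a}jek projection, and invoke Theorem~\ref{th:U-stat}. The gap is in the linearization step for $h_m^{(1)}$. Replacing $F_{m-1}$ by $\Phi$ at cost $O(g(m))$ and then scaling by $\sqrt{m-1}$ produces an error term of size $O(\sqrt{m}\,g(m))$, and this is \emph{not} $o(1)$ under the stated hypothesis. The assumption is only $\sqrt{N/m}\,g(m)\to 0$; when $m\gg \sqrt{N}$ (which is precisely the regime the paper emphasizes, and the one forced by the bias condition when $g$ decays slowly), $\sqrt{m}\,g(m)$ can diverge. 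Concretely, with only finite variance $g(m)$ can go to zero arbitrarily slowly, so the bias condition may require $m$ close to $N$, making $\sqrt{m}\,g(m)\to\infty$. Your splitting argument on $\{|X_1-\mu|\le A\sigma\sqrt{m-1}\}$ handles the Taylor remainder of $\Phi$ but does nothing for this Berry--Esseen residual, which survives on the bulk set with the same $O(\sqrt{m}\,g(m))$ size.

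The paper avoids this by not approximating $F_{m-1}$ pointwise at all. Instead it writes $h_m^{(1)}(X_1)$ as an integral of \emph{increments} $\wt\Phi_{m-1}(a-X_1)-\wt\Phi_{m-1}(a-x)$ against $dP(x)$ and invokes the local limit theorem (Shepp for non-lattice, the standard lattice version otherwise), which gives
\[
\wt\Phi_{m-1}(a+h)-\wt\Phi_{m-1}(a)=\frac{h}{\sigma\sqrt{2\pi(m-1)}}\,e^{-a^2/(2(m-1)\sigma^2)}+o(m^{-1/2})
\]
uniformly in $a$. After scaling by $\sqrt{m}$ the remainder is $o(1)$ regardless of how slowly $g(m)$ decays. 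To pass to the limit in $L^2$, the paper supplies an integrable majorant via the concentration-function bound $\sup_z\sqrt{m}\,\pr{\sum_{j\le m-1}X_j\in(z,z+h]}\le C|h|$ (Petrov), so that dominated convergence applies. Your argument would be correct under the extra hypothesis $\sqrt{m}\,g(m)\to 0$ (e.g.\ $m=o(\sqrt{N})$), but to cover the full range of the corollary you need the local-limit-theorem route.
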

\begin{remark}
Requirement $\sqrt{\frac{N}{m}}\, g(m)\to 0$ guarantees that $\mathrm{bias}(\wh\mu_N)=o(N^{-1/2})$. Without this requirement, asymptotic normality can be established for the debiased estimator $\wh\mu_N - \mb E\wh\mu_N$. 
\end{remark}
\begin{proof}
Let $\rho(x)=|x|$ and note that the equivalent characterization of $\wh\mu_{N}$ is 
\[
\wh\mu_{N}\in \argmin_{z\in \mb R} \sum_{J\in \m A_N^{(m)}} \rho\l( \sqrt{m}\l( \bar X_J - z\r)\r).
\]
The necessary conditions for the minimum of this problem imply that for any fixed $t\geq 0$,
\aln{
\label{eq:c00}
&\pr{\sum_{J\in \m A_N^{(m)}} \rho'_-\l( \sqrt{m}\l( \bar X_J - \mu - tN^{-1/2}\r)\r) > 0} \leq
\pr{\sqrt{N}(\wh\mu_{N} - \mu)\geq t} \text{ and }
\\
\nonumber
&\pr{\sqrt{N}(\wh\mu_{N} - \mu)\geq t} \leq \pr{\sum_{J\in \m A_N^{(m)}} \rho'_-\l( \sqrt{m}\l( \bar X_J - \mu - tN^{-1/2}\r)\r) \geq 0}. 
}
Therefore, it suffices to show that the upper and lower bounds for $\pr{\sqrt{N}(\wh\mu_{N} - \mu)\geq t}$ converge to the same limit. To this end, we see that 
\mln{
\label{eq:b01}
\pr{\sum_{J\in \m A_N^{(m)}} \rho'_-\l( \sqrt{m}\l( \bar X_J - \mu - tN^{-1/2}\r)\r) \geq 0} 
\\
= \pr{\frac{\sqrt{N/m}}{{N \choose m}}\sum_{J\in \m A_N^{(m)}} \l(\rho'_-\l( \sqrt{m}\l( \bar X_J - \mu - tN^{-1/2}\r)\r) - \mb E\rho'_-\r) \geq -\sqrt{N/m}\mb E\rho'_- },
}
where $\mb E\rho'_-$ stands for $\mb E\rho'_-\l( \sqrt{m}\l( \bar X_J - \mu - tN^{-1/2}\r)\r)$. 
As it the proof of Theorem \ref{th:dev-1}, we deduce that 
$-\sqrt{N/m}\,\mb E\rho'_-\l( \sqrt{m}\l( \bar X_J - \mu - tN^{-1/2}\r)\r) \to \frac{t}{\sigma}\sqrt{\frac{2}{\pi}}$ whenever $\sqrt{N/m}\,g(m)\to 0$ and $N/m\to\infty$. 
It remains to analyze the U-statistic 
\[
\sqrt{\frac N m} \,U_{N,m} = \frac{\sqrt{N/m}}{{N \choose m}}\sum_{J\in \m A_N^{(m)}} \l(\rho'_-\l( \sqrt{m}\l( \bar X_J - \mu - tN^{-1/2}\r)\r) - \mb E\rho'_-\r).
\]
As the expression above is invariant with respect to the shift $X_j \mapsto X_j - \mu$, we can assume that $\mu=0$. 
To complete the proof, we will verify the conditions of Theorem \ref{th:U-stat} allowing one to reduce the asymptotic behavior of $U_{N,m}$ to the analysis of sums of i.i.d. random variables. 
For $i \in [N]$, let 
\be{
h^{(1)}(X_i) 
= \sqrt{\frac{N}{m}} \,\mb E\l[ \rho'_-\l( \frac{1}{\sqrt m}\sum_{j=1}^{m-1} \tilde X_j + \frac{X_i}{\sqrt m} - t/\sqrt{N/m} \r)\,\big|\,X_i \r] - \sqrt{\frac{N}{m}}\mb E\rho'_-,
}
where $(\tilde X_1,\ldots,\tilde X_m)$ is an independent copy of $(X_1,\ldots,X_m)$. Our goal is to understand the size of $\var(h^{(1)}(X_1))$: specifically, we will show that 
$\var\l( \frac{m}{\sqrt N} h^{(1)}(X_1) \r) \to \frac{2}{\pi}$ as both $m$ and $N/m\to \infty$. 
Given an integer $l\geq 1$, let $\wt\Phi_{l}(t)$ be the cumulative distribution function of $\sum_{j=1}^l X_j$. Then 
\ml{
\frac{m}{\sqrt N} h^{(1)}(X_1) = \sqrt{m}\l(2\wt\Phi_{m-1}\l( \frac{tm}{\sqrt{N}} - X_1\r) -1 \r) - \sqrt{m} \mb E\,\rho'_- 
\\
= 2\sqrt{m}\l( \wt\Phi_{m-1}\l(\frac{tm}{\sqrt{N}} - X_1 \r)  - 
\mb E\wt\Phi_{m-1}\l( \frac{tm}{\sqrt{N}} - X_1\r) \r)
\\
=2\sqrt{m}\int_\mb R \l( \wt\Phi_{m-1}\l(\frac{tm}{\sqrt{N}} - X_1 \r)  - \wt\Phi_{m-1}\l( \frac{tm}{\sqrt{N}} - x \r) \r)dP(x).
}
We will apply the dominated convergence theorem to analyze this expression. Consider first the situation when the distribution of $X_1$ is non-lattice 
\footnote{We say that $X_1$ has lattice distribution if $\pr{X_1 \in \alpha+k\beta, \ k\in \mb Z} = 1$ and there is no arithmetic progression $A\subset Z$ such that $\pr{X_1 \in \alpha+k\beta, \ k\in A} = 1$}. Then the local limit theorem for non-lattice distributions \citep[][Theorem 2]{shepp1964local} implies that  
\[
\wt\Phi_{m-1}\l(a+h\r) - \wt\Phi_{m-1}\l( a \r) = \frac{h}{\sqrt{2\pi (m-1)}\sigma} \exp{-\frac{a^2}{2(m-1)\sigma^2}}+ o(m^{-1/2}),
\]
where $\sqrt{m}\cdot o(m^{-1/2})$ converges to $0$ as $m\to\infty$ for every $h$ and uniformly in $a$. Therefore, we see that conditionally on $X_1$ and for every $x$,
\mln{
\label{eq:c01}
\wt\Phi_{m-1}\l(\frac{tm}{\sqrt{N}} - x + (x - X_1) \r)  - \wt\Phi_{m-1}\l( \frac{tm}{\sqrt{N}} - x \r) 
 \\
 = \frac{x - X_1}{\sqrt{2\pi (m-1)}\sigma } \exp{-(tm/\sqrt{N}-x)^2/2(m-1)\sigma^2}  + o(m^{-1/2})
}
uniformly in $m$. Since $m=o(N)$ by assumption, $\exp{-(tm/\sqrt{N}-x)^2/2(m-1)\sigma^2} = 1 + o(1)$ as $m,N\to\infty$, hence 
\[
2\sqrt{m}\l( \wt\Phi_{m-1}\l(\frac{tm}{\sqrt{N}} - x + (x - X_1) \r)  - \wt\Phi_{m-1}\l( \frac{tm}{\sqrt{N}} - x \r) \r)
= 2\frac{x - X_1}{\sqrt{2\pi}\sigma } + o(1)
\]
$P$-almost everywhere. Next, we will show that $q_m(x,X_1):=\sqrt m\l( \wt\Phi_{m-1}\l(\frac{tm}{\sqrt{N}} - X_1 \r)  - \wt\Phi_{m-1}\l( \frac{tm}{\sqrt{N}} - x \r) \r)$ admits an integrable majorant that does not depend on $m$. 
Note that 
\[
|q_m(x,X_1)| \leq \sup_{z}\sqrt m\,\pr{\sum_{j=1}^{m-1} X_j \in \big(z, z+|x-X_1| \big]} 
\leq C |x-X_1|,
\]
where the last inequality follows from the well known bound for the concentration function (Theorem 2.20 in the book by \cite{petrov1995limit}); here, $C=C(P)>0$ is a constant that may depend on the distribution of $X_1$. 
We conclude that by the dominated convergence theorem, 
\[
\frac{m}{\sqrt N} h^{(1)}(X_1) \to \sqrt{\frac{2}{\pi}}\frac{X_1}{\sigma}
\]
as $m,N/m\to\infty$, $P$-almost everywhere. As 
\[
\l| \frac{m}{\sqrt N} h^{(1)}(X_1) \r|\leq 2\l| \int_\mb R q_m(x,X_1) dP(x) \r| \leq C\int_\mb R |x-X_1| dP(x)
\]
and $\mb E\l( \int_\mb R |x-X_1| dP(x) \r)^2<\infty,$ the second application of the dominated convergence theorem yields that $\var\l( \frac{m}{\sqrt N} h^{(1)}(X_1)\r) \to \var\l(\sqrt{\frac{2}{\pi}}\frac{X_1}{\sigma} \r) = \frac{2}{\pi}$ as $N/m$ and $m\to\infty$. 

It remains to consider the case when $X_1$ has a lattice distribution. In this case, a version of the local limit theorem \citep{petrov1995limit} states that 
\be{
\pr{\sum_{j=1}^{m-1} X_j = (m-1)\alpha + q\beta} = \frac{\beta}{\sqrt{2\pi(m-1)}\sigma} e^{-\frac{((m-1)\alpha + q\beta)^2}{2\sigma^2 (m-1)}}+o(m^{-1/2})
} 
where the $o(m^{-1/2})$ term is uniform in $q\in Z$. 
For any $y$ in the interval $\big( \frac{tm}{\sqrt{N}} - x, \frac{tm}{\sqrt{N}} - x + (x - X_1)  \big]$ of the form $y = (m-1)\alpha + q\beta$, we have that $e^{-\frac{y^2}{2\sigma^2 (m-1)}} = 1 + o(1)$ as $\frac m N \to 0$. Therefore, similarly to \eqref{eq:c01}, in this case
\be{
2\sqrt{m}\l(\wt\Phi_{m-1}\l(\frac{tm}{\sqrt{N}} - x + (x - X_1) \r)  - \wt\Phi_{m-1}\l( \frac{tm}{\sqrt{N}} - x \r) \r) 
= 2\frac{x-X_1}{\sqrt{2\pi} \sigma} + o(1)
}
$P$-almost everywhere, where we also used the fact that the number of points of the form $(m-1)\alpha+q\beta$ in the interval of interest equals $\frac{x - X_1}{\beta}$. The rest of the proof proceeds exactly as in the case of non-lattice distributions, and concludes the part of the argument related to $\var\l( \frac{m}{\sqrt N} h^{(1)}(X_1) \r)$.

To finish the proof, note that, since $\|\rho'_-\|_\infty=1$, $\var\l( \sqrt{N/m}\,\rho'_-\l( \sqrt{m}\l( \bar X_J - \mu - tN^{-1/2}\r)\r)\r)\leq \frac{N}{m}$, hence
\[
\frac{\var\l( \sqrt{N/m}\,\rho'_-\l( \sqrt{m}\l( \bar X_J - \mu - tN^{-1/2}\r)\r)\r)}{\var\l( h^{(1)}(X_1)\r)} 
\leq \frac{N/m}{\frac{2}{\pi}(1+o(1))N/m^2} = \frac{m}{\frac{2}{\pi}(1+o(1))} = o(N)
\]
as $m\to\infty$ and $N/m\to\infty$. 
Therefore, Theorem \ref{th:U-stat} applies and yields that 
\[
\frac{\sqrt{\frac N m}U_{N,m} - \frac{m}{N}\sum_{j=1}^N h^{(1)}(X_j)}{ \frac{m^2}{N} \var\l( h^{(1)}(X_j) \r)} = o_P(1),
\]
where $\frac{m^2}{N} \var\l( h^{(1)}(X_j) \r) = \frac{2}{\pi}(1+o(1))$. 
In view of the Central Limit Theorem, 
$\frac{m}{N}\sum_{j=1}^N h^{(1)}(X_j) \xrightarrow{d} N\l(0,\frac{2}{\pi}\r)$, and we conclude that 
$\sqrt{\frac{N}{m}} U_{N,m} \xrightarrow{d} N\l(0,\frac{2}{\pi}\r)$. Recalling \eqref{eq:b01}, we see that
\[
\pr{\sqrt{\frac{N}{m}} U_{N,m} \leq \sqrt{\frac N m} \mb E\rho'_-} \to 1 - \wt\Phi\l( \frac{t}{\sigma}\r),
\]
or $\limsup\limits_{m,N/m\to\infty}\pr{\sqrt{N}\l( \wh\mu_N - \mu\r)\geq t}\leq 1 - \wt\Phi\l( \frac{t}{\sigma}\r)$. 
Repeating the preceding argument for the lower bound for $\pr{\sqrt{N}\l( \wh\mu_N - \mu\r)\geq t}$, we get that $\liminf\limits_{m,N/m\to\infty}\pr{\sqrt{N}\l( \wh\mu_N - \mu\r)\geq t}\geq 1 - \wt\Phi\l( \frac{t}{\sigma}\r)$, whence the claim of the theorem follows.
\end{proof}
Corollary \ref{th:clt-1} implies that asymptotically, the estimator $\wh\mu_N$ improves upon 
$\wh\mu_{\mathrm{MOM}}$. The more interesting, and difficult, question is whether non-asymptotic sub-Gaussian deviation bounds for $\wh\mu_N$ with improved constant can be established, and to understand the range of the deviation parameter in which such bounds are valid.

\section{Deviation inequalities for U-statistics of growing order.}
\label{sec:growing-order}

The ultimate goal of this section is to establish a non-asymptotic analogue of Corollary \ref{th:clt-1}. Recall that its proof relied on the classical strategy of showing that the higher-order terms in the Hoeffding decomposition of certain U-statistics are asymptotically negligible. To prove the desired non-asymptotic extension, one has be able to show that these higher-order terms are sufficiently small with exponentially high probability. However, classical tools used to prove such bounds rely on decoupling inequalities due to \cite{PM-decoupling-1995}. Unfortunately, the constants appearing in decoupling inequalities grow very fast with respect to the order $m$ of U-statistics, at least like $m^m$. As $m$ is allowed to grow with the sample size $N$ in our examples, such tools become insufficient to get the desired bounds in our framework. \citet{MA-ustat} derived an improved version of Bernstein's inequality for non-degenerate U-statistics where the sub-Gaussian deviations regime is controlled by $m\var(\h{1}_m(X))$ defined in equation \eqref{eq:a00}, rather than the larger quantity $\var(h_m)$ appearing in the inequality due to \citet{hoeffding1963probability}; however, this result is only useful when $m$ is essentially fixed. 
\citet{maurer2019bernstein} used different techniques that yield improvements over Arcones' result, in particular with respect to the order $m$; bounds obtained in this work are non-trivial for $m$ up to the order of $N^{1/3}$, however, this does not suffice for the applications required in the present paper. Moreover, unlike Theorem \ref{th:concentration} below, results in \citet{maurer2019bernstein} do not capture the correct behavior of degenerate U-statistics.
Recently, \citet{song2019approximating} made significant progress in studying U-statistics of growing order and developed tools that avoid using decoupling inequalities, however, their techniques apply when $m=o\l(\sqrt{N}\r)$, while we only require that $m=o(N)$. 

We will be interested in U-statistics with kernels of special structure that assumes ``weak'' dependence on each of the individual variables. Let the kernel be centered and written in the form $h_m\l(\frac{x_1}{\sqrt m},\ldots,\frac{x_m}{\sqrt m}\r)$, whence the corresponding U-statistic is
\ben{
\label{eq:U-stat1}
U_{N,m} = \frac{1}{{N\choose m}}\sum_{J\in \m A_{N}^{(m)}} h_m\l( \frac{X_i}{\sqrt{m}}, \ i\in J \r).
}
The Hoeffding decomposition of $U_{N,m}$ is defined as the sum
\ben{
\label{eq:f02}
U_{N,m} = \frac{m}{N}\sum_{j=1}^N h_m^{(1)}(X_j) + \sum_{j=2}^m \frac{{m\choose j} }{{N\choose j}} \sum_{J\in \m A_N^{(j)}} \h{j}_m(X_i,\, i\in J),
}
where $\h{j}_m(x_1,\ldots,x_j)=(\delta_{x_1} - P)\times\ldots\times(\delta_{x_j}-P)\times P^{m-j} h_m$ . We refer the reader to section \ref{sec:tools} where the Hoeffding decomposition and related background material is reviewed in more detail. 

We will assume that $U_{N,m}$ is non-degenerate, in particular, one can expect that the behavior of $U_{N,m}$ is determined by the first term $\frac{m}{N}\sum_{j=1}^N h_m^{(1)}(X_j)$ in the decomposition. 
In order to make this intuition rigorous, we need to prove that the higher-order terms are of smaller order with exponentially high probability. It is shown in the course of the proof of Theorem \ref{th:U-stat} that 
$\var\l(  \frac{{m\choose j} }{{N\choose j}} \sum_{J\in \m A_N^{(j)}} \h{j}_m(X_i,\, i\in J) \r)\leq \var(h_m) \l( \frac{m}{N}\r)^j$. 
However, to achieve our current goal, bounds for the moments of higher order are required.  
More specifically, the key technical difficulty lies in establishing the correct rate of decay of the higher moments with respect to the order $m$ of the U-statistic. We will show that under suitable assumptions, $\mb E^{1/q}\l|  \frac{{m\choose j} }{{N\choose j}} \sum_{J\in \m A_N^{(j)}} \h{j}_m(X_i,\, i\in J)\r|^q = O\l(j^{\eta_1}q^{\eta_2}\l( \frac{m}{N}\r)^{j/2}\r)$ for some $\eta_1>0$, $\eta_2>0$ and for all $q\geq 2$, $2\leq j\leq j_{\mathrm{\max}}$ for a sufficiently large $j_{\mathrm{max}}$. The crucial observation is that the upper bound for the higher-order $L_q$ norms is still proportional to $\l( \frac{m}{N}\r)^{j/2}$, same as the $L_2$ norm.  
The following result, essentially implied by the moment inequalities of this form, is a main technical novelty and a key ingredient needed to control large deviations of the higher order terms in the Hoeffding decomposition. 
\begin{theorem}
\label{th:concentration}
Let
\[
V_{N,j}= \frac{ {m\choose j}^{1/2} }{{N\choose j}^{1/2}} \sum_{J\in \m A_N^{(j)}} \h{j}_m\l(\frac{X_i}{\sqrt m},\, i\in J\r), \  f_j(x_1,\ldots,x_j)=\mb E h_m\l(\frac{x_1}{\sqrt m},\ldots,\frac{x_j}{\sqrt m},\frac{X_{j+1}}{\sqrt m},\ldots,\frac{X_m}{\sqrt m}\r)
\] 
and $\nu_k = \mb E^{1/k} |X_1-\mb EX_1|^k$. 
If the kernel $h_m$ is uniformly bounded, then there exists an absolute constant $c>0$ such that 
\be{
\pr{|V_{N.j}|\geq t} \leq \exp{ -\min\l( \frac{1}{c}\l(\frac{t^2}{\var(h_m)}\r)^{{\frac{1}{j}}},  \frac{\l( \frac{t}{\|h_m\|_\infty} \sqrt{\frac N j}\r)^{\frac{2}{j+1}}}{ c\l(m/j \r)^{\frac{j}{j+1}}} \r)}
}
whenever $\min\l( \frac{1}{c}\l(\frac{t^2}{\var(h_m)}\r)^{{\frac{1}{j}}}, \frac{\l( \frac{t}{\|h_m\|_\infty} \sqrt{\frac N j}\r)^{\frac{2}{j+1}}}{ c\l(m/j \r)^{\frac{j}{j+1}}} \r) \geq 2 $.
Alternatively, suppose that 
\begin{enumerate}
\item[(i)] $\l\| \partial_{x_1}\ldots\partial_{x_j} f_j \r\|_\infty \leq \l(\frac{C_1(P)}{m}\r)^{j/2} j^{\gamma_1 j}$ for some $\gamma_1\geq \frac{1}{2}$;
\item[(ii)] $\nu_k \leq k^{\gamma_2}M$ for all integers $k\geq 2$ and some $\gamma_2\geq 0$, $M>0$.
\end{enumerate}
Then there exist constants $c_1(P),c_2(P)>0$ that depend on $\gamma_1$ and $\gamma_2$ only such that 
\ben{
\label{eq:concentration-2}
\pr{|V_{N,j}|\geq t}
\leq \exp{ -\min\l( \frac{1}{c_1}\l(\frac{t^2}{\var(h_m)}\r)^{{\frac{1}{j}}}, \l( \frac{t\sqrt{N/j}}{\l(c_2 M j^{\gamma_1-1/2}\r)^j}\r)^{\frac{2}{1+j(2\gamma_2+1)}} \r) }
}
whenever $\min\l( \frac{1}{c_1}\l(\frac{t^2}{\var(h_m)}\r)^{{\frac{1}{j}}}, \l( \frac{t\sqrt{N/j}}{\l(c_2 M j^{\gamma_1-1/2}\r)^j}\r)^{\frac{2}{1+j(2\gamma_2+1)}} \r) \geq  \max\l(2, \frac{\log(N/j)}{\gamma_2 j}\r)$. 
\footnote{In the course of the proof, we show that whenever $\gamma_2=0$, corresponding to the case of a.s. bounded $X_1$, inequality \eqref{eq:concentration-2} is valid for all $t>0$.}
\end{theorem}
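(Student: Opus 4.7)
The overall strategy is to establish two sharp $L_q$ moment bounds on $V_{N,j}$, valid for all integers $q \geq 2$, and then convert them to tail bounds via Markov's inequality. Specifically, if one can show $\|V_{N,j}\|_q \leq K q^\alpha$, then optimizing $q$ in $\pr{|V_{N,j}| \geq t} \leq (K q^\alpha / t)^q$ gives $\pr{|V_{N,j}| \geq t} \leq \exp\l(-c(t/K)^{1/\alpha}\r)$. Taking a minimum over two such exponents, the first with $(K_1, \alpha_1) = \l(\sqrt{\var(h_m)}, j/2\r)$ and the second reflecting the heavier tail behavior of $X_1$, reproduces exactly the form stated in the theorem: the first exponent $(t^2/\var(h_m))^{1/j}$ is the characteristic sub-Gaussian deviation of a canonical chaos of degree $j$, while the second accounts for the heavier tails induced by the distribution of $X_1$.

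The sub-Gaussian moment bound exploits the fact that $V_{N,j}$ is a symmetric canonical U-statistic with $\var(V_{N,j}) \leq \var(h_m)$; this variance estimate follows from orthogonality of Hoeffding projections and the identity $\var(h_m) = \sum_{i=1}^m \binom{m}{i}\var(\h{i}_m)$. A hypercontractive moment estimate for canonical U-statistics of degree $j$ then yields $\|V_{N,j}\|_q \leq (cq)^{j/2}\sqrt{\var(h_m)}$. The key technical point is that this estimate must be proved without invoking the Pe\~na--Gin\'e decoupling theorem, whose multiplicative constants of order $j^j$ would be prohibitive; instead one can proceed by induction on $j$, conditioning on one variable at a time and applying a Rosenthal-type estimate to the resulting conditional canonical chaos of order $j-1$.

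The heavier-tail moment bound uses the additional structure provided by hypotheses (i) and (ii) in the smooth case, or by boundedness of $h_m$ in the first half of the theorem. In the smooth case, multilinearity of the Hoeffding projection $\h{j}_m$ allows one to expand $\h{j}_m(x_1/\sqrt m, \ldots, x_j/\sqrt m)$ as a polynomial in the centered inputs $Y_i := X_i - \mb EX_1$; since every strictly lower-order Hoeffding projection vanishes, the dominant contribution is the fully multilinear term $m^{-j/2}(\partial_1\cdots\partial_j f_j)(0)\prod_{i=1}^j Y_i$, whose supremum is controlled by hypothesis (i). The $L_q$ norm of the resulting polynomial chaos in $Y_1, \ldots, Y_N$ is then estimated via Rosenthal's inequality, whose constants scale as $q^{\gamma_2 + 1/2}$ under hypothesis (ii); combining with the combinatorial factor $\binom{N}{j}$ and the normalization $\sqrt{\binom{m}{j}/\binom{N}{j}}$ produces the bound $\|V_{N,j}\|_q \leq (c_2 M j^{\gamma_1 - 1/2})^j q^{(1 + j(2\gamma_2+1))/2}/\sqrt{N/j}$, which matches the second branch. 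The bounded kernel case follows a parallel route, using $|\h{j}_m| \leq 2^j \|h_m\|_\infty$ in place of the Taylor expansion, and produces the slightly different scaling $(m/j)^{j/2}$ in place of $(Mj^{\gamma_1-1/2})^j$.

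The principal obstacle is controlling the $j$-dependence of all constants simultaneously. Standard proofs of degenerate U-statistic moment inequalities rely on Pe\~na--Gin\'e decoupling, which introduces multiplicative factors of order $j^j$; since $j$ is allowed to grow with $N$ in our applications, such losses would nullify the theorem. The workaround is to work directly with the symmetric canonical statistic $V_{N,j}$, tracking only polynomial-in-$j$ constants throughout the induction. A secondary obstacle is control of the Taylor remainder in the expansion of $\h{j}_m$: each additional order in the Taylor series must be shown to carry an extra factor of $1/\sqrt m$ that more than compensates for the accompanying moment cost $\nu_k \leq k^{\gamma_2}M$, so that the remainder sums to at most a constant multiple of the leading multilinear term under the regime $m = o(N)$.
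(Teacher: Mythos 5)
Your high-level strategy is the same as the paper's: establish two $L_q$ moment bounds on $V_{N,j}$ that hold for all $q\geq 2$, then convert each to a tail bound by Markov's inequality and take the minimum, and you are right that the Pe\~na--Gin\'e decoupling constants of order $j^j$ must be avoided. The paper achieves the first (sub-Gaussian chaos) bound by an explicit chain: Sherman's symmetrization inequality for completely degenerate kernels (Fact 2, whose proof does not decouple), then Bonami's hypercontractive inequality for Rademacher chaos (Fact 1), then Hoeffding's representation of the U-statistic as an average over permutations plus Jensen (Fact 3), and finally Rosenthal's inequality (Fact 4) applied to the resulting sum of nonnegative i.i.d.\ variables $W_i$. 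Your "induction on $j$, conditioning on one variable at a time" is a different and rather underspecified route; it is in the right spirit but you should realize that Sherman's lemma already gives the symmetrization you need with only a $2^j$ loss, so a bespoke induction is unnecessary.

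Where there is a genuine gap is in your treatment of the second moment bound. You propose to Taylor-expand $\h{j}_m(x_1/\sqrt m,\ldots,x_j/\sqrt m)$ around the centered mean, isolate the "fully multilinear term" $m^{-j/2}(\partial_1\cdots\partial_j f_j)(0)\prod_i Y_i$, and argue that each higher-order term in the Taylor series carries an extra $m^{-1/2}$. The problem is that hypothesis (i) only controls the single mixed partial $\partial_{x_1}\cdots\partial_{x_j} f_j$ in sup-norm; a Taylor remainder of first order in each of the $j$ variables inevitably involves second- and higher-order derivatives of $f_j$ in each coordinate, which are not bounded by (i) and need not be small. The paper avoids this entirely. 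The key observation (Lemma \ref{lemma:variance}) is that since $\pi_j$ annihilates any function of strictly fewer than $j$ of its arguments, one may subtract off such "slice" functions $f_j|_{x_i=a}$ freely, and after iterating the Newton--Leibniz formula $j$ times one obtains the \emph{exact} representation
\[
(\pi_j h_m)(x_1,\ldots,x_j) = \pi_j\!\left(\int_a^{x_1}\!\!\cdots\!\int_a^{x_j} \partial_{u_j}\cdots\partial_{u_1} f_j(u_1,\ldots,u_j)\,du_j\cdots du_1\right)
\]
with $a=\mb EX_1$, no remainder term. The pointwise bound $|\pi_j h_m|\leq C\|\partial_{u_1}\cdots\partial_{u_j}f_j\|_\infty\prod_i(|X_i-\mb EX_1|+\sigma)$ and hence the moment bound $\mb E|\pi_j h_m|^p \leq C^{pj}\|\partial f_j\|_\infty^p \nu_{p}^{pj}$ then follow immediately. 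Without this exact integral representation your argument would require strengthening hypothesis (i) to control all mixed partials up to the relevant order, which the theorem does not assume.
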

The proof of the theorem is given in section \ref{proof:concentration1}. Let us briefly discuss the imposed conditions. The first inequality requires only boundedness of the kernel and follows from a standard argument; it is mostly useful for the degenerate kernels of higher order $j$, for instance when $j \geq C m/\log(m)$). The main result is the second inequality of the theorem that provides a much better dependence of the tails on $m$ for small and moderate values of $j$. Assumption (ii) is a standard one: for instance, it holds with $\gamma_2=0$ for bounded random variables, $\gamma_2=1/2$ for sub-Gaussian and with $\gamma_2=1$ for sub-exponential random variables. As for assumption (i), suppose that the kernel $h_m$ is sufficiently smooth. In this case, 
\[
\partial_{x_1}\ldots\partial_{x_j} f_j(x_1,\ldots,x_j) = m^{-j/2} \mb E\l[ \l(\partial_{x_1}\ldots\partial_{x_j} h_m\r)\l(\frac{x_1}{\sqrt m},\ldots,\frac{x_j}{\sqrt m}, \frac{X_{j+1}}{\sqrt m},\ldots,\frac{X_m}{\sqrt m}\r)\r],
\] 
which is indeed of order $m^{-j/2}$ with respect to $m$. However, the functions $f_j$ are often smooth even if the kernel $h_m$ is not, as we will show later for the case of an indicator function (specifically, we will prove that required inequalities hold with $\gamma_1=\frac12$ for all $j\ll m/\log(m)$ under mild assumptions on the distribution of $X_1$). 
Next, we state a corollary -- a deviation inequality that takes a particularly simple form and suffices for most of the applications discussed later. It can be viewed as an extension of \citet{MA-ustat} version of Bernstein's inequality for the case of U-statistics of growing order. 
\begin{corollary}
\label{th:bernstein}
Suppose that 
\begin{enumerate}
\item[(i)] assumptions of Theorem \ref{th:concentration} hold for all $2\leq j\leq j_{\mathrm{max}}$ with $\gamma_1=\frac{1}{2}$;
\item[(ii)] the kernel $h_m$ is uniformly bounded;
\item[(iii)] $\liminf_{m\to\infty}\var\l( \sqrt{m}\, h_m^{(1)}(X_1) \r)>0$;
\item[(iv)] $mM^2 = o\l(N^{1-\delta}\r)$ for some $\delta>0$.
\end{enumerate}
Moreover, let $q(N,m)$ be an increasing function such that 
\[
q(N,m)=o\l(\min\l(\l(\frac{N}{mM^2}\r)^{\frac{1}{1+2\gamma_2}},j_{\mathrm{max}}\log(N/m)\r)\r) \text{ as } N/m\to\infty.
\]
Then for all $2\leq t\leq q(N,m)$,
\[
\pr{\l|U_{N,m}\r| \geq \sqrt{\frac{tm}{N}}} \leq 2\exp{-\frac{t}{2(1+o(1))\var\l( \sqrt{m}\, h_m^{(1)}(X_1) \r)}},
\] 
where $o(1)\to 0$ as $N/m\to \infty$ uniformly over $2\leq t\leq q(N,m)$. 
If $m = o\l( \frac{N^{1/2}}{\log(N)} \r)$, we can instead choose $q(N,m)$ such that $q(N,m)=o\l(\min\l(\l(\frac{N}{mM^2}\r)^{\frac{1}{1+2\gamma_2}}, \frac{Nj_{\mathrm{max}}}{m^2} \r)\r)$.
\end{corollary}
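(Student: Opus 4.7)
The strategy is to analyze the Hoeffding decomposition \eqref{eq:f02}: the H\'ajek projection $H:=\frac{m}{N}\sum_{i=1}^N h_m^{(1)}(X_i)$ should carry the full sub-Gaussian tail with the correct constant, while the sum of the degenerate higher-order components is negligible on the scale $\sqrt{tm/N}$ throughout the admissible range of $t$. Rescaling by $\sqrt{N/m}$,
\[
\sqrt{N/m}\;U_{N,m} \;=\; \frac{1}{\sqrt N}\sum_{i=1}^N Y_i \;+\; \sqrt{N/m}\sum_{j=2}^m \sqrt{{m\choose j}\big/{N\choose j}}\;V_{N,j},
\]
where $Y_i := \sqrt m\,h_m^{(1)}(X_i)$ are i.i.d., centered, bounded in absolute value by $\sqrt m\,\|h_m\|_\infty$, with variance $\var(\sqrt m\,h_m^{(1)}(X_1))$ bounded below by a positive constant in view of (iii).

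I would first apply the classical Bernstein inequality to $\frac{1}{\sqrt N}\sum_i Y_i$, which yields a bound of the form
\[
2\exp\!\l(-\frac{(1-\eps)^2 t/2}{\var(\sqrt m\, h_m^{(1)}(X_1))+\tfrac{1}{3}\|h_m\|_\infty\sqrt{tm/N}}\r).
\]
The correction $\|h_m\|_\infty\sqrt{tm/N}$ is $o(1)$: the boundedness of $h_m$ from (ii), together with $t\leq q(N,m)=o((N/(mM^2))^{1/(1+2\gamma_2)})$ and $mM^2=o(N^{1-\delta})$ from (iv), force $tm/N\to 0$. Hence the H\'ajek piece alone already attains the target $\exp(-t/(2(1+o(1))\var(\sqrt m\,h_m^{(1)}(X_1))))$ once $\eps$ is sent to zero at the end.

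To handle the remainder $R:=\sum_{j=2}^m \sqrt{{m\choose j}/{N\choose j}}\,V_{N,j}$, for each $2\leq j\leq m$ I would fix a threshold $u_j := \eps\sqrt t\cdot 2^{-j}\sqrt{{N\choose j}/{m\choose j}}$, chosen so that on the intersection $\bigcap_{j=2}^m\{|V_{N,j}|\leq u_j\}$ one has $\sqrt{N/m}\,|R|\leq \eps\sqrt t$. For $2\leq j\leq j_{\mathrm{max}}$ I would invoke the second (sharp) inequality of Theorem \ref{th:concentration} with $\gamma_1=\tfrac12$; the binding restriction comes from the second argument of the minimum there, and substituting the chosen $u_j$ reduces it to $t\lesssim (N/(mM^2))^{1/(1+2\gamma_2)}$, which is precisely the first term in the definition of $q(N,m)$. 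For $j_{\mathrm{max}}<j\leq m$ only the first (cruder) inequality of Theorem \ref{th:concentration} is available -- it requires solely the boundedness of $h_m$ -- and the second term $j_{\mathrm{max}}\log(N/m)$ in $q(N,m)$ is calibrated so that these tails stay summable in $j$ at an exponential rate dominated by $\exp(-t/(2\var))$. A union bound over $j\in\{2,\ldots,m\}$ then gives $\pr{\sqrt{N/m}\,|R|\geq \eps\sqrt t}=o(\exp(-t/(2\var(\sqrt m\, h_m^{(1)}(X_1)))))$, and combining with the H\'ajek bound and sending $\eps\downarrow 0$ completes the proof.

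The principal obstacle is the bookkeeping in this last step: both arguments of the minimum in Theorem \ref{th:concentration} must simultaneously, for every $j\in[2,m]$, be large enough that the union bound over the higher-order terms does not perturb the leading constant $2\var(\sqrt m\,h_m^{(1)}(X_1))$. The two explicit terms in the definition of $q(N,m)$ arise exactly from this calculation, with the first controlling the sharp regime $j\leq j_{\mathrm{max}}$ through the second inequality of Theorem \ref{th:concentration} and the second controlling the tail regime $j_{\mathrm{max}}<j\leq m$. The alternative scenario $m=o(N^{1/2}/\log N)$ is handled by observing that the prefactors $\sqrt{{m\choose j}/{N\choose j}}\lesssim (2m/N)^{j/2}$ then decay quickly enough that the crude first inequality of Theorem \ref{th:concentration} suffices uniformly in $j>j_{\mathrm{max}}$ without the logarithmic penalty, enlarging the admissible range to $q(N,m)=o(Nj_{\mathrm{max}}/m^2)$.
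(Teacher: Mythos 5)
Your high-level strategy — split $U_{N,m}$ into its H\'ajek projection and the sum of degenerate remainder terms, apply Bernstein to the H\'ajek piece, and control the remainder by a union bound with per-$j$ thresholds combined with Theorem \ref{th:concentration} — is exactly the paper's. The first-order analysis is correct, and the choice of geometrically decaying weights $2^{-j}$ in place of the paper's $j^{-2}$ weights (the $\pi^2/6$ summation) is an equivalent device. However, there is a genuine gap in your treatment of the far tail $j>j_{\mathrm{max}}$, and it is not merely bookkeeping.

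You assert that for $j>j_{\mathrm{max}}$ ``only the first (cruder) inequality of Theorem \ref{th:concentration} is available'' and that the $j_{\mathrm{max}}\log(N/m)$ term in $q(N,m)$ is what makes its tails summable. But that bound is only informative when $m^2 \ll N/\log^2(N)$: its second argument contains the factor $\bigl(Nj/m^2\bigr)^{j/(j+1)}$, which collapses once $m^2\gtrsim N$, so the minimum can stay small and the exponential bound becomes vacuous precisely in the regime the general statement is supposed to cover. The paper instead estimates the entire tail $\sum_{j>j_{\mathrm{max}}}\frac{{m\choose j}}{{N\choose j}}\sum_{J}h_m^{(j)}$ at once via Chebyshev's inequality, exploiting orthogonality of the Hoeffding components to get
\[
\var\Bigl(\sum_{j>j_{\mathrm{max}}}\tfrac{{m\choose j}}{{N\choose j}}\sum_{J}h_m^{(j)}\Bigr)\leq \var(h_m)\,(m/N)^{j_{\mathrm{max}}+1}(1-m/N)^{-1},
\]
which yields the constraint $t\ll j_{\mathrm{max}}\log(N/m)$ with no restriction on $m$; the boundedness-only inequality of Theorem \ref{th:concentration} is relegated to the special case $m=o(\sqrt N/\log N)$, which is exactly your ``alternative scenario.'' A secondary omission: you never verify the applicability condition of Theorem \ref{th:concentration}, namely that $\min(\ldots)\geq\max\bigl(2,\,\log(N/j)/(\gamma_2 j)\bigr)$; the paper introduces a third breakpoint $j_\ast=\min(j_{\mathrm{max}},\lfloor\log(N/m)\rfloor+1)$ precisely because the $\log(N/j)/(\gamma_2 j)$ threshold behaves differently for $j\lesssim\log(N/m)$ versus $j\gtrsim\log(N/m)$, and for the latter range one gets a uniform lower bound $\asymp (N/(mM^2))^{1/(1+2\gamma_2)}$ that trivializes the check. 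Your two-way split leaves this unaddressed. (There is also a minor dimensional slip: your threshold $u_j$ is missing a $\sqrt{m/N}$ factor, since $\sqrt{N/m}\,\lvert R\rvert\leq\eps\sqrt t$ requires $u_j=\eps\sqrt t\,2^{-j}\sqrt{m/N}\,\sqrt{{N\choose j}/{m\choose j}}$.)
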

\begin{remark}
The key point of the inequality is that the sub-Gaussian deviations are controlled by $\var\l( \sqrt{m}\, h_m^{(1)}(X_1) \r)$ rather than the sub-optimal quantity $\var(h_m)$ appearing in Hoeffding's version of Bernstein's inequality for U-statistics. Moreover, the range in which $U_{N,m}$ admits sub-Gaussian deviations is much wider compared to the implications of Arcones' inequality when $m$ is allowed to grow with $N$. Several comments regarding the additional assumptions are in order:
\begin{enumerate}
\item Assumption of uniform boundedness of the kernel $h_m$ is needed to ensure that we can apply Bernstein's concentration inequality to the first term of the Hoeffding decomposition. This suffices for our purposes but in general this condition can be relaxed.
\item Assumption on the asymptotic behavior of the variance is made to simplify the statement and the proof; if it does not hold, the result is still valid once the definition of $q(N,m)$ is modified to reflect the different behavior of the this quantity. 
We include the following heuristic argument which shows that $\lim_{m\to\infty}\var\l( \sqrt{m}\, h_m^{(1)}(X_1) \r)$ often admits a simple closed-form expression. Indeed, note that $\sqrt{m}\l( h_m^{(1)}(X_1) - h_m^{(1)}(0)\r) = \int_0^{X_1} \sqrt{m}\,\partial_{u} h_m^{(1)}(u) du$. If $\l\|\partial^2_{u} h_m^{(1)}\r\|_\infty = o(m^{-1/2})$, then 
\be{
\sqrt{m} \l| \partial_{u} h_m^{(1)}(u) - \partial_{u} h_m^{(1)}(0) \r| 
\leq \sqrt{m} \l\| \partial^2_{u} h_m^{(1)}\r\|_\infty u \to 0
} 
pointwise as $m\to\infty$. If the limit $\sqrt{m}\,\partial_{u} h_m^{(1)}(0)$ exists, then 
$
\sqrt{m}\l( h_m^{(1)}(X_1) - h_m^{(1)}(0)\r) \to \lim_{m\to\infty} \sqrt m \,\partial_u h_m^{(1)}(0) X_1
$, P-almost everywhere. Moreover, as $\sqrt m\| \partial_{u} h_m^{(1)}\|_\infty$ admits an upper bound independent of $m$ by assumption (i) of Theorem \ref{th:concentration} and $X_1$ is sufficiently integrable, Lebesgue's dominated convergence theorem applies and yields that $\var\l( \sqrt{m}\, h_m^{(1)}(X_1)\r) \to \l(\lim_{m\to\infty} \partial_u h_m^{(1)}(0)\r)^2 \var(X_1)$. For instance, this heuristic argument can often be made precise for kernels of the form $h\l( \sum_{j=1}^m \frac{x_j}{\sqrt m}\r)$. 
\item Finally, condition requiring that $mM^2 = o\l(N^{1-\delta}\r)$ is used to ensure that $\l(\frac{N}{mM^2}\r)^\tau \gg \log(m)$ for any fixed $\tau>0$ which simplifies the statement and the proof. 
\end{enumerate}
\end{remark}

\begin{proof}
The union bound together with Hoeffding's decomposition entails that for any $t>0$ and $0<\eps<1$ (to be chosen later), 
\ml{
\pr{\l|U_{N,m}\r| \geq \sqrt{\frac{tm}{N}}} 
\\
\leq \pr{\l| \frac{m}{N}\sum_{j=1}^N h_m^{(1)}(X_j) \r|\geq (1-\eps) \sqrt t\sqrt{\frac{m}{N}}}
+ \pr{\l| \sum_{j=2}^m \frac{{m\choose j} }{{N\choose j}} \sum_{J\in \m A_N^{(j)}} \h{j}_m(X_i,\, i\in J) \r| \geq \eps\sqrt t\sqrt{\frac{m}{N}}}.
}
Bernstein's inequality yields that
\ml{
\pr{\l| \frac{m}{N}\sum_{j=1}^N h_m^{(1)}(X_j) \r|\geq (1-\eps)\sqrt t\sqrt{\frac{m}{N}}} 
\\
\leq 2\exp{-\frac{(1-\eps)^2 \,t/2}{ \var\l( \sqrt{m}\, h_m^{(1)}(X_1) \r) + (1-\eps)\frac13\sqrt{\frac{m}{N}}\|h_m\|_\infty t^{1/2}}} 
\\
=2\exp{-\frac{(1-\eps)^2\, t}{2\,\var\l( \sqrt{m}\, h_m^{(1)}(X_1) \r)(1+o(1))}}
}
where $o(1)\to 0$ as $N/m\to\infty$ uniformly over $s\leq q(N/m)$. 
It remains to control the expression involving higher order Hoeffding decomposition terms: specifically, we will show that under our assumptions, it is bounded from above by $\exp{-\frac{t}{2\,\var\l( \sqrt{m}\, h_m^{(1)}(X_1) \r)}} \cdot o(1)$ where $o(1)\to 0$ uniformly over the range of $t$. 
To this end, denote $t_\eps:=\eps^2 t$ and $j_\ast:=\min\l( j_{\mathrm{max}}, \lfloor \log(N/m) \rfloor +1\r)$. Observe that 
\mln{
\label{eq:e01}
\pr{\l| \sum_{j=2}^m \frac{{m\choose j} }{{N\choose j}} \sum_{J\in \m A_N^{(j)}} \h{j}_m(X_i,\, i\in J) \r| \geq \sqrt{t_\eps}\sqrt{\frac{m}{N}}} 
\\ \leq 
\pr{\l| \sum_{j=2}^{j_\ast} \frac{{m\choose j} }{{N\choose j}} \sum_{J\in \m A_N^{(j)}} \h{j}_m(X_i,\, i\in J) \r| \geq \frac{\sqrt{t_\eps}}{3}\sqrt{\frac{m}{N}}}
\\
+\pr{\l| \sum_{j=j_\ast+1}^{j_{\mathrm{max}}} \frac{{m\choose j} }{{N\choose j}} \sum_{J\in \m A_N^{(j)}} \h{j}_m(X_i,\, i\in J) \r| \geq \frac{\sqrt{t_\eps}}{3}\sqrt{\frac{m}{N}}}
\\
+\pr{\l| \sum_{j>j_{\mathrm{max}}} \frac{{m\choose j} }{{N\choose j}} \sum_{J\in \m A_N^{(j)}} \h{j}_m(X_i,\, i\in J) \r| \geq \frac{\sqrt{t_\eps}}{3}\sqrt{\frac{m}{N}}},
}
where the second sum may be empty depending on the value of $j_\ast$. 
First, we estimate the last term using Chebyshev's inequality: repeating the reasoning leading to equation \eqref{eq:a12} in the proof of Theorem \ref{th:U-stat}, we see that 
$\var\l(  \sum_{j>j_{\mathrm{max}}} \frac{{m\choose j} }{{N\choose j}} \sum_{J\in \m A_N^{(j)}} \h{j}_m(X_i,\, i\in J) \r) \leq \var(h_m)\l(\frac{m}{N}\r)^{ j_{\mathrm{max}}+1 } \l(1-m/N\r)^{-1}$, hence 
\ml{
\pr{\l| \sum_{j>j_{\mathrm{max}}} \frac{{m\choose j} }{{N\choose j}} \sum_{J\in \m A_N^{(j)}} \h{j}_m(X_i,\, i\in J) \r| \geq \frac{\sqrt{t_\eps}}{3}\sqrt{\frac{m}{N}}} \leq \frac{18\var(h_m)}{t_\eps}\l(\frac{m}{N}\r)^{ j_{\mathrm{max}}} 
\\
= 18\var(h_m) \exp{- j_{\mathrm{max}}\log(N/m) + \log(t_\eps) }
}
whenever $N/m\geq 2$. 
Alternatively, we can apply the first inequality of Theorem \ref{th:concentration} instead of Chebyshev's inequality to each term corresponding to $j>j_{\mathrm{max}}$ individually, with $t = t_{j,\eps}:= \frac{\sqrt{t_\eps}}{3 j^2} \l( \frac{N}{m} \r)^{\frac{j-1}{2}}$. It implies that 
\ml{
\pr{\l| \sum_{j>j_{\mathrm{max}}} \frac{{m\choose j} }{{N\choose j}} \sum_{J\in \m A_N^{(j)}} \h{j}_m(X_i,\, i\in J) \r| \geq \frac{\sqrt{t_\eps}}{3}\sqrt{\frac{m}{N}}}
\\ 
\leq \sum_{j>j_{\mathrm{max}}} \pr{\l| \frac{{m\choose j} }{{N\choose j}} \sum_{J\in \m A_N^{(j)}} \h{j}_m(X_i,\, i\in J) \r| \geq \frac{\sqrt{t_{j,\eps}}}{3}\sqrt{\frac{m}{N}}  }
\\
\leq m \max_{j>j_{\mathrm{max}}} \exp{-c\min\l(t_\eps^{1/j}\l( \frac Nm\r)^{\frac{j-1}{j}},\l( \frac{t_\eps}{\|h\|_\infty^2}\r)^{\frac{1}{j+1}}  \l( \frac{Nj}{m^2} \r)^{\frac{j}{j+1}} \r)}.
}
This bound is useful when $\l( \frac{Nj_{\mathrm{max}}}{m^2} \r)^{\frac{j_{\mathrm{max}}}{j_{\mathrm{max}}+1}} \gg j_{\mathrm{max}} \log(N/m)$, which is true whenever $m^2 \ll \frac{N}{\log^2(N)}$. If moreover $\eps \gg \frac{1}{\sqrt{\log(N)}}$, then the last probability is bounded from above by 
\[
\max_{j>j_{\mathrm{max}}} \exp{-c'\min\l(t_\eps^{1/j}\l( \frac Nm\r)^{\frac{j-1}{j}},\l( \frac{t_\eps}{\|h\|_\infty^2}\r)^{\frac{1}{j+1}}  \l( \frac{Nj}{m^2} \r)^{\frac{j}{j+1}} \r)}.
\]

To estimate the middle term (the probability involving the terms indexed by $j_\ast+1\leq j \leq j_{\mathrm{max}}$), we apply Theorem \ref{th:concentration} to each term individually for $t = t_{j,\eps}:= \frac{\sqrt{t_\eps}}{3 j^2} \l( \frac{N}{m} \r)^{\frac{j-1}{2}}$, keeping in mind that $\sum_{j\geq j_\ast+1} t_{j,\eps}\leq \frac{\pi^2}{18} \l( \frac{N}{m} \r)^{\frac{j-1}{2}}\sqrt{t_\eps}$. Note that for any $2\leq t \leq \frac{N}{m}$, $\eps > \frac{m}{N}$ and $j\geq \lfloor \log(N/m)\rfloor + 1$, 
\[
\min\l( \frac{t_{j,\eps}^{\frac{2}{j}}}{c}, \l( \frac{t_{j,\eps}\sqrt{N/j}}{\l(c M j^{\gamma_1-1/2}\r)^j}\r)^{\frac{2}{1+j(2\gamma_2+1)}} \r) \geq \frac{c_1}{M^{\frac{2}{1+2\gamma_2}}} \l( \frac{N}{m}\r)^{\frac{1}{1+2\gamma_2}},
\] 
whence 
\ml{
\pr{\l| \sum_{j=j_\ast+1}^{j_{\mathrm{max}}} \frac{{m\choose j} }{{N\choose j}} \sum_{J\in \m A_N^{(j)}} \h{j}_m(X_i,\, i\in J) \r| \geq \frac{\sqrt{t_\eps}}{3}\sqrt{\frac{m}{N}}} 
\\
\leq j_{\mathrm{max}} \exp{ -\frac{c_1}{M^{\frac{2}{1+2\gamma_2}}} \l( \frac{N}{m}\r)^{\frac{1}{1+2\gamma_2}} } 
\leq \exp{ -\frac{c_2}{M^{\frac{2}{1+2\gamma_2}}}\l( \frac{N}{m}\r)^{\frac{1}{1+2\gamma_2}} }.
}
Finally, to estimate the first term in the right side of inequality \eqref{eq:e01}, we again apply Theorem \ref{th:concentration}. With $t_{j,\eps}$ defined as above, 
\ml{
\pr{\l| \sum_{j=2}^{j_\ast} \frac{{m\choose j} }{{N\choose j}} \sum_{J\in \m A_N^{(j)}} \h{j}_m(X_i,\, i\in J) \r| \geq \frac{\sqrt{t_\eps}}{3}\sqrt{\frac{m}{N}}}
\\
\leq \sum_{j=2}^{j_\ast} \pr{\l| \frac{{m\choose j} }{{N\choose j}} \sum_{J\in \m A_N^{(j)}} \h{j}_m(X_i,\, i\in J) \r| \geq \frac{6}{\pi^2}\frac{\sqrt{t_{j,\eps}}}{3}\sqrt{\frac{m}{N}}}
\\
\leq  \sum_{j=2}^{j_\ast} \exp{- c\min\l(t^{1/j}_{\eps} \l( \frac N m\r)^{\frac{j-1}{j}}, \frac{t_{\eps}^{\frac{1}{1+j(1+2\gamma_2)}}}{M^{\frac{2j}{1+j(1+2\gamma_2)}}} \l( \frac N m\r)^{\frac{j}{1+j(1+2\gamma_2)}} \r)}
\\
\leq j_\ast \max_{2\leq j\leq j_\ast} \exp{- c\min\l(t^{1/j}_{\eps} \l( \frac N m\r)^{\frac{j-1}{j}}, \frac{t_{\eps}^{\frac{1}{1+j(1+2\gamma_2)}}}{M^{\frac{2j}{1+j(1+2\gamma_2)}}} \l( \frac N m\r)^{\frac{j}{1+j(1+2\gamma_2)}} \r)}.
}  
Whenever $\eps\geq \frac{1}{\sqrt{N/m)}}$, the last expression is upper bounded by 
\be{
\max_{2\leq j\leq j_\ast} \exp{- c_3\min\l(t^{1/j}_{\eps} \l( \frac N m\r)^{\frac{j-1}{j}}, \frac{t_{\eps}^{\frac{1}{1+j(1+2\gamma_2)}}}{M^{\frac{2j}{1+j(1+2\gamma_2)}}} \l( \frac N m\r)^{\frac{j}{1+j(1+2\gamma_2)}} \r)}
} 
for  $c_3$ small enough. Combining all the estimates, we obtain the inequality
\mln{
\label{eq:base-1}
\pr{\l| \sum_{j=2}^m \frac{{m\choose j} }{{N\choose j}} \sum_{J\in \m A_N^{(j)}} \h{j}_m(X_i,\, i\in J) \r| \geq \sqrt{t_\eps}\sqrt{\frac{m}{N}}} 
\leq 
\\  \max_{2\leq j\leq j_\ast} \exp{- c_3\min\l(t^{1/j}_{\eps} \l( \frac N m\r)^{\frac{j-1}{j}}, t_{\eps}^{\frac{1}{1+j(1+2\gamma_2)}} \l( \frac{N}{mM^2}\r)^{\frac{j}{1+j(1+2\gamma_2)}} \r)}
\\
+ \exp{ -c_2\l( \frac{N}{mM^2}\r)^{\frac{1}{1+2\gamma_2}} } + c_4\var(h_m) \exp{- j_{\mathrm{max}}\log(N/m) + \log(t_\eps) }
}
that holds if $\eps\geq \frac{1}{\sqrt{N/m}}$ and $2\leq t\leq \frac{N}{m}$. 
If $t<\l( \frac{N}{mM^2}\r)^{\frac{1}{1+2\gamma_2}} \eps^4$, then the first two terms on the right-hand side of the previous display are bounded by $e^{-\frac{ct_{\eps}}{\eps^3} } = e^{-\frac{ct}{\eps}}$ each, and if $t<\eps (j_{\mathrm{max}}-1) \log(N/m)$, the same is true for the last term. Therefore, 
if 
\[
t<\eps^4\min\l(  \l( \frac{N}{mM^2}\r)^{\frac{1}{1+2\gamma_2}}\,,(j_{\mathrm{max}}-1) \log(N/m)\r),
\] 
then 
\ml{
\pr{\l| \sum_{j=2}^m \frac{{m\choose j} }{{N\choose j}} \sum_{J\in \m A_N^{(j)}} \h{j}_m(X_i,\, i\in J) \r| \geq \sqrt{t_\eps}\sqrt{\frac{m}{N}}}
\\
\leq 3 \exp{-\frac{ct}{\eps}} = \exp{-\frac{t}{2\,\var\l( \sqrt{m}\, h_m^{(1)}(X_1) \r)}} \cdot o(1)
}
where the last equality holds whenever we choose $\eps:=\eps(N,m)$ such that $\eps(N,m)\to 0$ as $N/m\to\infty$. Specifically, take $\eps = \l( \frac{q(N,m)}{\min\l(  \l( \frac{N}{mM^2}\r)^{\frac{1}{1+2\gamma_2}}\,,j_{\mathrm{max}}\log(N/m)\r)} \r)^{1/4} $ where the function $q(N,m)$ was defined in the statement of the corollary, and conclusion follows immediately. 
If $m^2\ll \frac{N}{\log^2(N)}$, we can replace the last term in equation \eqref{eq:base-1} by 
\[
\max_{j>j_{\mathrm{max}}} \exp{-c'\min\l(t_\eps^{1/j}\l( \frac Nm\r)^{\frac{j-1}{j}},\l( \frac{t_\eps}{\|h\|_\infty^2}\r)^{\frac{1}{j+1}}  \l( \frac{Nj}{m^2} \r)^{\frac{j}{j+1}} \r)},
\]
which is bounded by $e^{-\frac{ct}{\eps}}$ whenever $t<\frac{Nj_{\mathrm{max}}}{m^2} \eps^4$. Final result in this case follows similarly.
\end{proof}

\section{Implications for the median of means estimator.}

We are going to apply results of the previous section to deduce non-asymptotic bounds for the permutation-invariant version of the median of means estimator. Recall that it was defined as 
\be{
\wh\mu_{N} : =\med{\bar X_J, \ J\in \m A_{N}^{(m)}}.
}
\begin{theorem}
\label{th:U-mom}
Assume that $X_1,\ldots,X_N$ are i.i.d. copies of a random variable $X$ with mean $\mu$ and variance $\sigma^2$. Moreover, suppose that 
\begin{enumerate}
\item[(i)]  the distribution of $X_1$ is absolutely continuous with respect to the Lebesgue measure on $\mb R$ with density $\phi_1$;
\item[(ii)] the Fourier transform $\wh \phi_1$ of the density satisfies the inequality $\l| \wh\phi_1(x)\r| \leq \frac{C_1}{(1+|x|)^\delta}$ for some positive constants $C_1$ and $\delta$; 
\item[(iii)] $\mb E \l|(X_1-\mu)/\sigma \r|^{q}<\infty$ for some $\frac{3+\sqrt{5}}{2}<q \leq 3$;
\end{enumerate}
Then the estimator $\wh\mu_{N}$ satisfies
\[
\pr{\l|\sqrt{N}(\wh\mu - \mu)\r|\geq \sigma\sqrt t} \leq 2\exp{-\frac{t}{2(1+o(1))}}
\]
where $o(1)\to 0$ as $m, \,N/m\to\infty$ uniformly for all $t\in \l[ l_{N,m},u_{N,m} \r]$ for any sequences $\{ l_{N,m}\}\,,\{u_{N,m}\}$ such that $l_{N,m}\gg \frac{N}{m^{q-1}}$ and $u_{N,m} \ll \frac{N}{m^{\frac{q}{q-1}} \vee m\log^2(N)}$.
\end{theorem}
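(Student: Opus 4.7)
The plan is to follow the strategy used in the proof of Corollary \ref{th:clt-1}, reducing the deviation bound for $\wh\mu_N$ to a tail bound for a U-statistic, and then applying Corollary \ref{th:bernstein}. The first-order optimality condition for $\wh\mu_N \in \argmin_{z\in\mb R} \sum_{J\in\m A_N^{(m)}} \rho(\sqrt m(\bar X_J - z))$ with $\rho(x)=|x|$ gives, as in \eqref{eq:c00},
\be{
\pr{\sqrt N (\wh\mu_N - \mu) \geq \sigma\sqrt t} \leq \pr{\sqrt{N/m}\,U_{N,m} \geq -\sqrt{N/m}\,\mb E\rho'_-},
}
where $U_{N,m}$ is the centered U-statistic with uniformly bounded kernel $h_m(x_1,\ldots,x_m)=\rho'_-\!\l(\frac{1}{\sqrt m}\sum_{i=1}^m (x_i-\mu) - \sigma\sqrt{tm/N}\r)$. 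Repeating the Berry-Esseen computation from the proof of Theorem \ref{th:dev-1} would yield
\be{
-\sqrt{N/m}\,\mb E\rho'_- = \sqrt{2/\pi}\,\sqrt t\,(1 + o(1))
}
whenever $tm/N \to 0$ and $(N/m)\,g^2(m)/t \to 0$. Under assumption (iii), the second of these constraints reduces to $t\gg N/m^{q-1}$, which is exactly the lower threshold $l_{N,m}$.

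The proof of Corollary \ref{th:clt-1} already shows that $\var\l(\sqrt m\, h_m^{(1)}(X_1)\r)\to 2/\pi$ as $m,N/m\to\infty$. Applying Corollary \ref{th:bernstein} to $U_{N,m}$ with parameter $s=(2/\pi)t$ (so that $\sqrt{sm/N} = \sqrt{2/\pi}\sqrt t\,\sqrt{m/N}$) then gives
\be{
\pr{\sqrt{N/m}\,U_{N,m} \geq \sqrt{2/\pi}\sqrt t\,(1+o(1))} \leq 2\exp\!\l(-\frac{(2/\pi)t}{2(1+o(1))(2/\pi)}\r) = 2\exp\!\l(-\frac{t}{2(1+o(1))}\r).
}
A matching lower-tail bound follows by a symmetric argument, and combining the two yields the conclusion of the theorem.

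The main technical obstacle is the verification of the smoothness assumption (i) of Theorem \ref{th:concentration} with $\gamma_1=1/2$. For
\be{
f_j(x_1,\ldots,x_j) = \mb E\, h_m\!\l(\frac{x_1}{\sqrt m},\ldots,\frac{x_j}{\sqrt m}, \frac{X_{j+1}}{\sqrt m},\ldots,\frac{X_m}{\sqrt m}\r) = 1 - 2 F_{m-j}\!\l(\sigma\sqrt{tm/N} - \frac{1}{\sqrt m}\sum_{i=1}^j x_i\r),
}
where $F_{m-j}$ denotes the distribution function of $Y=\frac{1}{\sqrt m}\sum_{i=j+1}^m (X_i-\mu)$, we have $\partial_{x_1}\cdots\partial_{x_j} f_j = \pm 2\, m^{-j/2}\, f_Y^{(j-1)}(\cdot)$, so the task reduces to bounding $\|f_Y^{(k)}\|_\infty \leq C^k k^{k/2}$ for $k\leq j_{\mathrm{max}}-1$. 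Fourier inversion yields
\be{
\|f_Y^{(k)}\|_\infty \leq \frac{m^{(k+1)/2}}{2\pi}\int_\mb R |u|^k\,|\wh\phi_1(u)|^{m-j}\,du,
}
and splitting the integral at $|u|\asymp R$ for an appropriate constant $R$ allows us to use (a) the quadratic Taylor expansion $|\wh\phi_1(u)|\leq e^{-cu^2}$ for small $|u|$ (valid under finite variance) to produce a Gaussian integral of order $m^{-(k+1)/2}\,k^{k/2}$ on the central region, exactly cancelling the prefactor, and (b) the assumed polynomial decay $|\wh\phi_1(u)|\leq C_1/(1+|u|)^\delta$ on the tail region, whose contribution is negligible as long as $k\ll \delta(m-j)$, i.e., for any $k\leq j_{\mathrm{max}}\ll m$.

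It remains to satisfy assumption (ii) of Theorem \ref{th:concentration}, which requires $\nu_k\leq k^{\gamma_2} M$ for all $k\geq 2$, while our assumption only gives $\mb E|X_1-\mu|^q<\infty$. I would truncate the data at a level $T$ of order $m^{1/(2(q-1))}$, so that the bounded surrogate satisfies assumption (ii) with $\gamma_2=0$ and $M\asymp T$. With this choice, the range $t\ll N/(mM^2)=N/m^{q/(q-1)}$ guaranteed by Corollary \ref{th:bernstein} contains $u_{N,m}$, while the truncation bias is controlled via the robustness of the median to block-level contamination combined with assumption (iii); this interplay between the truncation level, the truncation bias, and the applicability range is the most delicate step of the argument. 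Finally, the range $[l_{N,m},u_{N,m}]$ is nonempty precisely when $N/m^{q-1}\ll N/m^{q/(q-1)}$, i.e., when $q/(q-1)<q-1$, which simplifies to $q^2-3q+1>0$ and forces $q>(3+\sqrt 5)/2$.
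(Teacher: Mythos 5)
Your high-level plan matches the paper's: reduce to a U-statistic tail bound via the first-order condition, Taylorize the centering term via a Berry--Esseen argument, and invoke Corollary~\ref{th:bernstein}, with the smoothness condition (i) of Theorem~\ref{th:concentration} verified by the Fourier-inversion bound and the moment condition (ii) obtained by truncation at level $\asymp m^{1/(2(q-1))}$. You also correctly derive the $(3+\sqrt 5)/2$ threshold from the requirement that the range be nonempty. However, the step you yourself flag as ``the most delicate'' is where the argument has a genuine gap, for two reasons.

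First, you verify assumption (i) of Theorem~\ref{th:concentration} for the \emph{original} $X$'s using the hypothesis $|\wh\phi_1(u)|\leq C_1/(1+|u|)^\delta$, but you invoke the corollary after truncating to satisfy assumption (ii). These must both hold for the \emph{same} (truncated) variables. A hard truncation such as conditioning on $\{|X-\mu|\leq T\}$ or clipping produces a density with jump discontinuities or atoms, and its characteristic function need not inherit the assumed decay (with an atom it does not even vanish at infinity). The paper resolves this by constructing $Y^{(R)}$ with density $\psi = C_2\,\phi_1(\cdot+\mu)\chi_R$, where $\chi_R$ is a \emph{mollified} indicator; Lemma~\ref{lemma:truncation} then shows $|\wh\psi(x)|\leq C/(1+|x|)^\delta$ is preserved, which is exactly what makes the $\gamma_1=1/2$ bound (Lemma~\ref{lemma:deriv-bound}) applicable to the truncated data.

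Second, the mechanism you offer for the truncation bias --- ``robustness of the median to block-level contamination'' --- does not work at the scale required. With $R\asymp m^{1/(2(q-1))}$ and $q>2$, the per-observation truncation probability is $p\asymp m^{-q/(2(q-1))}$, so $mp\asymp m^{(q-2)/(2(q-1))}\to\infty$: asymptotically almost every block of size $m$ contains at least one truncated point, so one cannot argue that only a vanishing fraction of block averages is contaminated. The paper instead couples $Y_j^{(R)}$ to $X_j$ (they agree on $\{|X_j-\mu|\leq R\}$), decomposes the U-statistic as the sum of the $Y$-version plus a remainder (display~\eqref{eq:f02}), and shows the remainder is negligible by bounding
\be{
\Sigma_m^2 := \mb E\Bigl(\rho_-'\bigl(\sqrt m(\bar X_{[m]}-\mu-\sqrt{t/N})\bigr) - \rho_-'\bigl(\sqrt m(\bar Y^{(R)}_{[m]}-\sqrt{t/N})\bigr)\Bigr)^2 = o(1),
}
which exploits the fact that the sign kernel can only change when \emph{both} block sums are within $O(a)$ of the threshold or when the truncated part shifts the block sum by more than $a$; anti-concentration plus a Chebyshev bound on the tail part give $\Sigma_m^2\lesssim a + g(m) + R^{2-q}/a^2 = o(1)$ for a good choice of $a$. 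This, followed by Hoeffding's Bernstein inequality for U-statistics, gives the $e^{-t}\cdot o(1)$ remainder in~\eqref{eq:U-bdd}. Without some version of this coupling-and-variance argument, the transfer from $X$ to a bounded surrogate is not justified, and that is precisely what is missing from your sketch.
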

\begin{remark}
\begin{enumerate}
\item Let us recall the Riemann-Lebesgue lemma stating that $|\wh\phi_1(x)|\to 0$ as $|x|\to\infty$ for any absolutely continuous distribution, so assumption (ii) is rather mild;
\item The inequality $q>\frac{3+\sqrt{5}}{2}$ assures that $l_{N,m}$ and $u_{N,m}$ can be chosen such that $l_{N,m}\ll u_{N,m}$.
\end{enumerate}
\end{remark}
\begin{proof}
Throughout the course of the proof, we will assume without loss of generality that $\sigma^2=1$; general case follows by rescaling. Let us also recall that all asymptotic relations are defined in the limit as both $m$ and $N/m\to\infty$. 
Note that direct application of Corollary \ref{th:bernstein} requires existence of all moments of $X_1$, which is too prohibitive. Therefore, we will first show how to reduce the problem to the case of bounded random variables. 
Specifically, we want to truncate $X_j-\mu, \ j=1,\ldots,N$ in a way that preserves the decay rate of the characteristic function. To this end, let $R$ be a large constant (that will later be specified as an increasing function of $m$), and define the standard mollifier $\kappa(x)$ via 
$\kappa(x)=\begin{cases}
C_1\exp{-\frac{1}{1-x^2}}, & |x|<1, \\
0, & |x|\geq 1
\end{cases}$ where $C_1$ is chosen so that $\int_\mb R \kappa(x) = 1$. Moreover, let $\chi_R(x) = \l( I_{2R}\ast \kappa_R \r)(x)$ be the smooth approximation of the indicator function of the interval $[-2R,2R]$, where $I_{2R}(x) = I\{ |x|\leq 2R\}$ and $\kappa_R(x) = \frac{1}{R}\kappa(x/R)$; in particular, $\chi_R(x) = 1$ for $|x|\leq R$ and $\chi_R(x)=0$ for $|x|\geq 3R$. Set 
\[
\psi(x) = C_2 \phi_1(x+\mu) \chi_R(x)
\] 
where $C_2>0$ is such that $\int_\mb R \psi(x)dx = 1$. 
Suppose that $Y^{(R)}$ has distribution with density $\psi$ and note that by construction the laws of $X_1-\mu$ and $Y^{(R)}$, conditionally on the events $\{|X_1-\mu|\leq R\}$ and $\{|Y^{(R)}|\leq R\}$ respectively, coincide. Therefore, there exists a random variable $Z$ independent from $X_1$ such that 
\ben{
\label{eq:Y1}
Y_1^{(R)} := \begin{cases} X_1-\mu, & |X_1-\mu|\leq R, \\
Z, & |X_1-\mu|>R
\end{cases}
}
also has density $\psi$. Observe the following properties of $Y_1^{(R)}$: (a) $|Y_1^{(R)}|\leq 3R$ almost surely; (b) $\mb E h\l(Y_1^{(R)}\r)\leq C_2\mb Eh\l(X_1-\mu\r)$ for any nonnegative function $h$ -- indeed, this follows from the inequality $\psi(x)\leq C_2\phi_1(x+\mu)$; (c) $\l| \mb E Y_1^{(R)}\r|\leq (1+C_2)\frac{\mb E |X_1-\mu|^q I\{ |X_1-\mu |>R\}}{R^{q-1}}$. Indeed, 
\ml{
\l| \mb E Y_1^{(R)}\r| = \l| \mb E Y_1^{(R)} I\{ |X_1-\mu| \leq R \} + \mb E Y_1^{(R)} I\{ |X_1-\mu| > R \} \r| 
\\
= \l| \mb E (\mu - X_1)I\{ |X_1-\mu| > R \} + \mb E Y_1^{(R)} I\{ |Y_1^{(R)}| > R \}\r| 
\\
\leq \mb E \l| X_1-\mu \r| I\{ |X_1-\mu| > R \} + C_2 \mb E \l| X_1-\mu \r| I\{ |X_1-\mu| > R \}
}
where the last bound follows from property (b) for $h(x)=|x| I\{|x|>R\}$. It remains to apply H\"{o}lder's and Markov's inequalities. The final property of $Y_1^{(R)}$ is stated in a lemma below and is proven in the appendix.
\begin{lemma}
\label{lemma:truncation}
The characteristic function $\wh\psi(x)$ of $Y_1^{(R)}$ satisfies 
\[
\l| \wh\psi_1(x)\r|\leq \frac{C}{(1+|x|)^\delta}
\]
for all $x\in \mb R$ and a sufficiently large constant $C$.
\end{lemma}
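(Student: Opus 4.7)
The plan is to pass to Fourier space and exploit the product structure $\psi(x)=C_2\,\phi_1(x+\mu)\chi_R(x)$. By the convolution theorem, $\wh\psi$ is (up to a constant from the Fourier convention) the convolution of $\widehat{\phi_1(\cdot+\mu)}$ with $\wh\chi_R$. Since shifting $\phi_1$ by $\mu$ contributes only a unimodular phase, $\l|\widehat{\phi_1(\cdot+\mu)}(\xi)\r|=|\wh\phi_1(\xi)|\leq C_1(1+|\xi|)^{-\delta}$ by assumption (ii) of Theorem \ref{th:U-mom}. The task therefore reduces to bounding $(|\wh\phi_1|\ast|\wh\chi_R|)(\xi)$ from above by $C(1+|\xi|)^{-\delta}$ with a constant $C$ independent of $R$.

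The next step is to analyze $\wh\chi_R$ carefully. Using $\chi_R=I_{2R}\ast\kappa_R$, the factorization $\wh\chi_R(\xi)=\wh{I_{2R}}(\xi)\,\wh{\kappa_R}(\xi)=\frac{2\sin(2R\xi)}{\xi}\,\wh\kappa(R\xi)$ is immediate. Since $\kappa$ is $C^\infty$ with compact support, $\wh\kappa$ is a Schwartz function, and a change of variables yields two uniform-in-$R$ estimates that I would record as the workhorse of the argument: a bounded $L^1$ norm $\|\wh\chi_R\|_{L^1(\mb R)}\leq C$, together with the tail estimate $|\wh\chi_R(\eta)|\leq C_M\,|\eta|^{-1}(R|\eta|)^{-M}$ for every integer $M\geq 1$ whenever $R|\eta|\geq 1$.

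Armed with these estimates I would split the convolution integral at $|\eta|=|\xi|/2$. On the region $|\eta|\leq|\xi|/2$, the inequality $|\xi-\eta|\geq|\xi|/2$ allows us to pull out $|\wh\phi_1(\xi-\eta)|\leq 2^\delta C_1(1+|\xi|)^{-\delta}$ and invoke the uniform $L^1$ bound on $\wh\chi_R$ to close this piece with the desired rate. On the region $|\eta|>|\xi|/2$, the factor $|\wh\phi_1(\xi-\eta)|$ provides no decay, and the key estimate becomes
\[
\int_{|\eta|>|\xi|/2}|\wh\chi_R(\eta)|\,d\eta\leq \frac{C}{(1+|\xi|)^\delta}\quad\text{uniformly in }R\geq 1.
\]
For $|\xi|\leq 2/R$ this is immediate from the uniform $L^1$ bound because $(1+|\xi|)^\delta$ is then bounded. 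In the complementary range $R|\xi|\geq 2$, the tail estimate above holds on the entire region of integration, and choosing $M=\lceil\delta\rceil+1$ together with $R\geq 1$ produces exactly the required $|\xi|^{-\delta}$ decay.

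The main obstacle will be ensuring uniformity of the estimate in $R$. Indeed, $\wh\chi_R$ carries a spike of height $\sim R$ at the origin, and any bound that uses $\|\wh\chi_R\|_\infty$ would blow up; only the Schwartz decay inherited from the smoothness of the mollifier $\kappa$ converts that spike into enough tail decay to trade powers of $R$ for powers of $|\eta|$, and it is this trade that absorbs the $R$-dependence and makes the outer-tail piece close.
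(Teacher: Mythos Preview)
Your proposal is correct and follows essentially the same route as the paper: pass to Fourier side, write $\wh\psi$ as a convolution of $\wh\phi_1$ (up to a unimodular phase) with $\wh\chi_R=\wh I_{2R}\cdot\wh\kappa_R$, and split the convolution integral according to whether $|\wh\phi_1|$ or $|\wh\chi_R|$ provides the decay. The only cosmetic difference is that the paper invokes the specific sub-exponential bound $|\wh\kappa(x)|\leq C e^{-\sqrt{|x|}}$ for the standard mollifier, whereas you use only that $\wh\kappa$ is Schwartz; both yield a uniform-in-$R$ $L^1$ bound on $\wh\chi_R$ and sufficient tail decay for the outer piece.
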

\noindent Define $\rho(x)=|x|$. Proceeding as in the proof of Theorem \ref{th:dev-1}, we observe that 
\ben{
\label{eq:f01}
\pr{\sqrt{N}(\wh\mu - \mu)\geq \sqrt t} \leq \pr{ \frac{\sqrt{N/m}}{{N\choose m}}\sum_{J\in \m A_N^{(m)}} \rho_-'\l( \sqrt{m}\l( \bar X_J - \mu - \sqrt{t/N}\r)\r)\geq 0 }.
}
Our next goal is to show that for sufficiently large $R$, the U-statistic with kernel $\rho'_-$ appearing in \eqref{eq:f01} and evaluated at $X_1,\ldots,X_N$ can be replaced by the U-statistic evaluated over an i.i.d. sample $Y_1^{(R)},\ldots,Y_N^{(R)}$ where $Y_j^{(R)}$ is related to $X_j$ according to \eqref{eq:Y1}. To this end, recall that $\mb E|X_1-\mu|^{q}<\infty$, and choose $R$ as $R = c m^{\frac{1}{2(q-1)}}$ for some $c>0$. 
Next, observe that 
\mln{
\label{eq:f02}
\sum_{J\in \m A_N^{(m)}} \rho_-'\l( \sqrt{m}\l( \bar X_J - \mu - \sqrt{t/N}\r)\r) 
\\
= \sum_{J\in \m A_N^{(m)}} \l( \rho_-'\l( \sqrt{m}\l( \bar Y^{(R)}_J - \sqrt{t/N}\r)\r) - \mb E\rho_{-,R}' + \mb E\rho_{-}' \r) 
\\
+ \sum_{J\in \m A_N^{(m)}} \l( \rho_-'\l( \sqrt{m}\l( \bar X_J - \mu - \sqrt{t/N}\r)\r) - \rho_-'\l( \sqrt{m}\l( \bar Y^{(R)}_J - \sqrt{t/N}\r)\r) - \mb E\rho_{-}' + \mb E\rho_{-,R}' \r),
}
where $\mb E\rho_{-}' =  \rho_-'\l( \sqrt{m}\l( \bar X_J - \mu - \sqrt{t/N}\r)\r)$ and $\mb E\rho_{-,R}' = \mb E\rho_-'\l( \sqrt{m}\l( \bar Y^{(R)}_J - \sqrt{t/N}\r)\r)$. 
It was shown in the proof of Theorem \ref{th:dev-1} that 
\be{
\sqrt{N/m}\, \mb E  \rho_-' \leq 
C\sqrt{k}\cdot g(m) - \sqrt{t}\l( \sqrt{\frac 2 \pi} + O\l(\sqrt{\frac{t}{k}}\r) \r)
= -\sqrt{t} \sqrt{\frac 2 \pi} \l(1 + o(1)\r)
}
whenever $t\ll N/m$ and $t \gg \frac{N}{m}\, g^2(m)$. Let us remark that in view of imposed moment assumptions, $g(m) = O\l( m^{-(q-2)/2}\r)$. Moreover, it follows from Hoeffding's version of Bernstein's inequality for U-statistics \citep{hoeffding1963probability} that 
\ml{
 \frac{\sqrt{\frac Nm}}{{N\choose m}}\sum_{J\in \m A_N^{(m)}} \l( \rho_-'\l( \sqrt{m}\l( \bar X_J - \mu - \sqrt{t/N}\r)\r) - \rho_-'\l( \sqrt{m}\l( \bar Y^{(R)}_J - \sqrt{t/N}\r)\r) - \mb E\rho_{-}' + \mb E\rho_{-,R}' \r)
\\
\leq 2\mb E^{1/2}\l( \rho_-'\l( \sqrt{m}\l( \bar X_{[m]} - \mu - \sqrt{t/N}\r)\r) - \rho_-'\l( \sqrt{m}\l( \bar Y^{(R)}_{[m]} - \sqrt{t/N}\r)\r)\r)^2 \sqrt{s} \bigvee \frac{16s}{3}\sqrt{\frac m N}
}
with probability at least $1-e^{-s}$. We want to choose $s>0$ such that $t=o(s)$ and
\mln{
\label{eq:f021}
\alpha(s,R) := 2\mb E^{1/2}\l( \rho_-'\l( \sqrt{m}\l( \bar X_{[m]} - \mu - \sqrt{t/N}\r)\r) - \rho_-'\l( \sqrt{m}\l( \bar Y^{(R)}_{[m]} - \sqrt{t/N}\r)\r)\r)^2 \sqrt{s} \bigvee \frac{16s}{3}\sqrt{\frac m N} 
\\
=o\l(\sqrt{t}\r)
}
as $m,N/m \to\infty$. 
To estimate 
\[
\Sigma_m^2:=\mb E\l( \rho_-'\l( \sqrt{m}\l( \bar X_{[m]} - \mu - \sqrt{t/N}\r)\r) - \rho_-'\l( \sqrt{m}\l( \bar Y^{(R)}_{[m]} - \sqrt{t/N}\r)\r)\r)^2,
\] 
note that for any $a>0$, $\rho_-'\l( \sqrt{m}\l( \bar X_{[m]} - \mu - \sqrt{t/N}\r)\r) = \rho_-'\l( \sqrt{m}\l( \bar Y^{(R)}_{[m]} - \sqrt{t/N}\r)\r)$ whenever $\l|  \sqrt{m}\l( \bar Y^{(R)}_{[m]} - \sqrt{t/N}\r)\r|> a/2$, $\l| \sqrt{m}\l( \bar X_{[m]} - \mu - \sqrt{t/N}\r)\r|> a/2$ and $\l| \sqrt{m}\l( \bar X_{[m]} - \mu -  \bar Y^{(R)}_{[m]} \r) \r|\leq a$, hence 
\ml{
\Sigma_m^2 \leq 4\l( \pr{\l|  \sqrt{m}\l( \bar Y^{(R)}_{[m]} - \sqrt{t/N}\r)\r|\leq a}+\pr{\l| \sqrt{m}\l( \bar X_{[m]} - \mu - \sqrt{t/N}\r)\r|\leq a}\r) 
\\
+ 4 \pr{\l| \sqrt{m}\l( \bar X_{[m]} - \mu -  \bar Y^{(R)}_{[m]} \r) \r| > a}.
} 
Up to the additive error term $Cg(m)=O\l( m^{-(q-2)/2}\r)$, the distributions of $\sqrt{m}\bar X_{[m]}$ and $\sqrt{m}\bar Y^{(R)}_{[m]}$ can be approximated by the normal distribution, hence 
\[
\pr{\l|  \sqrt{m}\l( \bar Y^{(R)}_{[m]} - \sqrt{t/N}\r)\r|\leq a}+\pr{\l| \sqrt{m}\l( \bar X_{[m]} - \mu - \sqrt{t/N}\r)\r|\leq a}\leq C(a + g(m)). 
\]
Moreover, 
\mln{
\pr{\l| \sqrt{m}\l( \bar X_{[m]} - \mu -  \bar Y^{(R)}_{[m]} \r) \r| > a} 
= \pr{\l| \frac{1}{\sqrt m}\sum_{j=1}^m Y_j^{(R)} I\{ |Y_j^{(R)}| >R\} \r|\geq a }
\\
\leq \pr{\l| \frac{1}{\sqrt m}\sum_{j=1}^m Y_j^{(R)} I\{ |Y_j^{(R)}| >R\}  - \mb E\l( Y_j^{(R)} I\{ |Y_j^{(R)}| >R\} \r)\r|\geq a - \sqrt m \l|\mb EY_j^{(R)} I\{ |Y_j^{(R)}| >R\}\r|}
\\
\leq C_2\frac{\mb E |X_1-\mu|^2 I\{ |X_1-\mu |>R\}}{\l( a - C_2\sqrt{m}\l| \mb E(X_1-\mu)I\{|X_1-\mu|>R\} \r| \r)^2}
\\
\label{eq:f025}
\leq \frac{\mb E |X_1-\mu|^qI\{ |X_1-\mu |>R\}}{R^{q-2}\l( a -C_2\sqrt{m}\l| \mb E(X_1-\mu)I\{|X_1-\mu|>R\} \r| \r)^2} 
}
where we used property (b) of $Y_1^{(R)}$ along with H\"{o}lder's and Markov's inequalities. It is also clear that 
\be{
\sqrt{m}\l|\mb E(X_1-\mu)I\{|X_1-\mu|>R\} \r| \leq \frac{\sqrt{m}\mb E |X_1-\mu|^q I\{ |X_1-\mu |>R\}}{R^{q-1}},
} 
therefore, for $R=cm^{\frac{1}{2(q-1)}}$ specified before, $\sqrt{m}\l|\mb E(X_1-\mu)I\{|X_1-\mu|>R\} \r|= o(1)$. 
Setting $a= 2C_2\frac{\sqrt{m}\mb E^{1/2} |X_1-\mu|^q I\{ |X_1-\mu |>R\}}{R^{q-1}}$, one easily checks that the right-hand side in \eqref{eq:f025} is at most $CR^{-(q-2)} = C' m^{-\frac{q-2}{2(q-1)}}$. 
whence $\Sigma^2_m = o(1)$. 
Therefore, there exists a function $o(1)$ such that setting $s = t/o(1)$ yields the stated goal, namely, that $t=o(s)$ and $\alpha(s,R) = o(\sqrt t)$ where $\alpha(s,R)$ was defined in \eqref{eq:f021}. 
Combined with \eqref{eq:f02}, it implies that
\mln{
\pr{\sqrt{N}(\wh\mu - \mu)\geq \sqrt t} \leq o(1)\cdot e^{-t}
\\
\label{eq:U-bdd}
+ \pr{ \frac{\sqrt{\frac{N}{m}}}{{N\choose m}}\sum_{J\in \m A_N^{(m)}} \l(\rho_-'\l( \sqrt{m}\l( \bar Y^{(R)}_J -  \sqrt{t/N}\r)\r) - \mb E\rho'_{-,R}\r)\geq \sqrt{t}\sqrt{\frac{2}{\pi}}(1+o(1)) }. 
}
Note that the U-statistic in the display above is now a function of bounded random variables, hence we can apply Corollary \ref{th:bernstein} with $\gamma_2=0$. As $\|\rho'_-\|_\infty =1$, condition (ii) of the corollary holds. 
Let $\sqrt{\frac{m}{N}}\sum_{j=1}^N h^{(1)}(Y^{(R)}_{j})$ be the first term in Hoeffding decomposition of the U-statistic 
\[
\frac{\sqrt{N/m}}{{N\choose m}}\sum_{J\in \m A_{N}^{(m)}} \l(\rho_-'\l( \sqrt{m}\l( \bar Y^{(R)}_{J} - \mb E Y_1^{(R)} - \sqrt{t/N}+\mb E Y_1^{(R)}\r)\r) - \mb E \rho'_{-,R} \r).
\]
Following the lines of the proof of Theorem \ref{th:clt-1} and recalling that $\sqrt{m}\l| \mb EY_1^{(R)}\r|=o(1)$ in view of property (c) of $Y_1^{(R)}$ and the choice of $R$, we deduce that 
\be{
\var\l( \sqrt{m} h^{(1)}(Y^{(R)}_{1}) \r) = \frac{2}{\pi}(1+o(1))
}
where $o(1)\to 0$ as $m,N/m\to\infty$, validating assumption (iii) of the corollary. It remains to verify assumption (i) and specify the value of $j_{\max}$. 
Recall that $\rho'_-(x) = I\{ x\geq 0\} - I\{x<0\}$ and let $\wt Y_j^{(R)}$ stand for $Y_j^{(R)} - \mb EY_j^{(R)}$. The function $f_j(u_1,\ldots,u_j)$ appearing in the statement of Theorem \ref{th:concentration} can therefore be expressed as
\ml{
f_j(u_1,\ldots,u_j) = \mb E \rho'_-\l(  \frac{1}{\sqrt m}\sum_{i=1}^j u_i + \sqrt{\frac{m-j}{m}}\frac{\sum_{i=j+1}^m \wt Y_i^{(R)}}{\sqrt{m-j}} - \sqrt{\frac{tm}{N}} + \sqrt{m}\mb EY_1^{(R)}\r) 
\\
= \mb E \rho'_-\l(  \frac{1}{\sqrt m}\sum_{i=1}^j u_i + \sqrt{\frac{m-j}{m}}\frac{\sum_{i=j+1}^m \wt Y_i^{(R)}}{\sqrt{m-j}} - \sqrt{\frac{tm}{N}} + \sqrt{m}\mb EY_1^{(R)}\r) 
\\
= 2 \Phi_{m-j}\l(  \frac{1}{\sqrt{m-j}}\sum_{i=1}^j u_i - \sqrt{\frac{m}{m-j}}\l( \sqrt{\frac{tm}{N}}+\sqrt{m}\mb EY_1^{(R)}\r)\r) - 1
}
where for any integer $k\geq 1$, $\Phi_k$ stands for the cumulative distribution function of $\frac{1}{\sqrt k}\sum_{j=1}^k \wt Y_j^{(R)}$ and $\phi_k$ is the corresponding density function that exists by assumption. Consequently, 
\[
\partial_{u_j}\ldots\partial_{u_1} f_1(u_1,\ldots,u_j) = \frac{2}{(m-j)^{j/2}}\phi^{(j-1)}_{m-j}\l(  \frac{1}{\sqrt{m-j}}\sum_{i=1}^j u_i - \sqrt{\frac{m}{m-j}}\l( \sqrt{\frac{tm}{N}}+\sqrt{m}\mb EY_1^{(R)}\r)\r).
\]
The following lemma demonstrates that Theorem \ref{th:concentration} applies with $\gamma_1=1/2$ and that $j_{\max} = \frac{m}{\log(m)} \,o(1)$ in the statement of Corollary \ref{th:bernstein}. 
\begin{lemma}
\label{lemma:deriv-bound}
Let assumptions of Theorem \ref{th:U-mom} hold. Then for $m$ large enough and $j=o(m/\log m)$,
\be{
\l\| \phi_{m-j}^{(j-1)} \r\|_\infty \leq C \l(\frac{2j}{e}\r)^{j/2}
}
for a sufficiently large constant $C=C(P)$.
\end{lemma}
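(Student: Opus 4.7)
The plan is to pass through the Fourier transform. Letting $\hat\psi_c(u)$ denote the characteristic function of $\tilde Y_1^{(R)} := Y_1^{(R)} - \mb E Y_1^{(R)}$, the characteristic function of $\frac{1}{\sqrt{m-j}}\sum_{i=1}^{m-j}\tilde Y_i^{(R)}$ equals $\l[\hat\psi_c(t/\sqrt{m-j})\r]^{m-j}$, and Fourier inversion together with differentiation under the integral sign gives
\[
\|\phi_{m-j}^{(j-1)}\|_\infty \leq \frac{(m-j)^{j/2}}{2\pi}\int_{\mb R}|u|^{j-1}|\hat\psi_c(u)|^{m-j}\,du
\]
after the substitution $u = t/\sqrt{m-j}$. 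Since $|\hat\psi_c(u)| = |\hat\psi(u)|$, Lemma~\ref{lemma:truncation} ensures $|\hat\psi_c(u)| \leq C/(1+|u|)^\delta$, so for $j$ sufficiently smaller than $m$ the integrand belongs to $L^1(\mb R)$ and all operations above are legitimate. I would then split the integral at $|u|=\eps$ for some small fixed $\eps>0$ and handle the two regions separately.

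On $|u|\leq \eps$, the second-order Taylor expansion $|\hat\psi_c(u)|^2 \leq 1 - \var(\tilde Y_1^{(R)})\, u^2 + o(u^2)$, together with $\var(\tilde Y_1^{(R)})\to 1$ (guaranteed by the choice $R=R(m)\to\infty$ made earlier in the proof of Theorem~\ref{th:U-mom}), yields $|\hat\psi_c(u)| \leq e^{-u^2/4}$ once $m$ is sufficiently large. The Gaussian moment identity
\[
\int_{\mb R}|u|^{j-1}e^{-(m-j)u^2/4}\,du = \frac{4^{j/2}\,\Gamma(j/2)}{(m-j)^{j/2}},
\]
combined with the prefactor $(m-j)^{j/2}$ and Stirling's estimate $\Gamma(j/2) \leq C'\sqrt{j}\,(j/(2e))^{j/2}$, then produces a bound of the form $C\,(2j/e)^{j/2}$, with the constant $2$ in $2j/e$ arising precisely from $4^{j/2}/(2e)^{j/2} = (2/e)^{j/2}$.

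For the outer region $|u|>\eps$ I would combine two ingredients: first, because $\tilde Y_1^{(R)}$ is non-lattice (it admits a density by construction of the mollifier) and its characteristic function tends to $0$ at infinity, continuity gives $\rho := \sup_{|u|\geq \eps}|\hat\psi_c(u)| < 1$; second, Lemma~\ref{lemma:truncation} ensures $\int_{\mb R}|u|^{j-1}(1+|u|)^{-\delta j_0}\,du < \infty$ for any integer $j_0 > (j+1)/\delta$. Factoring $|\hat\psi_c(u)|^{m-j}$ as $|\hat\psi_c(u)|^{m-j-j_0}\cdot|\hat\psi_c(u)|^{j_0}$ and bounding the first factor by $\rho^{m-j-j_0}$ then yields
\[
(m-j)^{j/2}\int_{|u|>\eps}|u|^{j-1}|\hat\psi_c(u)|^{m-j}\,du \lesssim (m-j)^{j/2}\, \rho^{\,m-j-j_0}\, C_2^{\,j},
\]
which is below $(2j/e)^{j/2}$ as soon as $\tfrac{j}{2}\log(m-j) + (m-j-j_0)\log\rho \leq \tfrac{j}{2}\log(2j/e) + O(j)$. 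Taking logarithms, this reduces to $j\log m = O(m\,|\log\rho|)$, which is precisely the regime $j=o(m/\log m)$ assumed in the statement of the lemma.

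The main technical subtlety, I expect, is the careful tracking of constants in the inner region so that the leading factor comes out as $(2j/e)^{j/2}$ rather than $(Cj)^{j/2}$ for an uncontrolled $C$; this relies on keeping $\var(\tilde Y_1^{(R)})$ close to $1$, for which the growth $R=R(m)\to\infty$ chosen in the proof of Theorem~\ref{th:U-mom} is exactly what is required. The outer region is by comparison a routine tail estimate, and the hypothesis $j=o(m/\log m)$ emerges there naturally as what is needed to absorb the $(m-j)^{j/2}$ prefactor against the exponential decay $\rho^{\,m-j}$ inherited from the tail of the characteristic function.
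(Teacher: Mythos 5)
Your proposal is correct and takes essentially the same approach as the paper: Fourier inversion followed by differentiation under the integral, the splitting of $\int_{\mb R}|u|^{j-1}|\wh\psi_c(u)|^{m-j}\,du$ at a small threshold, a Gaussian moment identity plus Stirling on the inner region giving the $(2j/e)^{j/2}$ factor, and exponential smallness of the outer region from $\sup_{|u|\geq\eps}|\wh\psi_c(u)|<1$ together with the polynomial decay from Lemma~\ref{lemma:truncation}, with the restriction $j=o(m/\log m)$ emerging exactly from that outer estimate. The only nontrivial cosmetic differences are that you handle the outer region with a single algebraic factorization $|\wh\psi_c|^{m-j}=|\wh\psi_c|^{m-j-j_0}|\wh\psi_c|^{j_0}$ where the paper instead splits the domain geometrically into a compact annulus and a far tail, and that you are more explicit than the paper about the fact that the relevant summand is the centered truncated variable $\wt Y_1^{(R)}$ (whose variance only converges to $\sigma^2=1$ as $R=R(m)\to\infty$), a point the paper glosses over by carrying an uninterpreted $\sigma^{-j}$ through its final display.
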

We postpone the proof of this lemma until section \ref{proof:deriv-bound}. 
As all the necessary conditions have been verified, the bound of Corollary \ref{th:bernstein} applies. Recalling that $t \gg \frac{N}{m}\, g^2(m)$ and that $g(m) \leq C \frac{\mb E|X_1-\mu|^q}{m^{(q-2)/2}}$, Corollary \ref{th:bernstein} yields that the probability in the right-hand side of inequality \eqref{eq:U-bdd} can be bounded from above by $\exp{-\frac{t}{2\sigma^2(1+o(1))}}$ for all 
\ben{
\label{eq:t-range}
\frac{N}{m^{q-1}} \ll t\leq q(N,m) 
}
whenever 
\[
q(N,m)=\min\l(\frac{N}{m R^2}, \frac{N}{m \log^2(N)}\r)\cdot o\l(1\r) \text{ as } N/m\to\infty.
\]
To get the expression for the second term in the minimum above from the bound of the corollary, it suffices to consider the cases when $m\geq \frac{\sqrt N}{\log(N)}o(1)$ and $m\leq \frac{\sqrt N}{\log(N)}o(1)$ separately; we omit the simple algebra. 
Since $R=c m^{\frac{1}{2(q-1)}}$, \eqref{eq:t-range} in only possible when $q-2 > \frac{1}{q-1}$, implying the requirement $q>\frac{3+\sqrt{5}}{2}$. 
The final form of the bound stating that 
\[
\pr{\sqrt{N}(\wh\mu - \mu)\geq \sigma\sqrt t} \leq \exp{-\frac{t}{2(1+o(1))}}
\]
uniformly for all $\frac{N}{m^{q-1}}\ll t \ll \frac{N}{m^{\frac{q}{q-1}} \vee m\log^2(N)}$. The argument needed to estimate $\pr{\sqrt{N}(\wh\mu - \mu)\leq -\sigma\sqrt t}$ is identical.
\end{proof}

\section{Open questions.}
\label{sec:discussion}

Several potentially interesting questions and directions have not been addressed in this paper. We summarize few of them below. 
\begin{itemize}
\item[(i)] First is the question related to assumptions in Theorem \ref{th:U-mom}: does it still hold for distributions with only $2+\eps$ moments? And can the assumptions requiring absolute continuity and a bound on the rate of decay of the characteristic function be dropped? For example, Corollary \ref{th:clt-1} holds for lattice distributions as well. 
\item[(ii)] It is known that \citep{hanson1971bound} the sample mean based on i.i.d. observations from the multivariate normal distribution $N(\mu,\Sigma)$ satisfies the inequality 
\[
\l\| \bar X_N - \mu\r\|_2 \leq \sqrt{\frac{\mathrm{trace}(\Sigma)}{N}}+\sqrt{\frac{2t\|\Sigma\|}{N}}
\] 
with probability at least $1-e^{-t}$. Does there exist an estimator of the mean that achieves this bound (up to $o(1)$ factors) for the heavy-tailed distributions? 
Partial results in this direction have been recently obtained by \citet{lee2022optimal}.
\item[(iii)] Exact computation of the estimator $\wh\mu_N$ is infeasible, as it requires evaluation and sorting of $\asymp \l( \frac{N}{m}\r)^m$ sample means. Therefore, it is interesting to understand whether it can be replaced by $\med{\bar X_J, \ J\in \m B}$ where $\m B$ is a (deterministic or random) subset of $\m A_{N}^{(m)}$ of much smaller cardinality, while preserving the deviation guarantees. For instance, it is easy to deduce from results on incomplete U-statistics in section 4.3 of the book by \citet{lee1990u} combined with the proof of Corollary \ref{th:clt-1} that if $\m B$ consists of $M$ subsets selected at random with replacement from $\m A_N^{m}$, then the asymptotic distribution of $\sqrt{N}\l( \med{\bar X_J, \ J\in \m B} - \mu\r)$ is still $N(0,\sigma^2)$ as long as $M\gg N$. However, establishing results in spirit of Theorem \ref{th:U-mom} in this framework appears to be more difficult.
\end{itemize}
	

\section{Remaining proofs.}
\label{sec:proofs}

The proofs omitted in the main text are presented in this section. 

\subsection{Technical tools.}
\label{sec:tools}

Let us recall the definition of Hoeffding's decomposition \citep{hoeffding1948class} and closely related concepts that are at the core of many arguments related to U-statistics. Assume that $Y_1,\ldots,Y_N$ are i.i.d. random variables with distribution $P_Y$. 
Recall that $\m A_{N}^{(m)} = \l\{ J\subseteq [N]: \ |J|=m\r\}$ and that the U-statistic with permutation-symmetric kernel $h_m$ is defined as 
\[
U_{N,m} = \frac{1}{{N\choose m}}\sum_{J\in \m A_{N}^{(m)}} h_m(Y_i, \ i\in J),
\] 
where we assume that $\mb Eh_m=0$. Moreover, for $j=1,\ldots,m,$ define the projections
\ben{
\label{eq:pi}
(\pi_j h_m)(y_1,\ldots,y_j):=(\delta_{y_1} - P_Y)\times\ldots\times(\delta_{y_j}-P_Y)\times P_Y^{m-j} h_m.
}
For brevity and to ease notation, we will often write $h_m^{(j)}$ in place of $\pi_j h_m$. The variances of these projections will be denoted by
\[
\delta_j^2 :=\var\l( \h{j}_m(Y_1,\ldots,Y_j) \r).
\]
In particular, $\delta_m^2 = \var(h_m)$. It is well known \citep{lee1990u} that $\h{j}_m$ can be viewed geometrically as orthogonal projections of $h_m$ onto a particular subspace of $L_2(P_Y^{m})$. 
The kernels $\h{j}_m$ have the property of complete degeneracy, meaning that $\mb E \h{j}_m(y_1,\ldots,y_{j-1},Y_j) = 0$ for $P_Y$-almost all $y_1,\ldots,y_{j-1}$ while $\h{j}_m(Y_1,\ldots,Y_j)$ is non-zero with positive probability. One can easily check that $h(y_1,\ldots,y_m) = \sum_{j=1}^m \sum_{J\subseteq [m]: |J|=j} \h{j}_m(y_i,\, i\in J)$, in particular, the partial sum $\sum_{j=1}^k \sum_{J\subseteq [m]: |J|=j} \h{j}_m(y_i,\, i\in J)$ is the best approximation of $h_m$, in the mean-squared sense, in terms of sums of functions of at most $k$ variables. 
The Hoeffding decomposition states that (see \citep{hoeffding1948class} as well as the book by \citet{lee1990u})
\ben{
\label{eq:hoeffding}
U_{N,m} = \sum_{j=1}^m {m\choose j} U_{N,m}^{(j)},
}
where $U_{N,m}^{(j)}$ are U-statistics with kernels $\h{j}_m$, namely 
$U_{N,m}^{(j)}:=\frac{1}{{N\choose j}} \sum\limits_{J\in \m A_{N}^{(j)}} \h{j}_m(Y_i, \ i\in J)$. Moreover, all terms in representation \eqref{eq:hoeffding} are uncorrelated. 

Next, we recall some useful moment bounds, found for instance in the book by \citet{Decoupling}, for the Rademacher chaos variables. Let $\eps_1,\ldots,\eps_N$ be i.i.d. Rademacher random variables (random signs), $\{ a_J, \ J\in \m A_N^{(l)} \} \subset \mb R$, and $Z = \sum_{J\in \m A_{N}^{(l)} } a_J \prod_{i\in J} \eps_i$. Here, $\prod_{i\in J} \eps_i = \eps_{i_1}\cdot\ldots\cdot \eps_{i_l}$ for $J=\{i_1,\ldots,i_l\}$. 
\begin{fact}[Bonami inequality]
\label{fact:1}
Let $\sigma^2(Z) = \var(Z) = \sum_{J\in \m A_{N}^{(l)}} a_J^2$. Then for any $q>2$,
\[
\mb E |Z|^q \leq \l( q-1\r)^{ql/2} \l(\sigma^2(Z) \r)^{q/2}.
\]
\end{fact}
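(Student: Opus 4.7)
The plan is to prove Fact \ref{fact:1} via the standard hypercontractivity route, reducing the multivariate inequality to Bonami's two-point inequality and then tensorizing. The key observation is that $Z$ is a homogeneous Rademacher chaos of degree $l$, so it is an eigenfunction of the noise operator $T_\rho f(\eps) := \sum_J \rho^{|J|} \widehat f(J) \prod_{i\in J}\eps_i$ with eigenvalue $\rho^l$; this reduces the problem to getting the optimal $L^q$--$L^2$ bound for $T_\rho$ with $\rho = 1/\sqrt{q-1}$.

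First I would establish Bonami's two-point lemma: for a single Rademacher $\eps$, real $a,b$, and $q\geq 2$,
\[
\mb E\l| a + \tfrac{b}{\sqrt{q-1}}\,\eps\r|^q \;\leq\; \l(\mb E|a+b\eps|^2\r)^{q/2}.
\]
By scaling, this is equivalent to showing $(1+x)^q+(1-x)^q \leq 2\bigl(1+(q-1)x^2\bigr)^{q/2}$ for $x\in[0,1]$, which I would verify by Taylor-expanding both sides and checking term-by-term that $\binom{q}{2k}\leq (q-1)^k \binom{q/2}{k}\cdot 2^{k}/\text{something}$; cleaner is to differentiate in $q$ and reduce to a single-variable convexity check, or to invoke the original Beckner proof via analytic continuation. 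Either way this step is the technical heart of the argument.

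Next I would tensorize by induction on $N$. Conditioning on $\eps_2,\ldots,\eps_N$, the function $f$ becomes $a+b\eps_1$ with $a,b$ multilinear in the remaining $\eps_i$; applying the two-point lemma to the inner expectation, followed by Minkowski's integral inequality in the outer variables, yields $\|T_{1/\sqrt{q-1}} f\|_q \leq \|f\|_2$ for every multilinear polynomial $f$ on $\{-1,+1\}^N$. Specializing to the homogeneous chaos $f=Z=\sum_{|J|=l} a_J \prod_{i\in J}\eps_i$ gives $T_{1/\sqrt{q-1}} Z = (q-1)^{-l/2} Z$, hence
\[
(q-1)^{-l/2}\|Z\|_q \;\leq\; \|Z\|_2 \;=\; \sigma(Z),
\]
and raising both sides to the $q$-th power produces the claimed bound $\mb E|Z|^q\leq (q-1)^{ql/2}\sigma^2(Z)^{q/2}$.

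The main obstacle is the two-point lemma itself: the inequality is sharp and the extremal $\rho=1/\sqrt{q-1}$ is exactly what makes the hypercontraction work, so there is no slack to exploit and one needs a genuinely tight comparison of $q$-norms. After that, tensorization is essentially mechanical, and the final application to the degree-$l$ chaos is a one-line eigenfunction computation.
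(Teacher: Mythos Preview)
The paper does not supply its own proof of this fact; it simply records it as a known inequality and refers to the book by de~la~Pe\~na and Gin\'e (\emph{Decoupling}). Your outline is the standard hypercontractivity proof---Bonami's two-point lemma, tensorization via induction and Minkowski, then the eigenfunction observation $T_{1/\sqrt{q-1}}Z=(q-1)^{-l/2}Z$---and it is correct in structure.

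Two small points to tighten if you ever write this out in full. First, in the two-point lemma you restrict to $x\in[0,1]$ after scaling, but one also needs the case $x>1$ (where $|1-x|^q=(x-1)^q$); this is routine but should not be dropped. Second, your description of the coefficient comparison in the Taylor expansion is left unfinished (``$\leq\ldots/\text{something}$''); the clean way is to expand both sides in even powers of $x$ and verify $\binom{q}{2k}\leq \binom{q/2}{k}(q-1)^k$ term by term, which follows from an elementary product comparison. With those filled in, the argument is complete and matches the classical route to the stated bound.
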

Now we state a version of the symmetrization inequality for completely degenerate U-statistics due to \citet{sherman1994maximal}, also see the paper by \citet{song2019approximating} for the modern exposition of the proof. The main feature of this inequality, put forward by \citet{song2019approximating}, is the fact that its proof does not rely on decoupling, and yields constants that do not grow too fast with the order of U-statistics. 
\begin{fact}
\label{fact:2}
Let $h$ be a completely degenerate kernel of order $l$, and $\Phi$ -- a convex, nonnegative, non-decreasing function. Moreover, assume that $\eps_1,\ldots,\eps_N$ are i.i.d. Rademacher random variables. Then 
\[
\mb E\Phi\l( \sum_{ 1\leq j_1<\ldots<j_l\leq N} h(Y_{j_1},\ldots,Y_{j_l})\r) \leq \mb E\Phi\l(2^l \sum_{1\leq j_1<\ldots<j_l\leq N} \eps_{j_1}\ldots\eps_{j_l} h(Y_{j_1},\ldots,Y_{j_l}) \r). 
\]
\end{fact}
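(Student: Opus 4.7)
My plan is to prove the inequality by iterating a one-dimensional symmetrization argument over the indices $i = 1,2,\ldots,N$ in sequence. At each step a single Rademacher sign $\eps_i$ is inserted together with a factor of $2$; after all $N$ steps, every subset $J = \{j_1<\ldots<j_l\}$ will have accumulated exactly the multiplier $\prod_{i\in J}(2\eps_i) = 2^l \prod_{i \in J}\eps_i$, matching the target bound. This is precisely the route that avoids decoupling and delivers the clean constant $2^l$ rather than the $m^m$-type blow-up mentioned in the introduction of Section~\ref{sec:growing-order}.

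The core tool is a conditional one-variable symmetrization lemma: if $H_1 = H_1(Y_1,\ldots,Y_N)$ and $H_2 = H_2(Y_2,\ldots,Y_N)$ satisfy $\mb E[H_1\mid Y_2,\ldots,Y_N] = 0$ almost surely, with $H_2$ independent of $Y_1$, then for any convex, nondecreasing, nonnegative $\Phi$,
\[
\mb E \Phi(H_1 + H_2) \leq \mb E \Phi(2\eps_1 H_1 + H_2).
\]
The proof follows the standard route. Introduce an independent copy $Y'_1$; by the centering hypothesis and Jensen's inequality,
\[
\mb E\Phi(H_1 + H_2) \leq \mb E \Phi\big(H_1(Y_1,\ldots) - H_1(Y'_1,\ldots) + H_2\big).
\]
Exchangeability of the pair $(Y_1, Y'_1)$ allows one to insert a random sign $\eps_1$ into the symmetrized expression, and the convex splitting
\[
\eps_1\big(H_1(Y_1) - H_1(Y'_1)\big) + H_2 = \frac{1}{2}\big(2\eps_1 H_1(Y_1) + H_2\big) + \frac{1}{2}\big(-2\eps_1 H_1(Y'_1) + H_2\big)
\]
combined with convexity of $\Phi$ and the $\eps_1\leftrightarrow-\eps_1$, $Y_1\leftrightarrow Y'_1$ symmetries identifies the two resulting pieces and gives the lemma.

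Armed with this lemma, I proceed by induction on $i \in [N]$, maintaining the representation
\[
S^{(i)} := \sum_{J} \Big(\prod_{j \in J,\, j \leq i} 2\eps_j\Big)\, h(Y_J),
\]
so that $S^{(0)} = \sum_J h(Y_J)$ and the goal is $\mb E\Phi(S^{(0)}) \leq \mb E\Phi(S^{(N)})$. At step $i$, decompose $S^{(i-1)} = H_1^{(i)} + H_2^{(i)}$, where $H_1^{(i)}$ collects the summands with $i \in J$ and $H_2^{(i)}$ those with $i \notin J$. Conditioning on $\{Y_k : k \neq i\}$ together with $\{\eps_j : j<i\}$, each coefficient appearing in $H_1^{(i)}$ is measurable with respect to the conditioning and does not involve $Y_i$, so complete degeneracy of $h$ delivers
\[
\mb E_{Y_i}\big[H_1^{(i)}\big] = \sum_{J\ni i} \Big(\prod_{j \in J,\, j<i}2\eps_j\Big)\, \mb E_{Y_i}\big[h(Y_J)\big] = 0.
\]
Applying the lemma conditionally gives $\mb E\Phi(S^{(i-1)}) \leq \mb E\Phi(2\eps_i H_1^{(i)} + H_2^{(i)}) = \mb E\Phi(S^{(i)})$, closing the induction; after the $N$-th step, every $J$ of size $l$ carries the coefficient $\prod_{i \in J}(2\eps_i) = 2^l \prod_{i\in J}\eps_i$, which is exactly the target.

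The main delicate point is preservation of the centering condition across the iteration --- verifying that $H_1^{(i)}$ remains centered in $Y_i$ after the previous steps have inserted random signs $\eps_1,\ldots,\eps_{i-1}$ into the coefficients. This is the technical heart of the argument and is precisely where the \emph{complete} degeneracy of $h$, rather than just its zero mean, is needed: the marginal expectation over any single argument of $h$ vanishes regardless of the values of the others, so attaching $Y_i$-independent multiplicative constants to $h(Y_J)$ does not spoil centering. Everything else is bookkeeping of coefficients and standard Fubini.
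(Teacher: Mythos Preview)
Your proposal is correct and is precisely the sequential, coordinate-by-coordinate symmetrization argument of Sherman (1994) that the paper cites for this fact without reproducing a proof. The one-variable lemma is standard and your verification that complete degeneracy preserves the conditional centering through the iteration is exactly the point; there is nothing further to compare.
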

Next is the well-known identity, due to \citet{hoeffding1963probability}, that allows to reduce many problems for non-degenerate U-statistics to the corresponding problems for the sums of i.i.d. random variables.
\begin{fact}
\label{fact:3}
The following representation holds: 	
\[
U_{N,m} = \frac{1}{N!} \sum_{\pi} W_{\pi},
\]
where the sum is over all permutations $\pi:[N]\mapsto[N]$, and 
\[
W_{\pi} = \frac{1}{k}\l( h_m\l(Y_{\pi(1)},Y_{\pi(2)},\ldots,Y_{\pi(m)}\r) + \ldots +  h_m\l(Y_{\pi((k-1)m+1)},Y_{\pi((k-1)m+2)},\ldots,Y_{\pi(km)}\r)\r)
\]
for $k=\lfloor N/m \rfloor$.
\end{fact}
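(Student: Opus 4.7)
The plan is to prove this via a straightforward double-counting (Fubini-type) argument that swaps the order of summation between permutations and the $k$ blocks inside $W_\pi$.

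First, I would use the permutation-symmetry of $h_m$ to note that for each fixed block index $i \in [k]$, the value $h_m\bigl(Y_{\pi((i-1)m+1)},\ldots,Y_{\pi(im)}\bigr)$ depends on $\pi$ only through the unordered $m$-subset $J_i(\pi):=\{\pi((i-1)m+1),\ldots,\pi(im)\}\subseteq[N]$. Then I would exchange the two sums:
\[
\frac{1}{N!}\sum_\pi W_\pi \;=\; \frac{1}{k}\sum_{i=1}^{k}\frac{1}{N!}\sum_\pi h_m\bigl(Y_j,\, j\in J_i(\pi)\bigr).
\]

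The second step is a counting claim: for each fixed $i\in[k]$ and each fixed $J\in\m A_N^{(m)}$, the number of permutations $\pi$ for which $J_i(\pi)=J$ equals $m!(N-m)!$. Indeed, one may assign the $m$ elements of $J$ to the $m$ positions $(i-1)m+1,\ldots,im$ in any of $m!$ orders, and independently assign the remaining $N-m$ elements of $[N]\setminus J$ to the other $N-m$ positions in any of $(N-m)!$ orders. Substituting this into the inner sum gives
\[
\frac{1}{N!}\sum_\pi h_m\bigl(Y_j,\, j\in J_i(\pi)\bigr) \;=\; \frac{m!(N-m)!}{N!}\sum_{J\in \m A_N^{(m)}} h_m(Y_j,\, j\in J) \;=\; U_{N,m},
\]
independently of $i$.

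Finally, plugging this into the swapped expression yields $\frac{1}{N!}\sum_\pi W_\pi = \frac{1}{k}\sum_{i=1}^{k} U_{N,m} = U_{N,m}$, which is the claim. There is no real obstacle here: the only thing to be slightly careful about is that when $m\nmid N$ the last $N-km$ indices $\pi(km+1),\ldots,\pi(N)$ are simply unused in the definition of $W_\pi$, but this causes no issue because the counting argument above only cares about how many permutations realize a prescribed $m$-subset in a single block, and the leftover positions contribute the factor $(N-m)!$ regardless of whether $k=\lfloor N/m\rfloor$ equals $N/m$ or not.
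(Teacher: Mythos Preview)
Your proof is correct and is essentially the standard double-counting argument for Hoeffding's representation. The paper itself does not supply a proof of this fact; it is stated as a well-known identity and attributed to \citet{hoeffding1963probability}, so there is no in-paper argument to compare against.
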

Finally, we state a version of Rosenthal's inequality for the moments of sums of independent, nonnegative random variables with explicit constants, see \citep{boucheron2013concentration,chen2012masked}. 
\begin{fact}
\label{fact:rosenthal}
Let $Y_1,\ldots,Y_N$ be independent random variables such that $Y_j\geq 0$ with probability $1$ for all $j\in [N]$. Then for any $q\geq 1$,
\[
\l(\mb E\l| \sum_{j=1}^N Y_j \r|^q\r)^{1/q}\leq \l( \l(\sum_{j=1}^N \mb E Y_j\r)^{1/2} + 2\sqrt{eq} \l( \mb E\max_{j=1,\ldots,N}Y_j^{q}\r)^{1/2q}\r)^2.
\]
\end{fact}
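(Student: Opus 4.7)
The plan is to combine symmetrization with a conditional Khintchine-type inequality, then dominate the resulting $\ell_2$-norm of $(Y_1,\ldots,Y_N)$ by a geometric mean of $M:=\max_j Y_j$ and $S:=\sum_j Y_j$, and finally solve the resulting quadratic inequality. Concretely, write $\|S\|_q := (\mb E S^q)^{1/q}$ and use the triangle inequality to get $\|S\|_q\leq \mb E S + \|S-\mb E S\|_q$.

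Next I would apply the standard symmetrization inequality, $\|S-\mb E S\|_q \leq 2\|\sum_{j=1}^N \eps_j Y_j\|_q$ for i.i.d.\ Rademacher signs $\eps_j$ independent of the $Y_j$'s, followed by conditional Khintchine on the $\eps_j$'s: $\mb E_\eps|\sum_j\eps_j Y_j|^q \leq (eq/2)^{q/2}(\sum_j Y_j^2)^{q/2}$ (or an equivalent sharp bound giving rise to the constant $\sqrt{eq}$ that appears in the statement). Taking expectations in $Y$ and then $q$-th roots yields $\|\sum_j \eps_j Y_j\|_q \leq \sqrt{eq/2}\,\|(\sum_j Y_j^2)^{1/2}\|_q$.

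Then I would use the pointwise inequality $\sum_j Y_j^2 \leq M\cdot S$, valid since $Y_j\geq 0$ and $Y_j\leq M$, together with Cauchy--Schwarz in the form $\mb E (MS)^{q/2}\leq (\mb E M^q)^{1/2}(\mb E S^q)^{1/2}$. This gives
\[
\|(\textstyle\sum_j Y_j^2)^{1/2}\|_q \leq \l(\mb E M^q\r)^{1/(2q)}\l(\mb E S^q\r)^{1/(2q)} = \|M\|_q^{1/2}\,\|S\|_q^{1/2}.
\]
Combining these steps produces an inequality of the form $\|S\|_q \leq \mb E S + 2\sqrt{eq}\,\|M\|_q^{1/2}\,\|S\|_q^{1/2}$. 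Setting $x = \|S\|_q^{1/2}$, $a = \sqrt{\mb E S}$ and $b = \sqrt{eq}\,\|M\|_q^{1/2}$, this becomes $x^2 - 2bx - a^2 \leq 0$, whose positive root satisfies $x \leq b + \sqrt{a^2+b^2} \leq a + 2b$ (using $\sqrt{a^2+b^2}\leq a+b$). Squaring gives $\|S\|_q \leq (a+2b)^2 = (\sqrt{\mb E S} + 2\sqrt{eq}\,\|M\|_q^{1/2})^2$, which is precisely the stated inequality after rewriting $\|M\|_q^{1/2} = (\mb E \max_j Y_j^q)^{1/(2q)}$.

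The main obstacle is matching the constant $2\sqrt{eq}$ exactly; this requires using the sharp constant in the Rademacher Khintchine inequality (Haagerup's form or the standard $\sqrt{q/e}$-type bound with appropriate bookkeeping) rather than a generic $\sqrt{q}$. A secondary technicality is extending the argument from $q\geq 2$ to $q\geq 1$: for $1\leq q\leq 2$, one can either invoke monotonicity $\|S\|_q\leq \|S\|_2$ and handle the $q=2$ case directly from the identity $\mb E S^2 = (\mb E S)^2 + \sum_j \mathrm{Var}(Y_j) \leq (\mb E S)^2 + \mb E(MS)$, or interpolate between the $q=1$ and large-$q$ bounds; either route fits in the same quadratic-inequality framework.
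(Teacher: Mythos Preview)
The paper does not supply its own proof of this fact; it is stated as a known result with citations to \citet{boucheron2013concentration} and \citet{chen2012masked}. Your proposal is essentially the standard argument found in those references: symmetrize, apply Khintchine conditionally, use the pointwise bound $\sum_j Y_j^2\leq M\cdot S$, then Cauchy--Schwarz, and solve the resulting quadratic in $\|S\|_q^{1/2}$. So there is nothing to compare against in the paper itself, and your route is the expected one.

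Two small remarks. First, the constants line up more cleanly than you suggest: for $q\geq 2$ Bonami's inequality (Fact~\ref{fact:1} with $l=1$) already gives the Khintchine constant $\sqrt{q-1}\leq\sqrt{eq}$, and combined with the factor $2$ from symmetrization this delivers exactly the coefficient $2\sqrt{eq}$ (with room to spare). Second, your ``secondary technicality'' for $1\leq q<2$ is handled by the same scheme without splitting into cases: symmetrization is valid for all $q\geq 1$, and for $q\leq 2$ one simply replaces Khintchine by the trivial bound $\|\sum_j\eps_jY_j\|_q\leq\|\sum_j\eps_jY_j\|_2=(\sum_jY_j^2)^{1/2}$, i.e.\ Khintchine with constant $1\leq\sqrt{eq}$. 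The rest of the chain ($\sum Y_j^2\leq MS$, Cauchy--Schwarz, quadratic) goes through verbatim, so no separate interpolation or monotonicity argument is needed.
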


\subsection{Proof of Theorem \ref{th:U-stat}.}
\label{proof:U-stat}

Recall that 
\al{
\h{j}_m(y_1,\ldots,y_j)&:=(\delta_{y_1} - P_Y)\times\ldots\times(\delta_{y_j}-P_Y)\times P_Y^{m-j} h_m, 
\\
\delta_j^2 &:=\var\l( \h{j}_m(Y_1,\ldots,Y_j) \r).
}
It is easy to verify that 
\ml{
h_m(Y_1,\ldots,Y_m) = (\delta_{Y_1} - P_Y + P_Y)\times\ldots\times(\delta_{Y_m}- P_Y + P_Y) h_m
= \sum_{j=1}^m \sum_{J\subseteq [m]: |J|=j} \h{j}_m(Y_i,\, i\in J)
}
and that the terms in the sum above are mutually orthogonal, yielding that 
\ben{
\label{eq:a01}
\var\l(h_m(Y_1,\ldots,Y_m)\r) = \sum_{j=1}^m {m\choose j} \delta_j^2.
}
Moreover, as a corollary of Hoeffding's decomposition, one can get the well known identities
\al{
&\var(U_{N,m}) = \sum_{j=1}^m \frac{{m\choose j}^2}{{N\choose j}}\delta_j^2, 
\quad\var(S_{N,m}) = \frac{m^2}{N}\delta_1^2,
\\
&\var(U_{N,m} - S_{N,m}) = \var(U_{N,m}) - \var(S_{N,m}) = \sum_{j=2}^m \frac{{m\choose j}^2}{{N\choose j}}\delta_j^2.
}
See Chapters 1.6 and 1.7 in the book by \cite{lee1990u} for detailed derivations of these facts. 
The simple but key observation following from equation \eqref{eq:a01} is that for any $j\in[m]$, $\var(h_m)\geq {m\choose j}\delta_j^2$, or
\ben{
\label{eq:a02}
\delta_j^2\leq \frac{\var(h_m)}{{m\choose j}}.
}
Therefore, 
\mln{
\label{eq:a12}
\var(U_{N,m} - S_{N,m}) = \sum_{j=2}^m \frac{{m\choose j}^2}{{N\choose j}}\delta_j^2 
\leq \var(h)\sum_{j=2}^m \frac{{m\choose j}}{{N\choose j}} \leq \var(h)\sum_{j\geq 2} \l( \frac{m}{N}\r)^j
\\
=\var(h)\l(\frac{m}{N}\r)^2 \l(1-m/N\r)^{-1},
}
where we used the fact that $\frac{{m\choose j}}{{N\choose j}}\leq \l( \frac m N\r)^j$ for $m\leq N$: indeed, the latter easily follows from the identity $\frac{{m\choose j}}{{N\choose j}} = \frac{m(m-1)\ldots(m-j+1)}{N(N-1)\ldots (N-j+1)}$. It is well known \citep{hoeffding1948class} that $\var \l(h^{(1)}(Y_1)\r) \leq \frac{\var(h_m)}{m}$, therefore the condition $\frac{\var\l( h_m(Y_1,\ldots,Y_m)\r)}{\var \l(h_m^{(1)}(Y_1)\r)} = o(N)$ imposed on the ratio of variances implies that $m=o(N)$. Therefore, for $m,N$ large enough (so that $m/N\leq 1/2$), 
\be{
\frac{\var(U_{N,m} - S_{N,m})}{\var(S_{N,m})} \leq 2 \frac{\var(h_m)\l(\frac{m}{N}\r)^2 }{\delta_1^2 m^2/N} 
= 2 \frac{\var(h_m)}{N\delta_1^2} = o(1)
}
by assumption, yielding that $\frac{U_{N,m} - S_N}{\var^{1/2}(S_N)} = o_P(1)$ as $N,m\to\infty$.

\subsection{Proof of Theorem \ref{th:concentration}.}
\label{proof:concentration1}

We are going to estimate $\mb E|V_{N,j}|^q$ for an arbitrary $q>2$. It follows from the symmetrization inequality (Fact \ref{fact:2}) followed by the moment bound stated in Fact \ref{fact:1} that 
\ml{
\mb E|V_{N,j}|^q \leq 2^{jq}\,\mb E_X \mb E_\eps \l| \frac{{m\choose j}^{1/2} }{{N\choose j}^{1/2}} \sum_{(i_1,\ldots,i_j)\in \m A_N^{(j)}} \eps_{i_1}\ldots\eps_{i_j}\h{j}_m(X_{i_1},\ldots, X_{i_j}) \r|^q
\\
\leq 2^{jq}(q-1)^{jq/2} \mb E \l| \frac{{m\choose j} }{{N\choose j}}\sum_{(i_1,\ldots,i_j)\in \m A_N^{(j)}} \l(\h{j}_m(X_{i_1},\ldots, X_{i_j})\r)^2 \r|^{q/2}.
}
Next, Hoeffding's representation of the U-statistic (Fact \ref{fact:3}) together with Jensen's inequality yields that 
\be{
\mb E \l| \frac{{m\choose j} }{{N\choose j}}\sum_{(i_1,\ldots,i_j)\in \m A_N^{(j)}} \l(\h{j}_m(X_{i_1},\ldots, X_{i_j})\r)^2 \r|^{q/2} 
\leq \mb E \l|\frac{{m\choose j}}{\lfloor N/j \rfloor} \sum_{i=1}^{\lfloor N/j \rfloor} W_i\r|^{q/2},
}
where $W_i:=\l(\h{j}_m(X_{(i-1)j+1},\ldots,X_{ij}) \r)^2$. 
We are going to estimate $\mb E \max_{j=1,\ldots,\lfloor N/j\rfloor} W_j^p$ in two different ways. First, recall that 
\be{
\h{j}_m(x_1,\ldots,x_j) := (\pi_j h_m)(x_1,\ldots,x_j) = (\delta_{x_1} - P_X)\times\ldots\times(\delta_{x_j}-P_X)\times P_X^{m-j} h_m.
}
Therefore, $(\pi_j h)(x_1,\ldots,x_j)$ is a linear combination of $2^j$ terms of the form $\prod_{i\in I}\delta_{x_i} \,P_X^{m - | I |} \,h_m$, for all choices of $I\subseteq [j]$. Consequently, $\l|(\pi_j h_m)(x_1,\ldots,x_j)\r|^2 \leq 2^{2j} \|h_m\|^2_\infty$, and the same bound also holds (almost surely) for the maximum of $W_j$'s. Therefore, $\mb E \max_{j=1,\ldots,\lfloor N/j\rfloor} W_j^p \leq 2^{2jp}\|h_m\|^{2p}_\infty$
and $\mb E \l( {m\choose j}W_1 \r)^p \leq (2e)^{2jp}\l( \frac{m}{j}\r)^{jp} \|h_m\|_\infty^{2p}$. 
Moreover, equation \eqref{eq:a02} in the proof of Theorem \ref{th:U-stat} implies that 
$\mb E W_1 \leq \frac{\var(h_m)}{{m\choose j}}$. Therefore, 
Rosenthal's inequality for nonnegative random variables (Fact \ref{fact:rosenthal}) entails that for $q\geq 2$,
\ml{
\mb E \l|\frac{{m\choose j}}{\lfloor N/j \rfloor} \sum_{i=1}^{\lfloor N/j \rfloor} W_i\r|^{q/2} 
\leq C^{q/2}\l( \var^{q/2}(h_m + \l( \frac{q}{2}\r)^{q/2} \l( \frac{j}{N}\r)^{q/2} \mb E \l( {m\choose j}\max_{j=1,\ldots,\lfloor N/j \rfloor}W_1 \r)^{q/2} \r)
\\
\leq C^{q/2}\l( \var^{q/2}(h_m) + \l( \frac{q}{2}\r)^{q/2} \l( \frac{j}{N}\r)^{q/2} (2e)^{jq}\l( \frac{m}{j}\r)^{jq/2} \|h_m\|_\infty^{q} \r)
}
and 
\be{
\mb E|V_{N,j}|^q \leq (Cq^{1/2})^{qj}\l( \var^{q/2}(h_m) \vee \l( \l( \frac{qj}{N}\r)^{1/2} \l( \frac{m}{j}\r)^{j/2} \|h_m\|_\infty \r)^q \r).
}
Markov's inequality therefore yields that 
\be{
\pr{|V_{N,j}|\geq (C_1 q)^{j/2}\l( \var^{1/2}(h_m)\vee  \l( \frac{qj}{N}\r)^{1/2} \l( \frac{m}{j}\r)^{j/2} \|h_m\|_\infty \r)} \leq e^{-q}.
}
Let $A(q) = (C_1 q)^{j/2}\var^{1/2}(h_m)$ and $B(q) = \|h_m\|_\infty\l( \frac{qj}{N}\r)^{1/2}\l( C_1 q^{1/2} \l( \frac{m}{j}\r)^{1/2} \r)^{j}$. If $t=A(q)\vee B(q)$, then $q = A^{-1}(t)\wedge B^{-1}(t)$. 
We can solve the inequalities explicitly to get, after some algebra, that 
\ben{
\label{eq:bound-f1}
\pr{|V_{N.j}|\geq t} \leq \exp{ \min\l( \frac{1}{c}\l(\frac{t^2}{\var(h_m)}\r)^{{\frac{1}{j}}}, \frac{\l( \frac{t}{\|h_m\|_\infty} \sqrt{\frac N j}\r)^{\frac{2}{j+1}}}{ \l( \frac{cm}{j} \r)^{\frac{j}{j+1}}} \r)}.
}
\begin{remark}
Whenever $|X_1 - \mb EX_1|\leq M$ almost surely, the inequality $\l|(\pi_j h_m)(x_1,\ldots,x_j)\r| \leq 2^{j} \|h_m\|_\infty$ can be replaced by the bound $\l| (\pi_j h_m)(x_1,\ldots,x_j)\r| \leq C \l\|\partial_{u_j}\ldots\partial_{u_1} f_j \r\|_\infty (2M)^j$ which follows from Lemma \ref{lemma:variance} below. Combined with the assumption stating that $\l\| \partial_{u_j}\ldots\partial_{u_1} f_j \r\|_\infty \leq \l(\frac{C_1(P)}{m}\r)^{j/2} j^{\gamma_1 j}$, one easily finds that the resulting concentration inequality reads as follows:
\ben{
\label{eq:bound-f1.1}
\pr{|V_{N.j}|\geq t} \leq \exp{ \min\l( \frac{1}{c}\l(\frac{t^2}{\var(h_m)}\r)^{{\frac{1}{j}}}, 
\l(  \frac{t\sqrt{N/j}}{\l(c M j^{\gamma_1-1/2}\r)^j} \r)^{\frac{2}{j+1}} \r)}.
}
This bound holds for all $t>0$ and is usually sharper than \eqref{eq:bound-f1}.
\end{remark}

The bound \eqref{eq:bound-f1} is mostly useful only when $\frac{m}{j}$ is not too large. Now we will present a second way to estimate $\mb E \max_{j=1,\ldots,\lfloor N/j\rfloor} W_j^p$ that will yield much better inequalities for small values of $j$ and is valid when $X_1$ is not necessarily supported on a bounded interval. The key technical element that we rely on is the following lemma that allows one to control the growth of moments of $W_1$ with respect to $m$. Define 
\[
f_j(x_1,\ldots,x_j):=\mb E h_m(x_1,\ldots,x_j,X_{j+1},\ldots,X_m).
\]
\begin{lemma}
\label{lemma:variance}
Let conditions of the theorem hold and let $\sigma^2=\var(X_1)$. 
Then there exists $C=C(P)>0$ such that 
\[
\l| (\pi_j h_m)(X_1,\ldots,X_j)\r| \leq C  \l\|\partial_{u_j}\ldots\partial_{u_1} f_j \r\|_\infty \prod_{i=1}^j \l( |X_i-\mb EX_i| + \sigma\r)
\]
with probability $1$. Moreover, for any $p>2$, 
\be{
\mb E \l| (\pi_j h_m)(X_1,\ldots,X_j)\r|^p \leq 
C^{pj}\, \l\|\partial_{u_j}\ldots\partial_{u_1} f_j \r\|^p_\infty \l(\mb E \l|X_1 - \mb EX_1\r|^p\r)^j.
}
\end{lemma}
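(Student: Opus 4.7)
The plan is to derive an explicit integral representation of $(\pi_j h_m)$ that converts the $j$-fold symmetric-difference operator acting on $f_j$ into a $j$-fold iterated integral of its mixed partial derivative. Concretely, I will show that if $\tilde X_1,\ldots,\tilde X_j$ are i.i.d.\ copies of $X_1$ independent of $(X_1,\ldots,X_j)$, then
\[
(\pi_j h_m)(x_1,\ldots,x_j) = \mathbb E \int_{\tilde X_1}^{x_1}\!\!\cdots\int_{\tilde X_j}^{x_j} \partial_{u_1}\cdots\partial_{u_j} f_j(u_1,\ldots,u_j)\, du_j\cdots du_1,
\]
where the expectation is over $(\tilde X_1,\ldots,\tilde X_j)$. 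This is the crux of the argument; once it is in hand the conclusion is immediate.

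To establish the identity I would argue by induction on $j$. For $j=1$ it is just the fundamental theorem of calculus applied under the expectation: $(\pi_1 h_m)(x_1)=f_1(x_1)-\mathbb E f_1(\tilde X_1)=\mathbb E\int_{\tilde X_1}^{x_1}\partial_{u_1} f_1(u_1)\,du_1$. The inductive step follows because, by Fubini, applying $(\delta_{x_j}-P)$ to the inner variable of the iterated integral (with limits currently fixed at the other $\tilde X_i$) produces a new FTC integral with limits $\tilde X_j$ and $x_j$; conditions needed to apply FTC and Fubini are supplied by the hypothesis $\|\partial_{u_1}\cdots\partial_{u_j}f_j\|_\infty<\infty$.

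Taking absolute values inside the expectation and bounding the integrand by $\|\partial_{u_1}\cdots\partial_{u_j}f_j\|_\infty$, and using that the $\tilde X_i$ are independent, yields
\[
|(\pi_j h_m)(x_1,\ldots,x_j)| \leq \|\partial_{u_1}\cdots\partial_{u_j}f_j\|_\infty \prod_{i=1}^j \mathbb E|x_i-\tilde X_i|.
\]
A triangle-inequality estimate gives $\mathbb E|x_i-\tilde X_i|\leq |x_i-\mathbb E X_i|+\mathbb E|\tilde X_i-\mathbb E X_i|\leq |x_i-\mathbb E X_i|+\sigma$ (the latter step by Jensen). Evaluating at $x_i=X_i$ delivers the claimed pointwise inequality with constant $C=1$ (so any $C\geq 1$ works).

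For the $L_p$ bound, raise the pointwise inequality to the power $p$ and take expectation. Independence of $X_1,\ldots,X_j$ factors the right-hand side into a product. It remains to estimate each factor: since $p\geq 2$, Jensen's inequality gives $\sigma^p=(\mathbb E|X_1-\mathbb E X_1|^2)^{p/2}\leq \mathbb E|X_1-\mathbb E X_1|^p$, and convexity yields $(|X_i-\mathbb E X_i|+\sigma)^p \leq 2^{p-1}(|X_i-\mathbb E X_i|^p+\sigma^p)\leq 2^p\mathbb E|X_1-\mathbb E X_1|^p$. Multiplying $j$ such factors produces the constant $C^{pj}$ with $C=2$, completing the proof. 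I do not expect a real obstacle beyond checking that the differentiability and integrability hypotheses implicit in the statement are enough to justify the inductive FTC/Fubini step.
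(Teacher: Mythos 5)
Your proof is correct and takes essentially the same approach as the paper's. Both arguments hinge on the same iterated-integral representation of $(\pi_j h_m)$ in terms of $\partial_{u_1}\cdots\partial_{u_j}f_j$ — your formulation with random lower limits $\tilde X_i$ under an outer expectation is just a streamlined rewriting of the paper's version, which first integrates from the deterministic point $a=\mathbb E X_1$ and then applies $\prod_i(\delta_{x_i}-\delta_{\tilde X_i})$ (the two coincide after distributing the differences through the inner integrals) — and both then bound the integrand by $\l\|\partial_{u_1}\cdots\partial_{u_j}f_j\r\|_\infty$ and control the product of interval lengths via independence and Jensen.
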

\noindent The proof of the lemma is outlined in section \ref{proof:variance}. As $\l\| \partial_{u_j}\ldots\partial_{u_1} f_j \r\|_\infty \leq \l(\frac{C_1(P)}{m}\r)^{j/2} j^{\gamma_1 j}$ by assumption, the second bound of the lemma can be written as 
\[
\mb E \l| (\pi_j h_m)(X_1,\ldots,X_j)\r|^p \leq 
C_2^{pj} m^{-jp/2} j^{\gamma_1 pj} \l(\mb E \l| X_1-\mb EX_1\r|^{p}\r)^j.
\]
Recall that $\nu_k = \mb E^{1/k} |X_1-\mb EX_1|^k$ and that under the stated assumptions, $\nu_k \leq k^{\gamma_2}M$ for all integers $k\geq 2$ and some $\gamma_2,M>0$. 
Therefore,
\ben{
\label{eq:moment01}
\mb E W_1^p \leq  C^{2pj} j^{2\gamma_1 p\,j} m^{-pj}\nu_{2p}^{2pj}
\leq \l( C' M j^{\gamma_1} m^{-1/2} p^{\gamma_2} \r)^{2pj},
}
and consequently $\mb E \l( {m\choose j}W_1 \r)^p \leq  \l( C' M j^{\gamma_1-1/2} p^{\gamma_2} \r)^{2pj}$. 
The rest of the argument proceeds in a similar way as before. Recall again that $\mb E W_1 \leq \frac{\var(h_m)}{{m\choose j}}$. Rosenthal's inequality for nonnegative random variables (Fact \ref{fact:rosenthal}) implies that for $q\geq 2$,
\be{
\mb E \l|\frac{{m\choose j}}{\lfloor N/j \rfloor} \sum_{i=1}^{\lfloor N/j \rfloor} W_i\r|^{q/2} 
\leq C^{q/2}\l( \var^{q/2}(h_m) + \l( \frac{q}{2}\r)^{q/2} \l( \frac{j}{N}\r)^{q/2} \mb E \l( {m\choose j}\max_{j=1,\ldots,\lfloor N/j \rfloor}W_1 \r)^{q/2} \r).
}
With the inequality for $\mb E W_1^p$ in hand, the expectation $\mb E \l( {m\choose j}\max_{j=1,\ldots,\lfloor N/j \rfloor}W_1 \r)^{q/2}$ can be upper bounded in two ways: first, trivially, 
\[
\mb E \l( {m\choose j}\max_{j=1,\ldots,\lfloor N/j \rfloor}W_1 \r)^{q/2}\leq \lfloor N/j \rfloor \mb E \l( {m\choose j} W_1 \r)^{q/2} \leq \lfloor N/j \rfloor  \l( C_1M j^{\gamma_1-1/2} q^{\gamma_2} \r)^{qj}. 
\]
On the other hand, for any identically distributed $\xi_1,\ldots,\xi_k$ and any $p>1$, $\mb E\max_{j=1,\ldots,k}|\xi_j| \leq k^{1/p} \max_{j=1,\ldots,k}\mb E^{1/p}|\xi_j|^p$. Choosing $\xi_j =  {m\choose j} W_j$ and $p = \lfloor \log(N/j) \rfloor +1$, we obtain the inequality 
\[
\mb E \l( {m\choose j}\max_{j=1,\ldots,\lfloor N/j \rfloor}W_1 \r)^{q/2}\leq 
\l( \log(N/j)\r)^{\gamma_2 qj} \l( C_1M j^{\gamma_1-1/2} q^{\gamma_2} \r)^{qj}.
\]
The second bound is better for $q\leq \frac{\log(N/j)}{\gamma_2 j\log\log(N/j)}$, therefore we get an estimate
\be{
\mb E \l|\frac{{m\choose j}}{\lfloor N/j \rfloor} \sum_{i=1}^{\lfloor N/j \rfloor} W_i\r|^{q/2} 
\leq C^{q/2}\l( \var^{q/2}(h_m) + \l( C_3^j\l( \frac{qj}{N}\r)^{1/2} 
\l(\log^{\gamma_2 }(N/j) Mj^{\gamma_1-1/2} q^{\gamma_2}\r)^j \r)^q \r)
}
and 
\be{
\mb E|V_{N,j}|^q \leq (Cq^{1/2})^{qj}\l( \var^{q/2}(h_m) \vee \l( \l( \frac{qj}{N}\r)^{1/2} 
\l(\log^{\gamma_2 }(N/j) Mj^{\gamma_1-1/2} q^{\gamma_2}\r)^j \r)^q \r)
}
that we will use for $2\leq q\leq \frac{\log(N/j)}{\gamma_2 j}$, while for larger values of $q$, $(N/j)^{1/q}\leq e^{\gamma_2 j}$ and 
\be{
\mb E|V_{N,j}|^q
\leq (Cq^{1/2})^{qj}\l( \var^{q/2}(h_m) \vee \l( \l( \frac{qj}{N}\r)^{1/2} \l(Mj^{\gamma_1-1/2} q^{\gamma_2}\r)^j \r)^q \r).
}
Markov's inequality therefore yields that for small values of $q$ (that is, whenever $2\leq q \leq \frac{\log(N/j)}{\gamma_2 j}$),
\be{
\pr{|V_{N,j}|\geq (C q)^{j/2}\l( \var^{1/2}(h_m)\vee\l( \frac{qj}{N}\r)^{1/2}\l( \log^{\gamma_2 }(N/j) M j^{\gamma_1-1/2} q^{\gamma_2} \r)^{j} \r)} \leq e^{-q}.
}
Let $A(q) = (C q)^{j/2}\var^{1/2}(h_m)$ and $B(q) = \l( \frac{qj}{N}\r)^{1/2}\l( Cq^{1/2}\log^{\gamma_2 }(N/j) M j^{\gamma_1-1/2} q^{\gamma_2} \r)^{j}$. If $t=A(q)\vee B(q)$, then $q = A^{-1}(t)\wedge B^{-1}(t)$. 
Solving these inequalities explicitly to get, after some algebra, that 
\be{
\pr{|V_{N,j}|\geq t}\leq \exp{ \min\l( \frac{1}{c}\l(\frac{t^2}{\var(h_m)}\r)^{{\frac{1}{j}}}, \l( \frac{t\sqrt{N/j}}{\l(c \log^{\gamma_2}(N/j) M j^{\gamma_1-1/2}\r)^j}\r) \r)^{\frac{2}{1+j(2\gamma_2+1)}} }
}
for values of $t$ satisfying $2\leq \min\l( \frac{1}{c}\l(\frac{t^2}{\var(h_m)}\r)^{{\frac{1}{j}}}, \l( \frac{t\sqrt{N/j}}{\l(c \log^{\gamma_2}(N/j) M j^{\gamma_1-1/2}\r)^j}\r)^{\frac{2}{1+j(2\gamma_2+1)}} \r) \leq \frac{\log(N/j)}{\gamma_2 j}$. 
Similarly, for $q\geq \max\l(2, \frac{\log(N/j)}{\gamma_2 j}\r)$, the previously established bounds yield that 
\be{
\pr{|V_{N,j}|\geq (Cq)^{j/2}\l( \var^{1/q}(h_m) \vee \l( \frac{qj}{N}\r)^{1/2} \l(Mj^{\gamma_1-1/2} q^{\gamma_2}\r)^j  \r)} \leq e^{-q},
}
or equivalently 
\ben{
\label{eq:bound-f2}
\pr{|V_{N,j}|\geq t}
\leq \exp{ \min\l( \frac{1}{c}\l(\frac{t^2}{\var(h_m)}\r)^{{\frac{1}{j}}}, \l( \frac{t\sqrt{N/j}}{\l(c M j^{\gamma_1-1/2}\r)^j}\r) \r)^{\frac{2}{1+j(2\gamma_2+1)}} }
}
whenever $ \min\l( \frac{1}{c}\l(\frac{t^2}{\var(h_m)}\r)^{{\frac{1}{j}}}, \l( \frac{t\sqrt{N/j}}{\l(c M j^{\gamma_1-1/2}\r)^j}\r) \r)^{\frac{2}{1+j(2\gamma_2+1)}}  \geq  \max\l(2, \frac{\log(N/j)}{\gamma_2 j}\r)$. 
Combination of inequalities \eqref{eq:bound-f1} and \eqref{eq:bound-f2} yields the final result.

\subsection{Proof of Lemma \ref{lemma:variance}.}
\label{proof:variance}

Recall that $f_j(x_1,\ldots,x_j)=\mb E h_m\l(\frac{x_1}{\sqrt m},\ldots,\frac{x_j}{\sqrt m},\frac{X_{j+1}}{\sqrt m},\ldots,\frac{X_m}{\sqrt m}\r)$
where $j<m$. It is easy to see from the definition of $\pi_j$ that $(\pi_j h)(x_1,\ldots,x_j) = (\pi_j f_j)(x_1,\ldots,x_j)$. 
Next, observe that for any function $g:\mb R^{j-1}\mapsto \mb R$ of $j-1$ variables such that $\mb Eg^2(X_1,\ldots,X_{j-1})<\infty,$ 
$\pi_j g = 0$ $P^{j-1}$-almost everywhere. Indeed, this follows immediately from the definition \eqref{eq:pi} of the operator $\pi_j$ since $g$ is a constant when viewed as a function of $y_j$.
Based on this fact, it is easy to see that for any constant $a\in \mb R$, $f_j(x_1,\ldots,x_j)$ and $f_j(x_1,\ldots,x_j) - f_j|_{x_1=a}(x_2,\ldots,x_j)$, where $f_j|_{x_1=a}(x_2,\ldots,x_j) := f_j(a,x_2,\ldots,x_j)$, are mapped to the same function by $\pi_j$. In particular, $(\pi_j h)(x_1,\ldots,x_j) = \l(\pi_j (f_j - f_j|_{x_1=a})\r)(x_1,\ldots,x_j)$. 
Moreover, 
\be{
f_j(x_1,\ldots,x_j) - f_j|_{x_1=a}(x_2,\ldots,x_j) = \int_a^{x_1} \partial_{u_1} f_j(u_1,x_2,\ldots,x_j) du_1
}
Next, we repeat the same argument with $f_j$ replaced by 
\[
f_{j,2}(x_2,\ldots,x_j; u_1):=\partial_{u_1} f_j(u_1,x_2,\ldots,x_j) 
\] 
and noting that 
\be{
f_{j,2}(x_2,\ldots,x_j;u_1) - f_{2,j}|_{x_2 = a}(x_3,\ldots,x_j;u_1)
= \int_{a}^{x_2} \partial_{u_2} f_{j,2}(u_2,x_3,\ldots,x_j;u_1) du_2.
}
The expression $\int_a^{x_1} f_{j,2} |_{x_2=a}(x_3,\ldots,x_j;u_1) du_1$ is a function of $j-1$ variables, hence $\pi_j$ maps it to $0$ so that 
\be{
(\pi_j h_m)(x_1,\ldots,x_j) 
= \pi_j \l( \int_a^{x_1}\int_a^{x_2} \partial_{u_2} f_{j,2}(u_2,x_3,\ldots,x_j;u_1) du_2 du_1 \r). 
}
Iterating this process, we arrive at the expression 
\ben{
\label{eq:f03}
(\pi_j h_m)(x_1,\ldots,x_j) 
= \pi_j \l( \int_a^{x_1}\ldots\int_a^{x_j} \partial_{u_j}\ldots\partial_{u_1} f_j(u_1,\ldots,u_j) du_j\ldots du_1\r). 
}	
\noindent Next, observe that for any function $g$ of $j$ variables,
\be{
(\pi_j g)(x_1,\ldots,x_j)=(\delta_{x_1} - P_X)\times\ldots\times(\delta_{x_j}-P_X) g 
= \mb E_{\tilde X} \l[ (\delta_{x_1} - \delta_{\tilde X_1})\times\ldots\times(\delta_{x_j} - \delta_{\tilde X_j})g\r], 
}
where $\tilde X_1,\ldots,\tilde X_j$ are i.i.d. with the same law as $X$, and independent from $X_1,\ldots,X_N$. Therefore, 
$(\pi_j h_m)(x_1,\ldots,x_j)$ is a linear combination of $2^j$ terms of the form $\mb E_{\tilde X}\l(\prod_{i\in I}\delta_{x_i}\prod_{j\in I^c}\delta_{\tilde X_j} \,g \r)$, for all choices of $I\subseteq [j]$ and
\[
g(x_1,\ldots,x_j) = \int_a^{x_1}\ldots\int_a^{x_j} \partial_{u_j}\ldots\partial_{u_1} f_j(u_1,\ldots,u_j) du_j\ldots du_1.
\] 
Take $a:=\mb EX_1$, and note that  
\ml{
\l|(\pi_j h_m)(x_1,\ldots,x_j)\r| \leq \l\| \partial_{u_j}\ldots\partial_{u_1} f_j \r\|_\infty\sum_{I\subseteq [j]}  \prod_{i\in I}|x_i-a| \prod_{j\in I^c} \mb E|\tilde X_i - a|
\\
\leq \l\| \partial_{u_j}\ldots\partial_{u_1} f_j \r\|_\infty \sum_{I\subseteq [j]}  \prod_{i\in I}|x_i-a| \cdot \sigma^{|I^c|} 
=  \l\| \partial_{u_j}\ldots\partial_{u_1} f_j \r\|_\infty \prod_{i=1}^j \l( |x_i-\mb EX_1| +\sigma\r).
}
The first claim of the lemma follows. To deduce the moment bound, observe that since $X_1,\ldots,X_j, \tilde X_1,\ldots,\tilde X_j$ are i.i.d. and in view of convexity of the function $x\mapsto |x|^p$ for $p\geq 1$,
\ml{
\mb E \l| (\pi_j h_m)(X_1,\ldots,X_j)\r|^p \leq 2^{(p-1)j} 
\mb E \l|\int_a^{X_1}\ldots\int_a^{X_j} \partial_{u_j}\ldots\partial_{u_1} f_j(u_1,\ldots,u_j)  \,du_j\ldots du_1\r|^p 
\\
\leq 2^{(p-1)j}  \l\| \partial_{u_j}\ldots\partial_{u_1} f_j \r\|^p_\infty \mb E \l| (X_1-\mb EX_1)\ldots (X_j-\mb EX_j)\r|^p.
}
for $a=\mb EX_1$.

\subsection{Proof of Lemma \ref{lemma:truncation}.}

As $\psi(x)$ is integrable, its Fourier transform equals $C_2 \wh\phi_1 \ast \wh\chi_R$, while $\wh\chi_R = \wh \kappa_{R}\cdot \wh I_{2R}$. It is well known \citep[e.g.][]{johnson2015saddle} that $\wh\kappa(x)\leq C_3 e^{-\sqrt{|x|}}$, hence $\wh\kappa_R(x) = \wh\kappa(Rx)\leq C_3 e^{-\sqrt{R|x|}}$. Moreover, $\wh I_{2R}(x) = \frac{\sin(2Rx)}{x}$. Therefore, for $|x|$ large enough, 
\ml{
\l| \wh\psi(x)\r| = C_2\l| \int_\mb R \wh \phi_1(x-y) \wh\chi_R(y)dy \r| 
\\
= C_2\l( \int\limits_{y: |y-x|\geq |x|/2} \wh \phi_1(x-y) \wh\chi_R(y)dy +  \int\limits_{y: |y-x| < |x|/2} \wh \phi_1(x-y) \wh\chi_R(y)dy \r).
}
To estimate the first integral, note that $\wh \phi_1(x-y) \leq \frac{C_1}{(1 + |x|/2)^\delta}\leq \frac{C_1 2^\delta}{(1+|x|)^\delta}$ whenever $|y-x|\geq |x|/2$ and that $\wh I_{2R}(x)\leq 2R$, implying that 
\[
\Bigg|\int\limits_{y: |y-x|\geq |x|/2} \wh \phi_1(x-y) \wh\chi_R(y)dy\Bigg| \leq \frac{C_4}{(1+|x|)^\delta} \int_\mb R  e^{-\sqrt{R|x|}} d(Rx) = \frac{C_5}{(1+|x|)^\delta}.
\]
On the other hand, 
\ml{
\Bigg|\int\limits_{y: |y-x| < |x|/2} \wh \phi_1(x-y) \wh\chi_R(y)dy\Bigg| \leq C_6 \l| \int_{x-|x|/2}^{x+|x|/2} e^{-\sqrt{R|x|}} \frac{\sin(2Rx)}{x} dx\r| 
\\
\leq C_7  \int_{R|x|/2}^{3R|x|/2} e^{-\sqrt{z}} dz\leq C_8 e^{-\sqrt{R|x|/2}}\sqrt{R|x|}.
}
Clearly, the last expression is smaller than $\frac{C_9}{(1+|x|)^\delta}$, implying the desired result.

\subsection{Proof of Lemma \ref{lemma:deriv-bound}.}
\label{proof:deriv-bound}

The proof proceeds using the standard Fourier-analytic tools. Let $\wh \phi_1:=\m F[\phi_1]$ be the Fourier transform of $\phi_1$, whence $\m F\l[\phi_{m-j}\r](t) = \l( \wh \phi_1\l(\frac{t}{\sqrt{m-j}}\r)\r)^{m-j}$. Therefore, 
\be{
\phi_{m-j}^{(j-1)}(t) = \frac{1}{2\pi}\int_\mb R \exp{-itx} (ix)^{j-1}\l( \wh \phi_1\l(\frac{x}{\sqrt{m-j}}\r)\r)^{m-j}dx
}
and 
$\l\| \phi_{m-j}^{(j-1)} \r\|_\infty \leq \frac{1}{2\pi} \int_\mb R |x|^{j-1} \l| \wh \phi_1\l(\frac{x}{\sqrt{m-j}}\r)\r|^{m-j}dx 
= \frac{(m-j)^{j/2}}{2\pi}\int_\mb R |x|^{j-1} \l| \wh\phi_1(x)\r|^{m-j} dx$. 
As $\l| \wh\phi_1(x)\r| \leq \frac{C_1}{(1+|x|)^\delta}$ by assumption, the integral is finite when $\delta(m-j)>j$ (in particular, this inequality holds when $m$ is large enough and $j=o(m)$ as $m\to\infty$). 
To get an explicit bound, we will estimate the integral over $[-\eta,\eta]$ and $\mb R \setminus [-\eta,\eta]$ separately, for a specific choice of $\eta>0$. To this end, observe that $\wh\phi_1(x) = \psi_\sigma(x)+o(x^2)$ where $\psi_\sigma(x)=\exp{-\frac{\sigma^2 x^2}{2}}$ is the characteristic function of the normal law $N(0,\sigma^2)$. Therefore, there exists $\eta>0$ such that for all $|x|\leq \eta$, $\l| \wh\phi_1(x)\r| \leq \exp{-\frac{\sigma^2 x^2}{4}}$, and 
\ml{
(m-j)^{j/2}\int_{-\eta}^\eta  |x|^{j-1} \l| \wh\phi_1(x)\r|^{m-j} dx
\leq 
(m-j)^{j/2}\int_{\mb R} |x|^{j-1} \exp{-\frac{\sigma^2 x^2 (m-j)}{4}} dx 
\\
= \int_\mb R |y|^{j-1} \exp{-\frac{\sigma^2 y^2}{4}}dy = \frac{2^{j}}{\sigma^{j}} \Gamma\l( \frac{j}{2}\r)
}     
where we used the exact expression for the absolute moments of the normal distribution. As 
$\Gamma(x+1)\leq C_2 \sqrt{2\pi x}\l(\frac{x}{e} \r)^{x}$ for all $x\geq 1$ and an absolute constant $C_2$ large enough, $ \frac{2^{j}}{\sigma^{j}} \Gamma\l( \frac{j}{2}\r) \leq \frac{C_2}{\sigma^{j}} \l(\frac{2j}{e}\r)^{j/2}$. At the same time,
\ml{
(m-j)^{j/2}\int_{\mb R\setminus [-\eta,\eta]}  |x|^{j-1} \l| \wh\phi_1(x)\r|^{m-j} dx 
= (m-j)^{j/2}\int_{\mb R\setminus [-(2C_1)^{2/\delta},(2C_1)^{2/\delta}]}  |x|^{j-1} \l| \wh\phi_1(x)\r|^{m-j} dx 
\\
+ (m-j)^{j/2}\int_{[-(2C_1)^{2/\delta},(2C_1)^{2/\delta}]\setminus [-\eta,\eta]}  |x|^{j-1} \l| \wh\phi_1(x)\r|^{m-j} dx 
}
where $C_1\geq 1$ is a constant such that $\l| \wh\phi_1(x)\r| \leq \frac{C_1}{(1+|x|)^\delta}$. The first term can be estimated via
\ml{
(m-j)^{j/2}\int_{\mb R\setminus [-(2C_1)^{2/\delta},(2C_1)^{2/\delta}]}  |x|^{j-1} \l| \wh\phi_1(x)\r|^{m-j} dx 
\\
\leq 
C_1^{m-j} (m-j)^{j/2} \int_{\mb R\setminus [-(2C_1)^{2/\delta},(2C_1)^{2/\delta}]}  \frac{|x|^{j-1}}{(1+|x|)^{\delta(m-j)}} dx
\\
\leq \frac{2 C_1^{m-j} (m-j)^{j/2}}{\delta (m - j) - j } \frac{1}{(2C_1)^{2(m - j) - 2j/\delta }}.
}
Whenever $m > 2j+2j/\delta $, we can bound the last expression from above by 
$C_3 m^{j/2} 2^{-m}$. Finally, as $\sup_{|x|>\eta}| \wh \phi_1(x) | \leq 1-\gamma$ for some $0<\gamma<1$,
\be{
(m-j)^{j/2}\int_{[-(2C_1)^{2/\delta},(2C_1)^{2/\delta}]\setminus [-\eta,\eta]}  |x|^{j-1} \l| \wh\phi_1(x)\r|^{m-j} dx 
\leq 2(m-j)^{j/2} (1-\gamma)^{m-j} \frac{(2C_1)^{2j/\delta}}{j}.
}
Putting the estimates together, we deduce that 
\ml{
\l\| \phi_{m-j}^{(j-1)} \r\|_\infty \leq \frac{(m-j)^{j/2}}{2\pi}\int_{\mb R}  |x|^{j-1} \l| \wh\phi_1(x)\r|^{m-j} dx 
\\
\leq \frac{C_2}{\sigma^{j}} \l(\frac{2j}{e}\r)^{j/2} + C_3 m^{j/2} 2^{-m} + C_4\l( (2C_1)^{4/\delta}m \r)^{j/2} (1-\gamma)^{m-j}.
}
Whenever $j = o(m/\log m)$, the last two terms in the sum above are negligible so that for $m$ large enough, 
\be{
\l\| \phi_{m-j}^{(j-1)} \r\|_\infty \leq  \frac{C_5}{\sigma^{j}} \l(\frac{2j}{e}\r)^{j/2},
}
as claimed.

\bibliography{MoM,robustERM} 

\begin{thebibliography}{}

\bibitem[Alon et~al., 1996]{alon1996space}
Alon, N., Matias, Y., and Szegedy, M. (1996).
\newblock The space complexity of approximating the frequency moments.
\newblock In {\em Proceedings of the twenty-eighth annual ACM symposium on
  Theory of computing}, pages 20--29. ACM.

\bibitem[Arcones, 1995]{MA-ustat}
Arcones, M.~A. (1995).
\newblock {A Bernstein-type inequality for U-statistics and U-processes}.
\newblock {\em Statistics\&Probability Letters}, 22(3):239--247.

\bibitem[Boucheron et~al., 2013]{boucheron2013concentration}
Boucheron, S., Lugosi, G., and Massart, P. (2013).
\newblock {\em Concentration inequalities: A nonasymptotic theory of
  independence}.
\newblock Oxford university press.

\bibitem[Catoni, 2012]{catoni2012challenging}
Catoni, O. (2012).
\newblock Challenging the empirical mean and empirical variance: a deviation
  study.
\newblock In {\em Annales de l'Institut Henri Poincar{\'e}, Probabilit{\'e}s et
  Statistiques}, volume~48, pages 1148--1185. Institut Henri Poincar{\'e}.

\bibitem[Chen et~al., 2012]{chen2012masked}
Chen, R.~Y., Gittens, A., and Tropp, J.~A. (2012).
\newblock The masked sample covariance estimator: an analysis using matrix
  concentration inequalities.
\newblock {\em Information and Inference}, page ias001.

\bibitem[de~la Pena and Gine, 1999]{Decoupling}
de~la Pena, V. and Gine, E. (1999).
\newblock {\em Decoupling: From dependence to independence}.
\newblock Springer-Verlag, New York.

\bibitem[de~la Pena and Montgomery-Smith, 1995]{PM-decoupling-1995}
de~la Pena, V. and Montgomery-Smith, S.~J. (1995).
\newblock {Decoupling inequalities for the tail probabilities of multivariate
  U-Statistics}.
\newblock {\em Annals of Probability}, 23(2):806--816.

\bibitem[Devroye et~al., 2016]{devroye2016sub}
Devroye, L., Lerasle, M., Lugosi, G., and Oliveira, R.~I. (2016).
\newblock {Sub-Gaussian} mean estimators.
\newblock {\em The Annals of Statistics}, 44(6):2695--2725.

\bibitem[DiCiccio and Romano, 2022]{diciccio2022clt}
DiCiccio, C. and Romano, J. (2022).
\newblock {CLT} for {U}-statistics with growing dimension.
\newblock {\em Statistica Sinica}, 32:1--22.

\bibitem[Feller, 1968]{feller1968berry}
Feller, W. (1968).
\newblock On the {B}erry-{E}sseen theorem.
\newblock {\em Zeitschrift f{\"u}r Wahrscheinlichkeitstheorie und Verwandte
  Gebiete}, 10(3):261--268.

\bibitem[Frees, 1989]{frees1989infinite}
Frees, E.~W. (1989).
\newblock Infinite order {U}-statistics.
\newblock {\em Scandinavian Journal of Statistics}, pages 29--45.

\bibitem[Hanson and Wright, 1971]{hanson1971bound}
Hanson, D.~L. and Wright, F.~T. (1971).
\newblock A bound on tail probabilities for quadratic forms in independent
  random variables.
\newblock {\em The Annals of Mathematical Statistics}, 42(3):1079--1083.

\bibitem[Hodges and Lehmann, 1963]{hodges1963estimates}
Hodges, J.~L. and Lehmann, E.~L. (1963).
\newblock Estimates of location based on rank tests.
\newblock {\em The annals of mathematical statistics}, pages 598--611.

\bibitem[Hoeffding, 1948]{hoeffding1948class}
Hoeffding, W. (1948).
\newblock A class of statistics with asymptotically normal distribution.
\newblock {\em The Annals of Mathematical Statistics}, pages 293--325.

\bibitem[Hoeffding, 1963]{hoeffding1963probability}
Hoeffding, W. (1963).
\newblock Probability inequalities for sums of bounded random variables.
\newblock {\em Journal of the American statistical association},
  58(301):13--30.

\bibitem[Jerrum et~al., 1986]{jerrum1986random}
Jerrum, M.~R., Valiant, L.~G., and Vazirani, V.~V. (1986).
\newblock Random generation of combinatorial structures from a uniform
  distribution.
\newblock {\em Theoretical computer science}, 43:169--188.

\bibitem[Johnson, 2015]{johnson2015saddle}
Johnson, S.~G. (2015).
\newblock Saddle-point integration of ${C}_\infty$ ``bump'' functions.
\newblock {\em arXiv preprint arXiv:1508.04376}.

\bibitem[Lee, 2019]{lee1990u}
Lee, A.~J. (2019).
\newblock {\em U-statistics: Theory and Practice}.
\newblock Routledge.

\bibitem[Lee and Valiant, 2020]{lee2020optimal}
Lee, J.~C. and Valiant, P. (2020).
\newblock Optimal sub-{Gaussian} mean estimation in {$\mathbf{R}$}.
\newblock {\em arXiv preprint arXiv:2011.08384}.

\bibitem[Lee and Valiant, 2022]{lee2022optimal}
Lee, J.~C. and Valiant, P. (2022).
\newblock Optimal sub-{G}aussian mean estimation in very high dimensions.
\newblock In {\em 13th Innovations in Theoretical Computer Science Conference
  (ITCS 2022)}. Schloss Dagstuhl-Leibniz-Zentrum f{\"u}r Informatik.

\bibitem[Lerasle and Oliveira, 2011]{lerasle2011robust}
Lerasle, M. and Oliveira, R.~I. (2011).
\newblock Robust empirical mean estimators.
\newblock {\em arXiv preprint arXiv:1112.3914}.

\bibitem[Lugosi and Mendelson, 2019]{lugosi2019mean}
Lugosi, G. and Mendelson, S. (2019).
\newblock Mean estimation and regression under heavy-tailed distributions: A
  survey.
\newblock {\em Foundations of Computational Mathematics}, 19(5):1145--1190.

\bibitem[Maurer, 2019]{maurer2019bernstein}
Maurer, A. (2019).
\newblock A {B}ernstein-type inequality for functions of bounded interaction.
\newblock {\em Bernoulli}, 25(2):1451--1471.

\bibitem[Minsker, 2019]{minsker2017distributed}
Minsker, S. (2019).
\newblock Distributed statistical estimation and rates of convergence in normal
  approximation.
\newblock {\em Electronic Journal of Statistics}, 13(2):5213--5252.

\bibitem[Minsker and Ndaoud, 2021]{minsker2021robust}
Minsker, S. and Ndaoud, M. (2021).
\newblock Robust and efficient mean estimation: an approach based on the
  properties of self-normalized sums.
\newblock {\em Electronic Journal of Statistics}, 15(2):6036--6070.

\bibitem[Nemirovski and Yudin, 1983]{Nemirovski1983Problem-complex00}
Nemirovski, A. and Yudin, D. (1983).
\newblock {\em Problem complexity and method efficiency in optimization}.
\newblock John Wiley \& Sons Inc.

\bibitem[Peng et~al., 2022]{peng2022rates}
Peng, W., Coleman, T., and Mentch, L. (2022).
\newblock Rates of convergence for random forests via generalized
  {U}-statistics.
\newblock {\em Electronic Journal of Statistics}, 16(1):232--292.

\bibitem[Petrov, 1975]{Petrov_1975}
Petrov, V.~V. (1975).
\newblock {\em Sums of Independent Random Variables}.
\newblock Springer Berlin Heidelberg.

\bibitem[Petrov, 1995]{petrov1995limit}
Petrov, V.~V. (1995).
\newblock {\em Limit theorems of probability theory: sequences of independent
  random variables}.
\newblock Oxford, New York.

\bibitem[Serfling, 1984]{serfling1984generalized}
Serfling, R.~J. (1984).
\newblock Generalized {L}-, {M}-, and {R}-statistics.
\newblock {\em The Annals of Statistics}, pages 76--86.

\bibitem[Serfling, 2009]{serfling2009approximation}
Serfling, R.~J. (2009).
\newblock {\em Approximation theorems of mathematical statistics}, volume 162.
\newblock John Wiley \& Sons.

\bibitem[Shepp, 1964]{shepp1964local}
Shepp, L.~A. (1964).
\newblock A local limit theorem.
\newblock {\em The Annals of Mathematical Statistics}, 35(1):419--423.

\bibitem[Sherman, 1994]{sherman1994maximal}
Sherman, R.~P. (1994).
\newblock Maximal inequalities for degenerate $u$-processes with applications
  to optimization estimators.
\newblock {\em The Annals of Statistics}, pages 439--459.

\bibitem[Song et~al., 2019]{song2019approximating}
Song, Y., Chen, X., and Kato, K. (2019).
\newblock Approximating high-dimensional infinite-order {U}-statistics:
  Statistical and computational guarantees.
\newblock {\em Electronic Journal of Statistics}, 13(2):4794--4848.

\end{thebibliography}
\bibliographystyle{apalike}

\end{document}